\setlist[enumerate]{itemsep=0pt}
\newenvironment{mythm}[1]
  {\innercustomthm}
  {\endinnercustomthm}
\setlist[itemize]{itemsep=0pt}
\newcommand\restr[2]{{% we make the whole thing an ordinary symbol
  \left.\kern-\nulldelimiterspace % automatically resize the bar with \right
  #1 % the function
  \right |_{#2} % this is the delimiter
  }}
\newtheorem{theorem}{Theorem}[section]
\newtheorem{corollary}{Corollary}[theorem]
\newtheorem{lemma}[theorem]{Lemma}
\newtheorem{proposition}[theorem]{Proposition}
\newtheorem*{claim}{Claim}
\newtheorem{thmx}{Theorem}
\theoremstyle{definition}
\newtheorem{definition}[theorem]{Definition}
\newtheorem*{remark}{Remark(s)}
\begin{document}

\title{Versal deformation of transversely holomorphic flows on the boundary of strongly convex domains of $\mathbb C^n$}
\author{Mounib Abouanass}
\date{\today}

\maketitle
\begin{abstract}
    In this article, we give a versal deformation for any transversely holomorphic foliation $\mathcal{F}_0$ given by the intersection of the orbits of a holomorphic vector field $\xi$ defined on a neighborhood of the closure of a bounded strongly convex open domain $\Omega \subset\mathbb C^n$ ($n\geq2$) with smooth boundary, with its boundary $\partial \Omega$.

    That is, any germ of deformation of $\mathcal{F}_0$ is also obtained by intersecting the orbits of a deformation of $\xi$ with the boundary of $\Omega$.
\end{abstract}
\tableofcontents
\section{Introduction}
Smooth flows have long been of interest to both mathematicians and physicists. Their study can be approached either dynamically (by considering the one-parameter subgroup of smooth diffeomorphisms and using various tools from dynamical systems and ergodic theory) or geometrically (by examining the partition of the phase space into orbits, i.e., the orbit foliation). A classical approach involves assuming transverse structures for the flow.

For example, Brunella and Ghys (see \cite{brunella_umbilical_1995}, \cite{brunella_transversely_1996}, \cite{ghys_transversely_1996}) have studied transversely holomorphic flows on smooth three-manifolds, i.e., flows whose holonomy pseudo-group consists of biholomorphic maps between open subsets of $\mathbb C$. They achieved a complete classification using advanced topological and analytical techniques. Among the examples they studied are Poincaré foliations on $S^3$, which are those induced by the singularity of a holomorphic vector field in $\mathbb C^2$ within the Poincaré domain, along with their finite quotients. In fact, such examples exhaust all transversely holomorphic flows on $S^3$.

The aim of this paper is to study, in a similar spirit, transversely holomorphic foliations arising from the intersection of the orbits of a holomorphic vector field, defined in a neighborhood of the closure of a bounded strongly convex domain $\Omega \subset\mathbb C^n$ with smooth boundary (where $n \geq 2$), with its boundary $\partial M$.
More precisely, we study deformations of such transversely holomorphic foliations as defined by Haefliger, Girbau, and Sundararaman in \cite{haefliger_deformations_1983}, as a continuation of the work of Kodaira and Spencer on the deformation of complex and, more generally, pseudo-group structures (see, for example, \cite{kodaira_deformations_1958} and \cite{kodaira_multifoliate_1961}).

We rely on the work of Brunella (see \cite{brunella_remarque_1994}) which allows us to simplify the situation and consider simply the example of the closed unit ball $\overline{B^n}$, and the results of Ito (see \cite{ito_poincare-bendixson_1994}, \cite{ito_number_1996}) regarding  the intersection of such a holomorphic vector field with the unit sphere $S^{2n-1}=\partial\overline{B^n}$. Furthermore, we follow Haefliger’s proof (see \cite{haefliger_deformations_1985}) on the deformation of a transversely holomorphic flow obtained by intersecting the sphere $S^{2n-1}$ with the orbits of a holomorphic flow that has $0$ as a contracting fixed point.

The main result is :

\begin{thmx}
\label{thm:defo}
    Let $\mathcal{F}_0$ a transversely holomorphic foliation on the boundary $\partial \Omega$ of a bounded strongly convex domain $\Omega \subset\mathbb C^n$ with smooth boundary, obtained by intersecting with $\partial \Omega$ the orbit foliation $\mathcal{F}$ of a holomorphic vector field $\xi$ defined on a neighborhood of $\overline{\Omega}$.\\
    Then there exists a holomorphic diffeomorphism $\Phi$ from a neighborhood of $\overline{\Omega}$ to a neighborhood of the unit closed ball $\overline{B^n}$ such that if we note $\lambda=(\lambda_1, \ldots, \lambda_n)$ the eigenvalues of the differential at $0$ of $\Phi_*\xi$, and if $S$ is a sufficiently small open neighborhood of $0$ in the space $\mathcal{g}_\lambda$ - of holomorphic vector fields on $\mathbb C^n$ commuting with $\sum_{j=1}^n\lambda_jz_j \frac{\partial}{\partial z_j}$ - complementary to the subspace in $\mathcal{g}_\lambda$ generated by $\Phi_*\xi$ and $L_{\Phi_*\xi}(\mathcal{g}_\lambda)$, then the family $\mathcal{F}_0^S=\left(\mathcal{F}_0(\xi+\Phi^*s) \right)_{s \in S}$ of transversely holomorphic foliations on $\partial \Omega$ coming from the intersections with the boundary $\partial M$ of the holomorphic foliations $(\mathcal{F}(\xi +\Phi^*s))_{s\in S}$ induced by the family of holomorphic vector fields $(\xi +\Phi^*s)_{s\in S}$ is a versal deformation of the transversely holomorphic foliation $\mathcal{F}_0$ parametrized by $(S,0)$. \\
    The latter means that for any germ $\mathcal{F}_0^{S'}$ of deformation of $\mathcal{F}_0$ parametrized by the germ of an analytic space $(S', 0)$, there exists an analytic map $\varphi: (S', 0) \to(S, 0)$ so that $\mathcal{F}_0^{S'}$ is isomorphic to $(\mathcal{F}_0^{\varphi(s')})_{s \in S}$.
\end{thmx}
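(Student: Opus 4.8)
The plan is to reduce to the ball, then apply Haefliger's deformation-theoretic machinery in the form used for contracting holomorphic flows.

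\medskip

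\noindent\textbf{Step 1: Normalization via Brunella.} First I would invoke Brunella's result (\cite{brunella_remarque_1994}) to produce the holomorphic diffeomorphism $\Phi$ from a neighborhood of $\overline{\Omega}$ to a neighborhood of $\overline{B^n}$. The point is that a bounded strongly convex domain with smooth boundary, together with a holomorphic vector field transverse to (or inducing a foliation on) its boundary, can be biholomorphically transported to the ball; after pushing forward by $\Phi$ we may assume $\Omega = B^n$, $\partial\Omega = S^{2n-1}$, and $\xi$ is a holomorphic vector field on a neighborhood of $\overline{B^n}$. Using Ito's results (\cite{ito_poincare-bendixson_1994}, \cite{ito_number_1996}), $\xi$ has a single zero inside $B^n$, which (after an affine change) we place at $0$, and at $0$ its linear part has eigenvalues $\lambda = (\lambda_1,\dots,\lambda_n)$ lying in the Poincaré domain (all in an open half-plane), so that $0$ is a contracting fixed point of the associated local flow. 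This is exactly the setting of Haefliger's paper \cite{haefliger_deformations_1985}.

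\medskip

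\noindent\textbf{Step 2: Poincaré--Dulac and the model space $\mathcal{g}_\lambda$.} Next I would recall that, by the Poincaré--Dulac normal form theorem, $\xi$ is holomorphically conjugate near $0$ to an element of $\mathcal{g}_\lambda$, the (finite-dimensional) Lie algebra of polynomial vector fields commuting with the semisimple linear field $\sum_j \lambda_j z_j \partial/\partial z_j$. The deformation theory of $\mathcal{F}_0$ is then governed by the complex of transverse vector fields; the key computation — which I expect largely reduces to Haefliger's — is that the relevant $H^1$ (infinitesimal deformations) is identified with $\mathcal{g}_\lambda / \big(\mathbb{C}\,\xi + L_\xi(\mathcal{g}_\lambda)\big)$ and that $H^2$ (obstructions) vanishes, so that $S$ as defined (a complement to $\mathbb{C}\,\xi + L_\xi(\mathcal{g}_\lambda)$ in $\mathcal{g}_\lambda$) is a smooth finite-dimensional parameter space of the right dimension. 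The family $\mathcal{F}_0^S = (\mathcal{F}_0(\xi + \Phi^* s))_{s\in S}$ is manifestly a deformation; one checks it is effective/complete at $0$ by showing the Kodaira--Spencer map $T_0 S \to H^1$ is an isomorphism — and by construction it is, since adding $s \in S$ moves $\xi$ precisely transversally to the trivial directions $\mathbb{C}\,\xi$ (reparametrization of the flow) and $L_\xi(\mathcal{g}_\lambda)$ (conjugation by flows of elements of $\mathcal{g}_\lambda$).

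\medskip

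\noindent\textbf{Step 3: Versality.} Given an arbitrary germ of deformation $\mathcal{F}_0^{S'}$ over $(S',0)$, I would follow Haefliger's argument (\cite{haefliger_deformations_1985}, and the general scheme of \cite{haefliger_deformations_1983}) to construct $\varphi\colon (S',0)\to(S,0)$ with $\mathcal{F}_0^{S'} \cong (\mathcal{F}_0^{\varphi(s')})$. The mechanism is: any nearby transversely holomorphic foliation on $S^{2n-1}$ of the given type extends to a holomorphic foliation on a neighborhood of $\overline{B^n}$ (this is where strong convexity / the ball geometry and Ito's transversality are essential — the foliation is ``filled in'' to the interior), hence is the orbit foliation of a holomorphic vector field near $0$ with eigenvalues close to $\lambda$; by Poincaré--Dulac with parameters this vector field is conjugate, by a conjugation depending analytically (holomorphically) on $s'$, to one of the form $\xi + (\text{element of }\mathcal{g}_\lambda)$; projecting the $\mathcal{g}_\lambda$-component to the complement $S$ modulo the trivial directions $\mathbb{C}\,\xi + L_\xi(\mathcal{g}_\lambda)$ and absorbing the $L_\xi(\mathcal{g}_\lambda)$-part into a further flow conjugation yields the map $\varphi$. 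Openness of the deformation parameter combined with the inverse/implicit function theorem in the relevant (possibly singular) analytic category closes the argument.

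\medskip

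\noindent\textbf{Main obstacle.} I expect the hard part to be Step 3, specifically the \emph{extension} of a nearby boundary foliation to a holomorphic vector field on a neighborhood of the interior, uniformly and analytically in the parameter $s'$ — i.e., showing that the property of ``coming from the intersection with $\partial\Omega$ of the orbits of a holomorphic vector field near $\overline\Omega$'' is preserved under small deformation and that the extension can be chosen to depend analytically on parameters. This requires carefully combining Ito's Poincaré–Bendixson-type control of the trajectories near $S^{2n-1}$ with the strong convexity of $\Omega$ (to rule out escaping trajectories and guarantee a well-defined contracting return dynamics), plus a parametrized version of Haefliger's integrability/extension argument. The Poincaré–Dulac linearization with holomorphic dependence on parameters is delicate when resonances are present (the normal form is only polynomial, not linear), but since $\mathcal{g}_\lambda$ is finite-dimensional and the eigenvalues stay in the Poincaré domain for $s'$ small, the resonant monomials are stable and the normalization can be carried out order by order with analytic parameter dependence.
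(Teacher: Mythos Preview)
Your Steps 1--2 are broadly aligned with the paper: Brunella and Ito reduce to the ball with a unique singularity at $0$ whose eigenvalues lie in the Poincar\'e domain, and Poincar\'e--Dulac puts $\xi$ into $\mathcal{g}_\lambda$. You also correctly anticipate that versality will come from showing the Kodaira--Spencer map $T_0S \to H^1(S^{2n-1},\theta^{tr}_{\mathcal{F}_0})$ is an isomorphism.

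However, your Step 3 takes a route the paper deliberately avoids, and you misidentify the main obstacle. The paper does \emph{not} prove versality by extending a nearby boundary foliation to a holomorphic vector field on the interior and running a parametrized Poincar\'e--Dulac. Instead, the paper invokes the abstract Kuranishi-type criterion (Corollary~\ref{cor:2.5.1} of \cite{haefliger_deformations_1983}): since $S$ is smooth, it suffices to show the Kodaira--Spencer map is an isomorphism, and versality then follows for free from the general existence theorem for versal deformations of transversely holomorphic foliations on compact manifolds. No extension argument, no parametrized normal form, no ``filling in'' of boundary foliations is needed. Your proposed extension step would be genuinely hard (it is a Hartogs-type statement for foliations, not functions, with analytic parameter dependence), and it is simply bypassed.

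The actual hard work---which you gloss over in Step 2 as ``largely reduces to Haefliger's''---is the cohomological computation showing that $\rho_{\mathcal{F}_0}$ is an isomorphism. The paper does this by (a) identifying $H^k(S^{2n-1},\theta^{tr}_{\mathcal{F}_0}) \cong H^k(W,\theta^{tr}_{\mathcal{F}})$ on $W=\mathbb{C}^n\setminus 0$ via a homotopy along the flow; (b) computing $H^k(W,\sigma)$ and $H^k(W,\theta)$ explicitly using a Stein cover, Mayer--Vietoris, and K\"unneth; (c) showing $L_\xi$ is an isomorphism on the relevant higher cohomology, first for the diagonal $\xi_0$ by direct power-series manipulation, then for $\xi=\xi_0+\xi_R$ by an upper-semicontinuity and index-invariance argument from elliptic complex theory (this is where the freedom to take the resonant coefficients $a_{j,m}$ small is used); and (d) chasing the long exact sequences associated to the short exact sequences $0\to\sigma^{tr}_{\mathcal{F}}\to\sigma\to\sigma\to0$, $0\to\theta^\xi\to\theta\to\theta\to0$, $0\to\sigma^{tr}_{\mathcal{F}}\to\theta^\xi\to\theta^{tr}_{\mathcal{F}}\to0$ to conclude that $p\circ\delta'|_{T_0S}$ is an isomorphism. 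This is where the genuine technical content lies, not in your Step 3.
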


This result is a generalization of the one in \cite{haefliger_deformations_1985} since we consider bounded strongly convex domains with smooth boundary instead of the unit ball of $\mathbb C^n$, and since we don't have to assume anything on the singularities of the holomorphic vector field $\xi$ inducing $\mathcal{F}_0$ (i.e. $\xi$ need not have $0$ as a contracting fixed point).\\

In the first section, we recall the general theory of the deformation of transversely holomorphic foliations (\cite{haefliger_deformations_1983}, \cite{duchamp_deformation_1979}), summarize key results, and consider the special case of a holomorphic family of vector fields.\\
With the help of Brunella and Ito’s results, we simplify the problem in Section \ref{sec:3}, which in turn allows us, in Section \ref{sec:4}, to restate the main theorem \ref{thm:defo}, after recalling the notions of resonance for holomorphic vector fields as discussed by Arn’old in \cite{szucs_geometrical_1996}.\\
Eventually, in Section \ref{sec:5}, we prove the main theorem by following the proof of \cite{haefliger_deformations_1985} in case of a contracting holomorphic vector field.

\section{Deformation theory of (transversely) holomorphic foliations}
\subsection{Transversely holomorphic foliations}
\subsubsection{Definitions}
We first recall some definitions and fundamental results by Haefliger, Girbau and Sundararaman (see \cite{haefliger_deformations_1983}) and Kalka and Duchamp (see \cite{duchamp_deformation_1979}, \cite{duchamp_holomorphic_1984}). See also \cite{gomez-mont_transversal_1980}.\\
A (smooth) \textit{transversely holomorphic foliation} of complex codimension $p$ on a smooth manifold $M$ of dimension $n$ can be given equivalently by one of the following equivalent data :

\begin{enumerate}
    \item a smooth atlas $(U_i, \psi_i)_i$ on $M$ satisfying :
            \begin{enumerate}
                \item For all $i$, $\psi_i(U_i)=U_i^1 \times U_i^2$, where $U_i^1$ and $U_i^2$ are connected open subsets of $\mathbb R^{n-2p}$ and $\mathbb C^p$ respectively ;
                \item For all $i,j$, there exist maps $f_{ij}$ and $h_{ij}$ such that
                \[\forall(x,z) \in \psi_i(U_i \cap U_j) \subset \mathbb R^{n-2p} \times \mathbb C^p, \; \psi_i \circ \psi_j^{-1}(x,z)=(f_{ij}(x,z), h_{ij}(z))\]
                with $h_{ij}$ holomorphic.
            \end{enumerate}
    \item an open covering $(U_i)_i$ of $M$ and a family of smooth submersions $s_i:U_i \to \mathbb C^p$ such that for $i,j$, there exists a biholomorphic map $g_{ij}:s_j(U_i\cap U_j) \to s_i(U_i \cap U_j)$ satisfying :
    $$s_i=g_{ij}\circ s_j \quad \text{ on } U_i\cap U_j.$$
\end{enumerate}
The second definition best captures the transversal nature of the foliation, particularly its transversal holomorphic aspect.
By abuse of language, we will call a distinguished chart an element $(U_i, \psi_i)$  or $(U_i, f_i)$ depending on the context.\\
Let $(S,0)$ a germ of an analytic space, that is the germ at $0$ of an analytic set $S \subset \mathbb C^k$ containing $0$. 
Let $J_{S,0}$ the ideal of germs $f \in \mathcal{O}_{\mathbb C^k,0}$ which vanish on $(S,0)$.
The ring $\mathcal{O}_{S,0}$ of germs of functions on $(S,0)$ which can be extended as germs of holomorphic functions on $(\mathbb C^k,0)$ is by definition the quotient of $\mathcal{O}_{\mathbb C^k,0}$ by $J_{S,0}$ (see \cite{demailly_complex_2012}).\\
A \textit{smooth function} $g$ on $(S,0)$ is represented by a smooth function $G$ on a neighborhood of $0$ in $\mathbb C^k$ and two such functions $G, G'$ represent the same $g$ if the germ of $G-G'$ is in the ideal generated by $J_{S,0}$ in the ring of germs of smooth functions on $\mathbb C^k$ at $0$.\\
Let $M$ a smooth manifold. A \textit{smooth function} $g^S$ on $(S,0)\times M$ is a family of smooth functions $(g_x)_{x \in M}$ on $(S,0)$ varying smoothly with $x\in M$, i.e. for every $s\in S$, the map $g^s=x \mapsto g_x(s)$ is smooth on $M$.
Such a smooth function $g^S$ will be considered as a \textit{deformation} of $g^0:=(g_x(0))_{x \in M}$.
If $g^S$ takes value in $\mathbb C$, then it is said to be \textit{holomorphic in $s$} if for every $x \in M$, the function $g_x$ is holomorphic on $(S,0)$.
We define in the same way smooth maps on $(S,0)\times M$ to $\mathbb R^m$ as well as smooth maps on $(S,0)\times M$ to $\mathbb C^m$  holomorphic in $s$.\\
Now, consider a transversely holomorphic foliation $\mathcal{F}$ on $M$ of complex codimension $p$. \textit{A germ of deformation $\mathcal{F}^S$ of $\mathcal{F}$ parametrized by $(S,0)$} is the data of an open covering of $(U_i)_i$ and for each $i$ a smooth map $f_i^S$ on $(S,0)\times U_i$ holomorphic in $s$ which is, for $s\in (S,0)$ fixed, a submersion such that for $i,j$, there exists a holomorphic family $(g^s_{ij})_{s\in (S,0)}$ of biholomorphic maps $g_{ij}^s:f_j^s(U_i\cap U_j) \to f_i^s(U_i\cap U_j)$ satisfying :
$$f_i^s=g_{ij}^s\circ f_j^s \quad \text{ on } U_i \cap U_j.$$
Moreover, the transversely holomorphic foliation $\mathcal{F}$ can be defined by the open covering $(U_i)_i$, the family of submersions $(f_i^0)_i$ and the family of biholomorphic transition maps $(g_{ij}^0)_{ij}$.\\
If $\mathcal{F}^S$ and $\mathcal{F'}^S$ are two deformations of $\mathcal{F}$ parametrized by the same germ of space $(S,0)$, they are \textit{isomorphic} if there exists a smooth map $h^S=(h^s)_{s \in (S,0)}$ on $(S,0)\times M$ consisting of diffeomorphisms $h^s:M \to M$ such that for each $s\in(S,0)$, $\mathcal{F}^s$ and $(h^s)^*\mathcal{F'}^s$ define the same transversely holomorphic foliation.
That is, if the deformation $\mathcal{F'}^S$ and $\mathcal{F}^S$ are given by a common covering $(W_i)_i$ and, for each $i$, respectively by a smooth map $f_i'^s$ on $(S,0) \times W_i$ holomorphic in $s$ which is a submersion for each $s$ and a smooth map $f_i^s$ on $(S,0) \times W_i$ holomorphic in $s$ which is a submersion for each $s$, then there exists a holomorphic family $(g^s_{ij})_{s\in (S,0)}$ of biholomorphic maps $g_{ij}^s$ on open sets of $\mathbb C^p$ satisfying :
$$f_i^s=g_{ij}^s\circ (f_j^{'s}\circ h^s)$$
when defined.\\
If $\varphi : (S',0) \to (S,0)$ is a analytic morphism and $\mathcal{F}^S$ is a deformation of $\mathcal{F}$ parametrized by $(S,0)$, then $\mathcal{F}^{\varphi(S')}=(\mathcal{F}^{\varphi(s')})_{s' \in (S',0)}$ is a deformation of $\mathcal{F}$ parametrized by $(S',0)$ and is called \textit{the deformation induced by $\varphi$.}
\begin{remark}
    \leavevmode
    \begin{itemize}
        \item If $\mathcal{F'}^S=(\mathcal{F'}^s)_{s\in S}$ is a germ of deformation of $\mathcal{F}'_0$ parametrized by $(S,0)$ on $M'$, and $h=(h_s)_{s \in S}$ a smooth family of diffeomorphisms from $M$ to $M'$, then $h^*\mathcal{F}'^S:=(h_s^*(\mathcal{F'}^s))_{s\in S}$ is a germ of deformation of $h_0^*(\mathcal{F}_0')$ parametrized by $(S,0)$ on $M$.\\
        We can also define a germ of deformation of $(h_0)_*\mathcal{F}_0$ parametrized by $(S,0)$ on $M'$ by $h_*\mathcal{F}^S:=((h_s)_*\mathcal{F}^s)_{s \in S}$ if $\mathcal{F}^S$ is a germ of deformation of $\mathcal{F}_0$ parametrized by $(S,0)$ on $M$.
        \item With the previous notations, let $\mathcal{F}^S$ and $\mathcal{F}'^S$ be two germs of deformation of $\mathcal{F}_0$ on $M$ parametrized by $(S,0)$. \\
        Then $\mathcal{F}^S$ and $\mathcal{F}'^S$ are isomorphic if and only if $h_*\mathcal{F}^S$ and $h_*\mathcal{F}'^S$ are isomorphic.
    \end{itemize}
     
\end{remark}
\subsubsection{Some sheaves}
Consider a transversely holomorphic foliation $\mathcal{F}$ of complex codimension $p$ on a smooth manifold $M$.
\begin{definition}
    The sheaf $\sigma^{tr}_{\mathcal{F}}$ of \textit{transversely holomorphic functions of $\mathcal{F}$} on $M$ is defined as the unique sheaf on $M$ whose restriction at a distinguished chart $(U_i,f_i)$ for the transversely holomorphic foliation is precisely the pullback of the sheaf of holomorphic functions on $\mathbb C^p$ by $f_i$, that is 
$$\restr{\sigma^{tr}_{\mathcal{F}}}{U_i}=f_i^{-1}\sigma_{\mathbb C^p}$$
where $\sigma_{\mathbb C^p}$ is the sheaf of holomorphic functions on $\mathbb C^p$ (see \cite{perrin_algebraic_2008} for the existence and uniqueness of such a sheaf).\\
This sheaf is called the structural sheaf of the transversely holomorphic foliation $\mathcal{F}$ (see \cite{gomez-mont_transversal_1980} for another equivalent definition).
\end{definition}
\begin{definition}
    The sheaf $\eta_{\mathcal{F}}$ of \textit{infinitesimal automorphisms of $\mathcal{F}$} on $M$ is defined as the sheaf on $M$ whose sections are smooth vector fields giving local flows which are isomorphisms of $\mathcal{F}$, that is which pull-back $\mathcal{F}$ on $\mathcal{F}$. 
    It can also be seen as the unique sheaf on $M$ whose restriction at a distinguished chart $(U_i,\psi_i)$ for the transversely holomorphic foliation is precisely the pullback by $\psi_i$ of the sheaf of smooth vector fields $X$ which writes as
    $$X(x,z)=\sum_{i=1}^k a_i(x,z)\dfrac{\partial}{\partial x_i} + \sum_{j=1}^p b_j(z)\dfrac{\partial}{\partial z_j} +\overline{b}_j(z)\dfrac{\partial}{\partial \overline{z_j}} $$
    where each $b_j$ is holomorphic.
    \end{definition}
\begin{definition}
    The sheaf $\theta^{tr}_{\mathcal{F}}$ of\textit{ transversely holomorphic vector fields of $\mathcal{F}$} on $M$ is defined as the quotient of the sheaf $\theta_{\mathcal{F}}$ by the sheaf of smooth vector fields tangent to the leaves of $\mathcal{F}$.
    It can also be seen as the unique sheaf on $M$ whose restriction at a distinguished chart $(U_i,f_i)$ for the transversely holomorphic foliation is precisely the pullback by $f_i$ of the sheaf of holomorphic vector fields on $\mathbb C^p$, that is 
$$\restr{\theta^{tr}_{\mathcal{F}}}{U_i}=f_i^{-1}\theta_{\mathbb C^p}$$
where $\theta_{\mathbb C^p}$ is the sheaf of holomorphic vector fields on $\mathbb C^p$ (again, see \cite{gomez-mont_transversal_1980} for another equivalent definition).
    \end{definition}
Haefliger, Jirbau and Sundararaman call it the \textit{fundamental sheaf}.
\begin{remark}
\leavevmode
    \begin{itemize}
        \item If we write as $\theta^{||}_\mathcal{F}$ the sheaf of smooth vector fields on $M$ which are tangent to the leaves of $\mathcal{F}$, then the following complex of sheaves is exact :
        \begin{align}
            0 \longrightarrow \theta^{||}_{\mathcal{F}} \longrightarrow \eta_\mathcal{F} \longrightarrow \theta^{tr}_{\mathcal{F}} \longrightarrow 0.
        \end{align}
        Therefore, as $\theta^{||}_\mathcal{F}$ is a fine sheaf, it comes for $k\in \mathbb N^*$, $$H^k(M,\eta_\mathcal{F})\cong H^k(M,\theta^{tr}_{\mathcal{F}}).$$
        \item Since each $f_i$ is a submersion thus an open map, we can also write $\restr{\theta^{tr}_{\mathcal{F}}}{U_i}$ as $$\left \{g\circ f_i, \quad g :f_i(U_i) \to \mathbb C^p \;\text{ is holomorphic}\right \}.$$
        \item For a distinguished chart $(U_i, f_i)$, 
        $$\restr{\theta^{tr}_{\mathcal{F}}}{U_i}=f_i^{-1}\theta_{\mathbb C^p}\cong f_i^{-1}{\sigma_{\mathbb C^p}}^{\oplus p}\cong(f_i^{-1}\sigma_{\mathbb C^p})^{\oplus p}=\restr{(\sigma^{tr}_{\mathcal{F}})^{\oplus p}}{U_i}.$$
    \end{itemize}
    
\end{remark}
\begin{definition}
    Consider a germ of deformation $\mathcal{F}^S$ of a transversely holomorphic foliation $\mathcal{F}$ on $M$ parametrized by $(S,0)$.\\
    There is a well-defined linear map $\rho$ from the tangent space of $S$ at $0$ to the first cohomology group of the sheaf of transversely holomorphic vector fields of $\mathcal{F}$ on $M$, usually called the \textit{Kodaira-Spencer map}, defined as follow :
    the vector $\restr{\dfrac{\partial}{\partial s}}{0} \in T_0S$ is mapped to the cohomology class of the cocycle associating to $U_i\cap U_j$ the section $\left ( \restr{\dfrac{\partial g_{ij}^s}{\partial s}}{0} \right )\circ f_j^0$ of $\restr{\theta^{tr}_{\mathcal{F}}}{U_i \cap U_j}$.
\end{definition}
\subsubsection{Versal deformation}
In \cite{haefliger_deformations_1983}, the authors proved the following result which is a version of the Kodaira-Spencer-Kuranishi theorem for transversely holomorphic foliations on compact manifolds:
\begin{theorem}
    \label{thm:2.5}
    Let $\mathcal{F}$ a transversely holomorphic foliation on a compact manifold $M$.\\
    Then there is a (unique up to isomorphism) germ $(S,0)$ of analytic space parameterizing a \textit{versal} germ of deformation $\mathcal{F}^S$ of $\mathcal{F}$. That is, for any germ of deformation $\mathcal{F'}^{S'}$ of $\mathcal{F}$ parametrized by an analytic space $(S',0)$, there exists an analytic map $\varphi : (S',0) \to (S,0)$ such that $\mathcal{F'}^{S'}$ is isomorphic to $\mathcal{F}^{\varphi(S')}$.\\
    Moreover, the differential $d_0\varphi$ of $\varphi$ at $0$ is unique.
\end{theorem}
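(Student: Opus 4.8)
The plan is to transpose Kuranishi's construction of the semi-universal deformation of a compact complex manifold to the transversely holomorphic setting, the relevant tangent and obstruction spaces being $H^1(M,\theta^{tr}_{\mathcal F})$ and $H^2(M,\theta^{tr}_{\mathcal F})$. The first step is to recast the definitions of Section~1 into a differential graded Lie algebra: one checks that a germ of deformation $\mathcal F^S$ is the same datum as a family $(\varphi^s)_s$, holomorphic in $s$ and vanishing at $s=0$, of smooth sections of a suitable bundle of $\theta^{tr}_{\mathcal F}$-valued forms solving a Maurer--Cartan equation $D\varphi^s+\tfrac12[\varphi^s,\varphi^s]=0$; that two such data define isomorphic deformations exactly when they differ by the gauge action of the exponential of the sheaf $\eta_{\mathcal F}$ of infinitesimal automorphisms; and that under this dictionary the Kodaira--Spencer map sends $\partial/\partial s|_{0}$ to the class of $(\partial\varphi^s/\partial s)|_{s=0}$ in $H^1(M,\theta^{tr}_{\mathcal F})$. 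For leaves reduced to points this is precisely the Newlander--Nirenberg / Kodaira--Spencer description of deformations of complex structures.

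The technical heart — and the step I expect to be the main obstacle — is that the complex resolving $\theta^{tr}_{\mathcal F}$, the transverse Dolbeault complex $(\Omega^{0,\bullet}_{\mathcal F}(Q),\bar\partial_{\mathcal F})$ with $Q$ the transverse holomorphic tangent bundle, is only transversally elliptic, so on a general compact $M$ neither finite-dimensionality of its cohomology nor Hodge theory is available. Following Duchamp and Kalka \cite{duchamp_deformation_1979}, I would replace it by the mixed complex whose degree-$k$ part is $\mathcal A^k=\bigoplus_{a+b=k}\Gamma\big(M,\Lambda^aF^*\otimes\Lambda^b(\overline N)^{*}\otimes Q\big)$, where $F$ is the tangent bundle to the leaves and $N$ the complex normal bundle, with total differential $D=d_{\mathcal F}+\bar\partial_{\mathcal F}$ built from the leafwise flat (Bott) connection and the transverse $\bar\partial$. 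One checks that $D$ is an elliptic complex, that it carries the bracket of $Q$-valued forms, and that it is quasi-isomorphic to the transverse Dolbeault complex in the degrees that matter, so that $H^1(\mathcal A^\bullet,D)\cong H^1(M,\theta^{tr}_{\mathcal F})$ with the obstruction living in $H^2(\mathcal A^\bullet,D)$. Compactness of $M$ then yields finite-dimensional cohomology and, after fixing a Hermitian metric, a Hodge decomposition $\mathcal A^k=\mathcal H^k\oplus\operatorname{im}D\oplus\operatorname{im}D^*$ with harmonic projector $H$ and Green operator $G$.

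With an elliptic DGLA in hand, I would run the Kuranishi machine. Solve the Maurer--Cartan equation by the ansatz $\varphi=\eta+\tfrac12 D^*G[\varphi,\varphi]$ with $\eta\in\mathcal H^1$; for $\eta$ in a small ball this has a unique solution $\varphi(\eta)$, depending holomorphically on $\eta$, and $\varphi(\eta)$ actually solves Maurer--Cartan precisely when the analytic obstruction map $\eta\mapsto H[\varphi(\eta),\varphi(\eta)]\in\mathcal H^2$ vanishes. Define $(S,0)$ to be the germ at $0$ of the analytic set $\{\eta : H[\varphi(\eta),\varphi(\eta)]=0\}\subset\mathcal H^1$, and let $\mathcal F^S$ be the family whose member over $\eta$ is the transversely holomorphic foliation encoded by $\varphi(\eta)$; by construction its Kodaira--Spencer map is the identification $T_0S=\mathcal H^1\cong H^1(M,\theta^{tr}_{\mathcal F})$, in particular bijective.

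It remains to prove versality and the uniqueness statements. Given an arbitrary germ of deformation $\mathcal{F'}^{S'}$ over a possibly non-reduced analytic germ $(S',0)$, represent it by a Maurer--Cartan element $\varphi'$ with coefficients in $\mathcal O_{S',0}$ and put it into harmonic gauge by a second fixed-point argument: solve $D^*\big(e^{v}\!\cdot\varphi'\big)=0$ for $v\in(\operatorname{im}D^*)\otimes\mathcal O_{S',0}$ vanishing at $0$; then $s'\mapsto H\varphi'(s')$ is an analytic map $(S',0)\to\mathcal H^1$ whose image lies in $S$ and which realizes $\mathcal{F'}^{S'}\cong\mathcal F^{\varphi(S')}$. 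The one genuinely delicate point here is bookkeeping: all the implicit-function and fixed-point estimates must be carried out over $\mathcal O_{S',0}$, i.e. modulo the ideal $J_{S',0}$, so that the gauge transformation and the comparison map are honestly defined on the germ $(S',0)$ and not merely on an ambient $\mathbb C^k$. Finally, any such $\varphi$ must have $d_0\varphi$ equal to the Kodaira--Spencer map of $\mathcal{F'}^{S'}$ composed with the inverse of $\rho$, by naturality of Kodaira--Spencer under pullback, which forces $d_0\varphi$ to be unique; and uniqueness of $(S,0)$ up to isomorphism is the usual formal consequence of versality together with bijectivity of $\rho$ (two such germs receive mutually comparison maps whose composite has identity differential, hence is an automorphism, so each comparison map is an isomorphism of germs).
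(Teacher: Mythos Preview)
The paper does not prove this theorem: it is quoted verbatim as a result of Haefliger, Girbau and Sundararaman \cite{haefliger_deformations_1983}, and the surrounding discussion only recalls the Duchamp--Kalka elliptic resolution that underlies their argument. So there is no ``paper's own proof'' to compare against.

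Your sketch is the standard Kuranishi-type argument and is, in outline, exactly the route taken in \cite{haefliger_deformations_1983}. Two remarks. First, the mixed complex you write down, $\mathcal A^k=\bigoplus_{a+b=k}\Gamma\big(\Lambda^aF^*\otimes\Lambda^b\overline N^{\,*}\otimes Q\big)$ with differential $d_{\mathcal F}+\bar\partial_{\mathcal F}$, is precisely the complex $(E_Q^{*\bullet},d_Q)$ with $E\cong T_{\mathbb C}\mathcal F\oplus Q^{(0,1)}$ that the paper attributes to Duchamp--Kalka; so you are not ``replacing'' their resolution but describing it. Second, the only genuinely nontrivial ingredients you leave implicit are (a) that this elliptic complex carries a graded Lie bracket making it a DGLA and that its Maurer--Cartan elements really correspond to nearby transversely holomorphic foliations (this uses the identification of such foliations with involutive subbundles $E'\subset T_{\mathbb C}M$ close to $E$, encoded as graphs of maps $E\to Q^{(1,0)}$), and (b) the Sobolev/Banach-analytic framework needed to make the fixed-point arguments and the gauge-fixing over $\mathcal O_{S',0}$ rigorous. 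Both are handled in \cite{haefliger_deformations_1983}; your sketch identifies them correctly as the places where the work lies.
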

\begin{corollary}
    \label{cor:2.5.1}
    With the previous assumptions, if $\mathcal{F}^{S'}$ is a germ of deformation of $\mathcal{F}$ parametrized by a non singular analytic space $(S',0)$ such that the Kodaira-Spencer map $\rho : T_0S' \to H^1(X,\theta^{tr}_{\mathcal{F}})$ is an isomorphism, then the given map $\varphi : (S',0) \to (S,0)$ is an isomorphism.
\end{corollary}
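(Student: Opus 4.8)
The plan is to derive the statement from the versality of $\mathcal{F}^S$ (Theorem~\ref{thm:2.5}), the functoriality of the Kodaira--Spencer map, and the nonsingularity of $(S',0)$. Versality applied to $\mathcal{F}^{S'}$ produces exactly the map $\varphi:(S',0)\to(S,0)$ of the statement, together with an isomorphism of deformations $\mathcal{F}^{S'}\cong\mathcal{F}^{\varphi(S')}$; the goal is to promote this to an isomorphism of germs of analytic spaces, and the whole argument will be organized around the differential $d_0\varphi:T_0S'\to T_0S$.

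First I would record the identity $\rho_{S'}=\rho_S\circ d_0\varphi$, where $\rho_{S'},\rho_S$ denote the Kodaira--Spencer maps of $\mathcal{F}^{S'}$ and of the versal family $\mathcal{F}^S$ into $H^1(M,\theta^{tr}_{\mathcal{F}})$. This rests on two elementary facts about the cocycle $\bigl(\restr{\partial g_{ij}^s/\partial s}{0}\bigr)\circ f_j^0$ defining $\rho$: that the Kodaira--Spencer map of a deformation induced by $\varphi$ equals $\rho_S\circ d_0\varphi$ (chain rule applied to $s'\mapsto g_{ij}^{\varphi(s')}$), and that isomorphic deformations have the same Kodaira--Spencer cocycle class. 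Since $\rho_{S'}$ is an isomorphism by hypothesis, this factorization already forces $d_0\varphi$ to be injective and $\rho_S$ to be surjective; and since the versal family furnished by Theorem~\ref{thm:2.5} is the Kuranishi family, its Kodaira--Spencer map $\rho_S$ is itself an isomorphism (its base sits inside $H^1(M,\theta^{tr}_{\mathcal{F}})$ by construction; see \cite{haefliger_deformations_1983}). Hence $d_0\varphi=\rho_S^{-1}\circ\rho_{S'}$ is bijective.

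The final and most substantial step is to pass from ``$d_0\varphi$ bijective'' to ``$\varphi$ is an isomorphism'', and this is where the nonsingularity of $(S',0)$ enters. Dually, bijectivity of $d_0\varphi$ means the induced map $\mathfrak{m}_{S,0}/\mathfrak{m}_{S,0}^2\to\mathfrak{m}_{S',0}/\mathfrak{m}_{S',0}^2$ on cotangent spaces is bijective, so Nakayama's lemma makes the comorphism $\varphi^{\#}:\mathcal{O}_{S,0}\to\mathcal{O}_{S',0}$ surjective, giving $\mathcal{O}_{S',0}\cong\mathcal{O}_{S,0}/\ker\varphi^{\#}$. Put $m=\dim_{\mathbb C}H^1(M,\theta^{tr}_{\mathcal{F}})$. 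Since $S'$ is smooth, $\mathcal{O}_{S',0}$ is regular of Krull dimension $m$ (indeed $\dim T_0S'=m$ because $\rho_{S'}$ is an isomorphism), whereas $\mathcal{O}_{S,0}$ has embedding dimension $\dim T_0S=m$; comparing, $m\le\dim\mathcal{O}_{S,0}\le m$, so $\mathcal{O}_{S,0}$ is regular as well, and a surjection of regular local Noetherian rings of the same dimension has trivial kernel. Therefore $\varphi^{\#}$ is an isomorphism and so is $\varphi$ --- which, as a byproduct, also shows $(S,0)$ is smooth. The genuinely delicate points I expect are exactly this last commutative-algebra passage and the input that $\rho_S$ is bijective for the versal family; the Kodaira--Spencer bookkeeping in the middle step is routine once the cocycle description of $\rho$ is in hand.
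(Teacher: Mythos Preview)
The paper does not supply its own proof of this corollary: it is stated immediately after Theorem~\ref{thm:2.5} without argument, presumably deferring to the original source \cite{haefliger_deformations_1983}. Your proof is correct and is the standard deformation-theoretic argument: the factorization $\rho_{S'}=\rho_S\circ d_0\varphi$ together with bijectivity of the Kuranishi Kodaira--Spencer map $\rho_S$ forces $d_0\varphi$ to be an isomorphism, and the commutative-algebra passage (Nakayama plus comparison of Krull and embedding dimensions) upgrades this to an isomorphism of germs once $(S',0)$ is smooth. There is nothing to compare against in the paper itself; your write-up in fact supplies the details the paper omits.
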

\begin{remark}
     By the previous remarks, if $\mathcal{F}^S$ is a versal germ of deformation of $\mathcal{F}_0$ parametrized by $(S,0)$ on $M$, and $h=(h_s)_{s \in S}$ a smooth family of diffeomorphisms from $M$ to $M'$, then $h_*(\mathcal{F}^S)$ is a versal germ of deformation of $(h_0)_*(\mathcal{F}_0)$ parametrized by $(S,0)$ on $M'$.
\end{remark}
On the other hand, Kalka and Duchamp have constructed an elliptic resolution for each sheaf $\sigma^{tr}_\mathcal{F}$ and $\theta^{tr}_\mathcal{F}$. Using results coming from the theory of elliptic complexes (see \cite{wells_differential_2008}), they prove that their cohomology groups are all finite dimensional. We will use these results and more than that. \\
The approach of Kalka and Duchamp is the following. Consider a transversely holomorphic foliation $\mathcal{F}$ of complex codimension $p$ on a smooth manifold $M$. Denote by $T\mathcal{F}$ tangent bundle to the leaves of $\mathcal{F}$ and by $Q:=TM/T\mathcal{F}$ the normal bundle of $\mathcal{F}$. We can define a complex structure on the bundle $Q$ by pulling back the natural complex structure on $\mathbb C^p$ via the submersions $f_i$. This complex structure induces a splitting of the complexified normal bundle $$Q^\mathbb C=Q^{(1,0)} \oplus Q^{(0,1)}$$
where $Q^{(0,1)}$ is the complex conjugate of $Q^{(1,0)}$. From the natural short exact sequence of vector bundles 
$$0 \longrightarrow T\mathcal{F} \longrightarrow TM \longrightarrow \nu \longrightarrow 0$$
comes another short exact sequence of (complex) vector bundles
$$0 \longrightarrow E \longrightarrow T_{\mathbb C}M \longrightarrow Q^{(1,0)} \longrightarrow 0$$
where $E\cong T_{\mathbb C}\mathcal{F} \oplus Q^{(0,1)}$.
$E$ is a complex subbundle of $T_{\mathbb C}M$ satisfying $$T_{\mathbb C}M=E+\overline{E} \quad \text{ and } \quad [\Gamma(E), \Gamma(E)] \subset \Gamma(E)$$
where $\Gamma(E)$ refers to the sheaf of sections of $E$. In fact the complex version of the Frobenius theorem gives that this data is equivalent to a transversely holomorphic foliation of complex codimension $p$ on $M$.
This allows them to find a one-to-one correspondence between transversely holomorphic foliations "near $\mathcal{F}$" and a particular subspace of $\text{Hom}_{\mathbb C}(E, Q^{(1,0)})$, denote by $\text{Fol}(\mathcal{F})$.
Eventually, they prove in \cite{duchamp_holomorphic_1984} that there exists a germ of analytic set $B\subset H^1(M, \theta^{tr}_\mathcal{F})$ at $0\in H^1(M, \theta^{tr}_\mathcal{F})$ and a holomorphic map $\Phi : B \to \text{Fol}(\mathcal{F})$ such that every holomorphic foliation "sufficiently near $\mathcal{F}$" is equivalent, via a diffeomorphism of $M$ near the identity, to an element in the image of $\Phi$.\\
If we note, for $s\geq1$, $E^{*s}$ the bundle $\bigwedge^s E^*$, and $d_E : E^{*s} \to E^{*s+1}$ given in local coordinates by differentiating a smooth complex valued form with respect to $(x, \overline{z})$, then:
\begin{itemize}
    \item the complex $(E^{*\bullet}, d_E)$ is elliptic ;
    \item the sequence
    $$0 \longrightarrow \sigma^{tr}_\mathcal{F}\longrightarrow \Gamma(E^{*0})\xrightarrow[]{d_E} \Gamma(E^{*1})\xrightarrow[]{d_E} \Gamma(E^{*2})\xrightarrow[]{d_E} \cdots $$
    is a resolution of the sheaf $\sigma^{tr}_\mathcal{F}$.
\end{itemize}
In the same way, if we note for $s \geq 1$ the bundle $E_Q^{*s}=E^{*s} \otimes_{\mathbb C}Q^{(1,0)}$ and $d_Q=d_E \otimes id : E_Q^{*s} \to E_Q^{*s+1}$, then:
\begin{itemize}
    \item the complex $(E_Q^{*\bullet}, d_Q)$ is elliptic ;
    \item the sequence
    $$0 \longrightarrow \theta^{tr}_\mathcal{F}\longrightarrow \Gamma(E_Q^{*0})\xrightarrow[]{d_Q} \Gamma(E_Q^{*1})\xrightarrow[]{d_Q} \Gamma(E_Q^{*2})\xrightarrow[]{d_Q} \cdots $$
    is a resolution of the sheaf $\theta^{tr}_\mathcal{F}$.
\end{itemize}
These facts will allow us to use results coming from the theory of elliptic complexes (see \cite{wells_differential_2008}) in our case.
\subsection{Holomorphic foliations}
All of what has been said for transversely holomorphic foliation can be adaptated and stated for holomorphic foliations on complex manifold.\\
A \textit{holomorphic foliation} of (complex) codimension $p$ on a complex manifold $M$ of (complex) dimension $n$ can be given equivalently by one of the following equivalent data :

\begin{enumerate}
    \item a holomorphic atlas $(U_i, \psi_i)_i$ on $M$ satisfying
        \begin{enumerate}
            \item For all $i$, $\psi_i(U_i)=U_i^1 \times U_i^2$, where $U_i^1$ and $U_i^2$ are connected open subsets of $\mathbb C^{n-p}$ and $\mathbb C^p$ respectively ;
            \item For all $i,j$, there exist (holomorphic) maps $f_{ij}$ and $h_{ij}$ such that
            \[\forall(w,z) \in \psi_i(U_i \cap U_j) \subset \mathbb C^{n-p} \times \mathbb C^p, \; \psi_i \circ \psi_j^{-1}(w,z)=(f_{ij}(w,z), h_{ij}(z)).\]
        \end{enumerate}
    \item an open covering $(U_i)_i$ of $M$ and a family of holomorphic submersions $f_i:U_i \to \mathbb C^p$ such that for $i,j$, there exists a biholomorphic map $h_{ij}:s_j(U_i\cap U_j) \to s_i(U_i \cap U_j)$ satisfying :
    $$s_i=h_{ij}\circ s_j \quad \text{ on } U_i\cap U_j.$$
\end{enumerate}
Remark that a holomorphic foliation on a complex manifold $M$ induces naturally a transversely holomorphic foliation on $M$ by considering its underlying smooth structure.\\
A deformation theory of holomorphic foliations on complex manifolds can similarly be formalized, as with transversely holomorphic foliations, by adjusting certain terms. 
Another definition of germ of deformation uses a family of holomorphic adapted atlases $(U_i,\psi_i^s)_i$ satisfying conditions analogous to the previous ones and holomorphic in $s \in S$.\\
The fundamental sheaf becomes the sheaf $\theta_\mathcal{F}$ of infinitesimal automorphisms of $\mathcal{F}$, that is the sheaf of holomorphic vector fields inducing local holomorphic flows which are isomorphisms of $\mathcal{F}$.\\
Also note that the sheaf of transversely holomorphic vector fields for the holomorphic foliation coincides with the sheaf of transversely holomorphic vector fields for the induced transversally holomorphic foliation.\\
The sheaf of infinitesimal automorphisms of $\mathcal{F}$ can be seen as the unique sheaf on $M$ whose restriction at a distinguished chart $(U_i,\psi_i)$ for the holomorphic foliation is precisely the pullback by $\psi_i$ of the sheaf of holomorphic vector fields $X$ which writes as
    $$X(w,z)=\sum_{i=1}^{n-p} a_i(w,z)\dfrac{\partial}{\partial w_i} + \sum_{j=1}^p b_j(z)\dfrac{\partial}{\partial z_j},$$
or equivalently whose restriction to $U_i$ is the set of pull-backs of such holomorphic vector fields by $\psi_i$.\\
In that case, if $\mathcal{F}^S$ is a germ of deformation of a holomorphic foliation $\mathcal{F}$ on $M$ parametrized by $(S,0)$, the Kodaira-Spencer map $\rho$ is defined as the map from the tangent space of $S$ at $0$ to the first cohomology group of the sheaf of infinitesimal automorphisms of $\mathcal{F}$ on $M$ which assigns to the vector $\restr{\dfrac{\partial}{\partial s}}{0} \in T_0S$ the cohomology class of the cocycle associating to $U_i \cap U_j$ the section $(d\psi_i^0)^{-1} \circ\left ( \restr{\dfrac{\partial g_{ij}^s}{\partial s}}{0} \right )\circ \psi_j^0$ of $\restr{\theta_{\mathcal{F}}}{U_i \cap U_j}$, where $g_{ij}^s=\psi_{i}^s\circ (\psi_{j}^s)^{-1}$\\
An analog result to Theorem \ref{thm:2.5} can be proven in that case (see \cite{haefliger_deformations_1983}).\\
Also note that a germ of deformation of a holomorphic foliation on a complex manifold parametrized by an analytic space $(S,0)$ induces naturally a germ of deformation of its associated transversally holomorphic foliation parametrized by the same analytic space $(S,0)$.
\begin{proposition}
    \label{prop:2.6}
    Let $\mathcal{F}^S$ a germ of deformation of a holomorphic foliation $\mathcal{F}$ on a complex manifold $M$ parametrized by analytic space $(S,0)$. Denote by $\rho^{tr}:T_0S\to H^1(M,\theta^{tr}_\mathcal{F})$ the Kodaira-Spencer map measuring the deformation of the induced transversally holomorphic foliation.\\
    Then $\rho^{tr}=p \circ \rho$ where $p : H^1(M,\theta_\mathcal{F}) \to H^1(M,\theta^{tr}_\mathcal{F})$ is the map induced by the projection $\theta_\mathcal{F} \to \theta^{tr}_\mathcal{F}$.
\end{proposition}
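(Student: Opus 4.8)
The plan is to unwind both Kodaira--Spencer maps through the same covering $(U_i)_i$ and the same holomorphic family of adapted atlases $(\psi_i^s)_i$ realizing the deformation $\mathcal{F}^S$, and to observe that the cocycle computing $\rho^{tr}$ is literally the image under $\theta_\mathcal{F}\to\theta^{tr}_\mathcal{F}$ of the cocycle computing $\rho$. First I would fix, for the holomorphic foliation $\mathcal{F}$ and its deformation, a common covering $(U_i)_i$ together with holomorphic families of distinguished charts $\psi_i^s$ as in the definition, so that $g_{ij}^s=\psi_i^s\circ(\psi_j^s)^{-1}$ has the block-triangular form $g_{ij}^s(w,z)=(f_{ij}^s(w,z),h_{ij}^s(z))$ with $h_{ij}^s$ biholomorphic in the transverse variable. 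By the definition recalled just before the proposition, $\rho\bigl(\partial/\partial s|_0\bigr)$ is the class of the \v{C}ech $1$-cocycle
\[
a_{ij}=(d\psi_i^0)^{-1}\circ\Bigl(\restr{\tfrac{\partial g_{ij}^s}{\partial s}}{0}\Bigr)\circ\psi_j^0
\]
with values in $\restr{\theta_\mathcal{F}}{U_i\cap U_j}$, and I must compare this with the transverse cocycle.

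Next I would compute the effect of the projection $\theta_\mathcal{F}\to\theta^{tr}_\mathcal{F}$ on $a_{ij}$. Because $g_{ij}^s$ is block-triangular, $\partial g_{ij}^s/\partial s|_0$ has components $\bigl(\partial f_{ij}^s/\partial s|_0,\ \partial h_{ij}^s/\partial s|_0\bigr)$, and the second component depends only on $z$; when one passes to the quotient by vector fields tangent to the leaves (equivalently, reduces the chart $\psi_i$ to the transverse submersion $f_i$ obtained by post-composing $\psi_i$ with the projection to the $\mathbb C^p$ factor), the $\partial/\partial w$-part of $a_{ij}$ is killed and only $\bigl(\partial h_{ij}^s/\partial s|_0\bigr)\circ f_j^0$ survives. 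But the latter is exactly the cocycle defining $\rho^{tr}\bigl(\partial/\partial s|_0\bigr)$ in $\restr{\theta^{tr}_\mathcal{F}}{U_i\cap U_j}$, since the transition maps of the induced transversely holomorphic foliation are precisely the $h_{ij}^s$. Finally, since these identifications are natural and commute with the \v{C}ech differentials, passing to cohomology classes gives $\rho^{tr}=p\circ\rho$, and linearity in $\partial/\partial s|_0\in T_0S$ is immediate. I would also remark that this is independent of the choice of charts and covering, by the usual refinement argument, so the identity is well posed.

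The only genuinely delicate point is bookkeeping: one must make sure that the normalization $(d\psi_i^0)^{-1}$ appearing in the definition of $\rho$ is compatible, after projection, with the identification $\restr{\theta^{tr}_\mathcal{F}}{U_i}=f_i^{-1}\theta_{\mathbb C^p}$ used in the definition of $\rho^{tr}$ — i.e.\ that the ``change of trivialization'' from $\psi_i$ to $f_i$ really does send the $\theta_\mathcal{F}$-valued cocycle to the $\theta^{tr}_\mathcal{F}$-valued one without introducing a correction term. This is where I expect to spend the most care, but it is a formal verification: the projection $\theta_\mathcal{F}\to\theta^{tr}_\mathcal{F}$ is by construction the one induced on sections by reading off the holomorphic transverse part of a vector field in a distinguished chart, so the two normalizations agree by definition. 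No compactness of $M$ and no deformation-theoretic input beyond the definitions is needed; the statement is essentially a naturality/functoriality assertion for the \v{C}ech description of Kodaira--Spencer classes.
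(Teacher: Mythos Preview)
Your proposal is correct and follows essentially the same approach as the paper: fix a common covering with holomorphic families of distinguished charts $\psi_i^s$, observe that the induced transversely holomorphic deformation is given by the submersions $\mathrm{pr}_2\circ\psi_i^s$ with transition maps $h_{ij}^s=\mathrm{pr}_2\circ g_{ij}^s$, and note that projecting the $\theta_\mathcal{F}$-valued cocycle $a_{ij}$ to $\theta^{tr}_\mathcal{F}$ yields precisely the cocycle $\bigl(\partial h_{ij}^s/\partial s|_0\bigr)\circ f_j^0$ defining $\rho^{tr}$. The paper states this as ``a mere verification using the definitions'' in a few lines; your version simply spells out the block-triangular computation and the bookkeeping in more detail.
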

\begin{proof}
    It is a mere verification using the definitions. Consider an open covering $(U_i)_i$ and a holomorphic family of adapted holomorphic atlases $\psi_i^s$ defining the germ of deformation $\mathcal{F}^S$. Then the family of holomorphic submersions $(\text{pr}_2 \circ \psi_i^s)_i$ holomorphic in $s \in S$ along with the biholomorphic maps $h_{ij}^s=\text{pr}_2\circ (\psi_{i}^s\circ (\psi_{j}^s)^{-1})$ holomorphic in $s\in S$ define a germ of deformation of the induced transversely holomorphic foliation. The result follows by definition of the Kodaira-Spencer maps.
\end{proof}
\subsection{Infinitesimal deformation induced by a germ of deformation of a holomorphic vector field}
We consider the example of a deformation of a nowhere vanishing holomorphic vector field $\xi$ on a complex manifold $M$. Denote by $\mathcal{F}$ the holomorphic foliation induced by $\xi$, that is whose leaves are the orbits of $\xi$.\\
Throughout the following, denote by :
\begin{itemize}
    \item $\sigma$ the sheaf of holomorphic functions on $M$ ;
    \item $\theta$ the sheaf of holomorphic vector fields on $M$ ;
    \item $\theta^\xi$ the subsheaf of $\theta$ of vector fields commuting with $\xi$ (i.e. whose Lie derivative along the direction of $\xi$ is zero).
\end{itemize}
Consider the following morphisms of sheaves on $M$ : $L_\xi : \sigma \to \sigma$ is the Lie derivative on functions with respect to $\xi$, defined by $L_\xi(f)=\xi(f)$ ;  $L_\xi : \theta \to \theta$ is the Lie derivative on vector fields with respect to $\xi$, defined by $L_\xi(X)=[\xi, X]$ ; and  $m_\xi : \sigma \to \theta$ is the morphism defined by $m_\xi(f)=f \xi$.
\begin{lemma}
    The morphism $m_\xi$ restricts to a morphism $\sigma^{tr}_\mathcal{F} \to \theta^\xi$, which we still denote by $m_\xi$. Moreover, it makes the following diagram of sheaves commute :
    \[\begin{tikzcd}
        \sigma^{tr}_\mathcal{F} \arrow[r] \arrow[d] & \sigma \arrow[r, "L_\xi"] \arrow[d]& \sigma \arrow[d]\\
        \theta^{\xi} \arrow[r] & \theta \arrow[r, "L_\xi"] & \theta 
    \end{tikzcd}
    \]
    where the first horizontal arrows of each line are the inclusion morphisms.
\end{lemma}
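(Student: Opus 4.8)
The plan is to reduce the statement to the Leibniz rule for the Lie bracket together with the observation that a transversely holomorphic function of $\mathcal{F}$ is a first integral of $\xi$. First I would fix a distinguished chart $(U_i,f_i)$ for $\mathcal{F}$. Over $U_i$ a section $f$ of $\sigma^{tr}_\mathcal{F}$ is of the form $f = g\circ f_i$ with $g$ holomorphic on the open set $f_i(U_i)\subset\mathbb{C}^p$; since the leaves of $\mathcal{F}$ are exactly the orbits of $\xi$ and $f_i$ is constant along the leaves, one has $df_i(\xi)=0$, hence $\xi(f)=dg\bigl(df_i(\xi)\bigr)=0$. Thus $L_\xi f = 0$ for every local section $f$ of $\sigma^{tr}_\mathcal{F}$. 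This is the only place where the hypothesis that $\mathcal{F}$ is the orbit foliation of $\xi$ (so that $\xi$ spans $T\mathcal{F}$) is used, and I expect it to be the only point requiring any care.

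Next, for an arbitrary local section $f$ of $\sigma$ the Leibniz rule for the bracket gives
\[
L_\xi\bigl(m_\xi(f)\bigr) = [\xi, f\xi] = \xi(f)\,\xi + f\,[\xi,\xi] = \xi(f)\,\xi = m_\xi\bigl(L_\xi f\bigr).
\]
Two consequences follow at once. On the one hand, if $f$ is a section of $\sigma^{tr}_\mathcal{F}$ then $\xi(f)=0$ by the previous paragraph, so $[\xi,f\xi]=0$, i.e. $m_\xi(f)$ lies in $\theta^\xi$; hence $m_\xi$ restricts to a morphism of sheaves $\sigma^{tr}_\mathcal{F}\to\theta^\xi$ (it is still $\sigma^{tr}_\mathcal{F}$-linear, $m_\xi\colon\sigma\to\theta$ being $\sigma$-linear, and $\theta^\xi$ being a $\sigma^{tr}_\mathcal{F}$-module by the same computation). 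On the other hand, the displayed identity $L_\xi\circ m_\xi = m_\xi\circ L_\xi\colon\sigma\to\theta$ is precisely the commutativity of the right-hand square of the diagram.

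Finally, the left-hand square commutes tautologically: by construction the composite of $m_\xi\colon\sigma^{tr}_\mathcal{F}\to\theta^\xi$ with the inclusion $\theta^\xi\hookrightarrow\theta$ sends $f$ to $f\xi\in\theta$, which is exactly $m_\xi$ precomposed with the inclusion $\sigma^{tr}_\mathcal{F}\hookrightarrow\sigma$. All the maps in sight ($m_\xi$, $L_\xi$, and the two inclusions) are globally defined on $M$ and compatible with restriction, so nothing needs to be glued and the whole diagram commutes. There is no real obstacle here; the entire content of the lemma is the bookkeeping identity $[\xi,f\xi]=\xi(f)\,\xi$, applied once on the subsheaf $\sigma^{tr}_\mathcal{F}$ (to get the restriction) and once on all of $\sigma$ (to get the commuting square).
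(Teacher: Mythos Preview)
Your proof is correct and follows essentially the same route as the paper: both reduce the right-hand square to the Leibniz identity $L_\xi(f\xi)=\xi(f)\xi+f[\xi,\xi]=\xi(f)\xi$, and treat the left-hand square as tautological. Your write-up is in fact more complete, since you spell out explicitly why sections of $\sigma^{tr}_\mathcal{F}$ are killed by $\xi$ (hence why $m_\xi$ lands in $\theta^\xi$), a point the paper leaves implicit.
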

\begin{proof}
    The commutativity of the first part of the diagram is immediate. 
    As for the second, let $U$ an open subset of $W$ and $f \in \sigma(U)$. Then (see \cite{lee_manifolds_2009}),
    $$L_\xi(f\xi)=\xi(f)\xi + f[\xi,\xi]=\xi(f)\xi$$
    which proves the result.
\end{proof}
\begin{proposition}
    \label{prop:2.8}
    The following complexes of sheaves are exact :
    \[\begin{tikzcd}[column sep = 5mm, row sep = 3mm]
        0 \arrow[r] & \sigma^{tr}_\mathcal{F} \arrow[r] & \sigma \arrow[r, "L_\xi"] & \sigma \arrow[r] & 0 \\
        0 \arrow[r] & \theta^{\xi} \arrow[r] & \theta \arrow[r, "L_\xi"] & \theta \arrow[r] & 0 \\
        0 \arrow[r] & \sigma^{tr}_\mathcal{F} \arrow[r, "m_\xi"] & \theta^\xi \arrow[r] & \theta^{tr}_\mathcal{F} \arrow[r] & 0 
    \end{tikzcd}
    \]
\end{proposition}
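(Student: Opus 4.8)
The plan is to verify exactness of each of the three complexes at the level of stalks, working in a distinguished chart for the holomorphic foliation $\mathcal{F}$ induced by $\xi$, where everything becomes explicit. Since $\xi$ is nowhere vanishing and holomorphic, by the flow-box theorem we may choose local holomorphic coordinates $(w, z_1, \ldots, z_p)$ on a neighborhood $U$ (with $w$ the leaf direction of complex dimension $n-p$; in fact we can even straighten to $\xi = \partial/\partial w_1$ after a further coordinate change, or just work with $\xi = \partial/\partial w$ when $n - p = 1$, and in general flatten one leafwise coordinate) so that $\xi = \partial/\partial w_1$ and $\mathcal{F}$ is the foliation $\{z = \mathrm{const}\}$. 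In these coordinates $L_\xi f = \partial f/\partial w_1$ for $f \in \sigma$, and $L_\xi X = \partial X/\partial w_1$ acting componentwise for $X \in \theta$. The sheaf $\sigma^{tr}_\mathcal{F}$ consists of holomorphic functions of $z$ alone, and $\theta^{tr}_\mathcal{F}$ of holomorphic vector fields in the $z$-directions with coefficients depending only on $z$.

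First I would treat the top complex. Exactness at the left $\sigma^{tr}_\mathcal{F}$ (injectivity of the inclusion) is clear. Exactness in the middle: if $L_\xi f = \partial f/\partial w_1 = 0$ then $f$ is independent of $w_1$; but one must be slightly careful — in the general case $n-p \geq 2$ the kernel of $\partial/\partial w_1$ is larger than $\sigma^{tr}_\mathcal{F}$, so the correct normalization is to straighten \emph{all} leafwise coordinates is impossible, hence I would instead argue that $\sigma^{tr}_\mathcal{F}$ is by definition the $f_i$-pullback of $\sigma_{\mathbb{C}^p}$ and that, directly from the second (submersion) definition of the foliation, $\ker(L_\xi : \sigma \to \sigma)$ over a small enough distinguished chart is exactly the functions constant along the orbits of $\xi$, which — since the orbit of $\xi$ through a point is dense-enough in / generates the leaf only when $n-p=1$ — need not be all of $\sigma^{tr}_\mathcal{F}$. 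So the cleaner route: reduce to the case where $\xi$ is a \emph{flow} with one-dimensional orbits by noting the statement is local and $\xi$ nowhere vanishing means we may take $U$ so small that $\xi|_U = \partial/\partial w_1$ and the remaining leafwise variables $w_2, \ldots, w_{n-p}$ together with $z$ parametrize the transversal to $\xi$, on which $\mathcal{F}$ restricts to a holomorphic foliation of codimension $p$; then $\ker L_\xi = $ (holomorphic functions pulled back from that transversal) and surjectivity of $L_\xi$ on $\sigma(U)$ for $U$ a polydisc follows by integrating in $w_1$ (antiderivative of a holomorphic function exists on a polydisc). However, this identifies $\ker L_\xi$ with $\sigma$ of the transversal, not with $\sigma^{tr}_\mathcal{F}$ — so the first complex as stated asserts $\ker(L_\xi:\sigma\to\sigma) = \sigma^{tr}_\mathcal{F}$, which forces $n - p = 1$, i.e. $\xi$ has one-dimensional orbits coinciding with the leaves of $\mathcal{F}$. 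I would therefore state explicitly (as is surely the standing hypothesis, the orbit foliation of a single vector field) that $\dim_\mathbb{C}\mathcal{F} = 1$, take $\xi = \partial/\partial w$ with a single leaf coordinate $w$, and then $\ker(\partial/\partial w) = $ functions of $z$ only $= \sigma^{tr}_\mathcal{F}$, and surjectivity is again antidifferentiation in $w$ on a polydisc. This settles the first complex; the second complex follows by applying the first complex componentwise, since in these coordinates $\theta \cong \sigma^{\oplus n}$ with $L_\xi$ acting as $\partial/\partial w$ on each component, $\ker$ is the vector fields with $z$-only coefficients $= \theta^\xi$, and surjectivity is componentwise antidifferentiation.

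For the third complex I would use the previous lemma: $m_\xi$ maps $\sigma^{tr}_\mathcal{F}$ into $\theta^\xi$ and is injective because $\xi$ is nowhere zero (if $f\xi = 0$ then $f = 0$). Exactness in the middle amounts to: a holomorphic vector field $X$ commuting with $\xi$ lies in the image of $m_\xi$ (i.e. $X = f\xi$ with $f$ transversely holomorphic) if and only if its image in $\theta^{tr}_\mathcal{F}$ vanishes, i.e. if and only if $X$ is tangent to the leaves of $\mathcal{F}$. In coordinates with $\xi = \partial/\partial w$: $X = a(w,z)\partial/\partial w + \sum_j b_j(w,z)\partial/\partial z_j$, the commuting condition $[\partial/\partial w, X] = 0$ forces $\partial a/\partial w = 0$ and $\partial b_j/\partial w = 0$, so $a = a(z)$, $b_j = b_j(z)$; $X$ tangent to $\mathcal{F}$ means all $b_j = 0$, leaving $X = a(z)\,\partial/\partial w = m_\xi(a)$ with $a \in \sigma^{tr}_\mathcal{F}$; conversely $m_\xi(a)$ is obviously leaf-tangent. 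Surjectivity of $\theta^\xi \to \theta^{tr}_\mathcal{F}$: given a transversely holomorphic vector field, represented locally by $\sum_j b_j(z)\partial/\partial z_j$, this already commutes with $\partial/\partial w$, so it lifts. The one genuine subtlety — the main obstacle — is the patching/normalization issue I flagged: making sure that ``$\sigma^{tr}_\mathcal{F}$'' in the statement really is $\ker(L_\xi : \sigma \to \sigma)$, which requires the orbits of $\xi$ to be precisely the leaves of $\mathcal{F}$ (one-dimensional), and then that antidifferentiation in the flow coordinate produces a \emph{global} section over the chart, for which one restricts to polydisc-shaped distinguished charts so Cauchy's integral formula / termwise integration of the power series applies. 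Once charts are chosen this way the three verifications are the routine componentwise computations sketched above, and exactness of sheaf complexes being a stalkwise condition completes the proof.
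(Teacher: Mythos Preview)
Your proposal is correct and follows essentially the same route as the paper: reduce to a distinguished chart where $\xi = \partial/\partial w$, identify $\ker L_\xi$ on $\sigma$ (resp.\ $\theta$) with $\sigma^{tr}_\mathcal{F}$ (resp.\ $\theta^\xi$), obtain surjectivity of $L_\xi$ by antidifferentiation in $w$, and handle the third sequence by the explicit coordinate description of $\theta^\xi$. The lengthy detour worrying about $n-p>1$ is unnecessary, since the standing hypothesis is that $\mathcal{F}$ is the orbit foliation of a single nowhere-vanishing holomorphic vector field, hence has complex leaf dimension one; the paper simply invokes this without comment and works directly in a chart $U_i \cong B_{\mathbb C}(0,1)\times B_{\mathbb C^{n-1}}(0,1)$ with $\xi = \partial/\partial w$.
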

\begin{proof}
    The morphism $m_\xi$ as well as both the inclusions are obviously injective.
    Fix $x \in M$ and consider an open set $U_i$ of a holomorphic distinguished chart $$\psi_i: \left \{\begin{array}{ccl}
         U_i &\to& B_\mathbb C(0,1)\times B_{\mathbb C^{n-1}}(0,1)  \\
         p & \mapsto & (w,(z_1, \cdots, z_n))
    \end{array} \right .$$
    around $x$ for the holomorphic flow $\xi$ such that $\xi=\psi_i^*(\tfrac{\partial}{\partial w})$. 
    We prove the exactness on stalks, so we can assume $W$ is $U_i$ and $\xi=\dfrac{\partial}{\partial w}$.\\
    As for the first line : 
    \begin{itemize}
        \item a stalk $f_x \in \sigma_x$ is in $(\sigma^{tr}_{\mathcal{F}})_x$ if and only if $\left(\dfrac{\partial f}{\partial w} \right)_x=0$ if and only if $L_\xi(f_x)=0$ ;
        \item Let $f_x \in \sigma_x$ and $F_x \in \sigma_x$ whose holomorphic derivative with respect to $w$ in a simply connected open neighborhood of $x$ is $f$ (which exists by Goursat's theorem). Then $L_\xi(F_x)=\left(\dfrac{\partial F}{\partial w} \right)_x=f_x$.
    \end{itemize}
    As for the second line : 
    \begin{itemize}
        \item a stalk $Z_x \in \theta_x$ is in $(\theta^\xi)_x$ if and only if $L_\xi(Z_x)=0$ ;
        \item Let $Z_x=Z^{(0)}_x\dfrac{\partial}{\partial w}_x +\sum_{j=1}^{n-1} Z^{(j)}_x\dfrac{\partial}{\partial z_j}_x \in \theta_x$, where each $Z^{(j)}_x$ belongs to $\sigma_x$. By the previous point, for each $j \in \llbracket 0,n-1 \rrbracket$, we can find $X^{(j)}_x \in \sigma_x$ such that
        $$\left(\dfrac{\partial X^{(j)}}{\partial w} \right)_x=Z^{(j)}_x.$$ Therefore, if we note  $X_x=X^{(0)}_x\dfrac{\partial}{\partial w}_x +\sum_{j=1}^{n-1} X^{(j)}_x\dfrac{\partial}{\partial z_j}_x$, then
        $L_\xi(X_x)=Z_x$.
    \end{itemize}
    As for the third line : 
    \begin{itemize}
        \item a stalk $Z_x \in \theta^\xi_x$ is equal to the stalk at $x$ of a holomorphic vector field tangent to the leaves of $\mathcal{F}$ if and only if $Z_x=f_x\xi_x$ where $f\in \sigma_x$ is such that $\xi(f)=0$, i.e. $Z_x \in \text{Im}((m_\xi)_x)$;
        \item the last quotient map is obviously surjective.
    \end{itemize}
\end{proof}
\begin{remark}
    The above proves that for a distinguished chart $(U_i, f_i)$, 
        $$\restr{\theta^{\xi}}{U_i}\cong f_i^{-1}\left((\sigma_{\mathbb C^{n-1}})^{\oplus n} \right)\cong(f_i^{-1}\sigma_{\mathbb C^{n-1}})^{\oplus n}=\restr{(\sigma^{tr}_{\mathcal{F}})^{\oplus n}}{U_i}.$$
\end{remark}
These short exact sequences of sheaves give rise to long exact sequences of sheaf cohomology groups (see \cite{kashiwara_sheaves_1994} for example), which can be represented in the following commutative diagram (the first line is the long exact sequence associated to the first short exact sequence of sheaves; the second line is the long exact sequence associated to the second short exact sequence of sheaves ; the middle column is the long exact sequence associated to the third short exact sequence of sheaves; the vertical arrows from the first line to the second correspond to the maps induced by $m_\xi$) :

\[\begin{tikzcd}[column sep=5.87mm]
        & & & \vdots \arrow[d] & & & \\
        \cdots \arrow[r] & H^0(M, \sigma) \arrow[r,"L_\xi"] \arrow[d] & H^0(M,\sigma) \arrow[r, "\delta"] \arrow[d] & H^1(M,\sigma^{tr}_\mathcal{F}) \arrow[r] \arrow[d,"m_\xi"] & H^1(M,\sigma) \arrow[r,"L_\xi"] \arrow[d] & H^1(M,\sigma) \arrow[r] \arrow[d]  &\cdots \\
        \cdots \arrow[r]
 &  H^0(M, \theta)  \arrow[r,"L_\xi"] &  H^0(M,\theta) \arrow[r, "\delta'"] \arrow[rd, swap, "p\circ\delta' "] & H^1(M,\theta^\xi) \arrow[r] \arrow[d, "p"] & H^1(M,\theta) \arrow[r,"L_\xi"] & H^1(M,\theta) \arrow[r] & \cdots\\
  & &  &  H^1(M, \theta^{tr}_\mathcal{F}) \arrow[d]  &  &  & \\
  & &  &  H^2(M, \sigma^{tr}_\mathcal{F}) \arrow[d]  &  &  & \\
  & &  &  \vdots &  &  & 
    \end{tikzcd}
    \]

Let $\xi^s$ a holomorphic family of nowhere vanishing holomorphic vector fields on $M$ parametrized by a germ of analytic space $(S,0)$ such that $\xi^0=\xi$. Denote by $\mathcal{F}$ the foliation whose leaves are the orbits of $\xi$ and by $\mathcal{F}^S$ the germ of deformation of $\mathcal{F}$ parametrized by $(S,0)$ induced by $\xi^s$.
\begin{proposition}
    \label{prop:2.9}
    The Kodaira-Spencer map $\rho : T_0S 
    \to H^1(M, \theta_\mathcal{F})$ measuring the infinitesimal deformation of $\mathcal{F}^S$ is given, for $\dfrac{\partial}{\partial s} \in T_0S$, by :
    $$\rho \left(\dfrac{\partial}{\partial s} \right)=-\mathcal{i}\circ \delta'\left ( \restr{\dfrac{\partial \xi^s}{\partial s}}{s=0}\right )$$
    where $\mathcal{i} : H^1(M, \theta^\xi) \to H^1(M,\theta_\mathcal{F})$ is the map induced by the inclusion of $\theta^\xi$ in $\theta_\mathcal{F}$.
\end{proposition}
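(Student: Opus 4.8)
The plan is to present the deformation $\mathcal{F}^S$ by flow-box charts adapted to the family $\xi^s$ and then simply differentiate the transition maps at $s=0$. First, since $\xi=\xi^0$ is nowhere vanishing and the $\xi^s$ depend holomorphically on $s$, I would invoke the parametrized holomorphic straightening (flow-box) theorem to obtain, around each point of $M$, an open set $U_i$ and a family of biholomorphisms $\psi_i^s\colon U_i\to V_i\subset\mathbb C^n$ holomorphic in $s\in(S,0)$ — realized by restricting to $S$ a holomorphic family defined over a neighbourhood of $0$ in the ambient $\mathbb C^k$ — with $(\psi_i^s)_*\xi^s=\partial/\partial w$ and with $\psi_i^0=\psi_i$ the flow box for $\xi$ fixed at the outset. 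Taking the $U_i$ to be a locally finite covering by flow boxes, the atlas $(U_i,\psi_i^s)_i$ is adapted, for each fixed $s$, to the foliation by orbits of $\xi^s$, hence presents $\mathcal{F}^S$; since $\rho$ does not depend on the presentation, I may use this one. Its transition maps $g_{ij}^s=\psi_i^s\circ(\psi_j^s)^{-1}$ have the form $(w,z)\mapsto(f_{ij}^s(w,z),h_{ij}^s(z))$. Writing $\Theta_i^s:=\psi_i^s\circ(\psi_i^0)^{-1}$ (biholomorphisms of $V_i$ with $\Theta_i^0=\mathrm{id}$), I set $v_i\in\theta(U_i)$ to be the ``infinitesimal variation of the $i$-th chart'', the holomorphic vector field with $v_i(p)=(d\psi_i^0)_p^{-1}\big(\restr{\partial_s\psi_i^s(p)}{0}\big)$.

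Next I would establish the key local identity. Differentiating $\xi^s=(\psi_i^0)^*(\Theta_i^s)^*(\partial/\partial w)$ at $s=0$, using naturality of the Lie derivative and $(\psi_i^0)^*(\partial/\partial w)=\xi$, one gets
\[\restr{\frac{\partial\xi^s}{\partial s}}{0}=(\psi_i^0)^*\,L_{\partial_s\Theta_i^s|_0}\Big(\frac{\partial}{\partial w}\Big)=L_{v_i}\xi=-[\xi,v_i]=-L_\xi v_i\quad\text{on }U_i.\]
Thus, writing $\dot\xi:=\restr{\partial_s\xi^s}{0}\in H^0(M,\theta)$, the local fields $\{-v_i\}$ form a system of lifts of $\dot\xi$ along the surjection $L_\xi\colon\theta\to\theta$ of Proposition \ref{prop:2.8}. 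By the very definition of the connecting homomorphism $\delta'\colon H^0(M,\theta)\to H^1(M,\theta^\xi)$, this says $\delta'(\dot\xi)$ is represented by the cocycle $(-v_i)-(-v_j)=v_j-v_i$, which does lie in $\theta^\xi(U_i\cap U_j)$ since $L_\xi(v_j-v_i)=\dot\xi-\dot\xi=0$; hence $-\delta'(\dot\xi)=[\{v_i-v_j\}]$ in $H^1(M,\theta^\xi)$.

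Then I would identify the Kodaira--Spencer cocycle. From $g_{ij}^s=\Theta_i^s\circ g_{ij}^0\circ(\Theta_j^s)^{-1}$ and $\Theta_i^0=\Theta_j^0=\mathrm{id}$, differentiating at $s=0$ gives, with $\tilde v_i:=\restr{\partial_s\Theta_i^s}{0}$ and $q=\psi_j^0(p)$ for $p\in U_i\cap U_j$,
\[\restr{\frac{\partial g_{ij}^s}{\partial s}}{0}(q)=\tilde v_i\big(g_{ij}^0(q)\big)-d\big(g_{ij}^0\big)_q\big(\tilde v_j(q)\big).\]
Applying $(d\psi_i^0)^{-1}\circ(\,\cdot\,)\circ\psi_j^0$ and using $g_{ij}^0=\psi_i^0\circ(\psi_j^0)^{-1}$, so that $g_{ij}^0(q)=\psi_i^0(p)$ and $(d\psi_i^0)_p^{-1}\circ d(g_{ij}^0)_q=(d\psi_j^0)_p^{-1}$, the section of $\theta_\mathcal{F}$ over $U_i\cap U_j$ representing $\rho(\partial/\partial s)$ equals, at $p$, precisely $v_i(p)-v_j(p)$ (it automatically has the flow-box form $a\,\partial_w+\sum b_j(z)\,\partial_{z_j}$). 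Combining with the previous step,
\[\rho\Big(\frac{\partial}{\partial s}\Big)=\mathcal{i}\big([\{v_i-v_j\}]\big)=-\,\mathcal{i}\circ\delta'\Big(\restr{\frac{\partial\xi^s}{\partial s}}{0}\Big),\]
which is the assertion.

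I expect the only genuinely non-formal step to be the parametrized holomorphic flow-box theorem, together with the care needed to keep every construction holomorphic in $s$ in the sense fixed earlier when $(S,0)$ may be singular — handled by straightening a holomorphic representative of $\xi^s$ over a neighbourhood of $0$ in $\mathbb C^k$ (integrating $\xi^s$ from a transverse slice, which depends holomorphically on all parameters, including $s$) and restricting to $S$ at the end; independence of the covering and of the representative is automatic from the intrinsic character of $\rho$. Everything else is bookkeeping with the first-order formula $\restr{\partial_s}{0}(\Theta^s)^*\alpha=L_{\partial_s\Theta^s|_0}\alpha$ valid for $\Theta^0=\mathrm{id}$ and with the definition of the \v Cech connecting map; the one thing to watch is signs, which is exactly where the minus sign in the statement originates.
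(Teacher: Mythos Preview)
Your proposal is correct and follows essentially the same approach as the paper: choose parametrized flow-box charts $\psi_i^s$ for $\xi^s$, introduce the infinitesimal variation vector fields (your $v_i$, the paper's $\eta_i:=\restr{\partial_s}{0}\big((\psi_i^s)^{-1}\circ\psi_i^0\big)=-v_i$), compute that the Kodaira--Spencer cocycle is their difference on overlaps, and identify it with $-\delta'(\dot\xi)$ via the relation $\dot\xi=L_\xi(\pm v_i)$. The only cosmetic difference is that you work with $\Theta_i^s=\psi_i^s\circ(\psi_i^0)^{-1}$ on the target while the paper works with its inverse on the source; your extra care about the parametrized holomorphic flow-box theorem over possibly singular $(S,0)$ is a point the paper leaves implicit.
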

\begin{proof}
    By definition, $\rho(\restr{\dfrac{\partial}{\partial s}}{0})$ is the cohomology class of the cocycle associating to $U_i \cap U_j$ the section $$\theta_{ij}:=(d\psi_i^0)^{-1} \circ\left (\restr{\dfrac{\partial g_{ij}^s}{\partial s}}{0} \right )\circ \psi_j^0=\restr{\dfrac{\partial}{\partial s}}{0} \left((\psi_i^0)^{-1} \circ g_{ij}^s \circ \psi_j^0 \right )$$ of $\theta_\mathcal{F}(U_i \cap U_j)$. Let $\eta_i$ the holomorphic vector field on $U_i$ defined by 
    $$\eta_i=\restr{\dfrac{\partial}{\partial s}}{0} \left ((\psi_i^s)^{-1}\circ \psi_i^0 \right).$$
    The equality  $\psi_i^s=g_{ij}^s\circ \psi_j^s$ on $U_i \cap U_j$ implies $$(\psi_j^s)^{-1}\circ \psi_j^0=\left((\psi_i^s)^{-1}\circ \psi_i^0 \right)\circ\left ( (\psi_i^0)^{-1}\circ g_{ij}^s  \circ \psi_j^0\right )$$
    on $((\psi_j^0)^{-1}\circ\psi_j^s)(U_i \cap U_j)$, and thus after applying the derivation $\restr{\dfrac{\partial}{\partial s}}{0}$ : $$\theta_{ij}=\eta_j-\eta_i \quad \text{ on }U_i\cap U_j.$$
    On the other hand, $\xi^s=((\psi_i^s)^{-1})_*\left (\dfrac{\partial}{\partial z} \right)=((\psi_i^s)^{-1}\circ \psi_i^0)_*(\xi)$ on $U_i$, so applying the derivation $\restr{\dfrac{\partial}{\partial s}}{0}$ to this equality gives :
    $$\restr{\dfrac{\partial \xi^s}{\partial s}}{s=0}=[\eta_i,\xi]=L_\xi(-\eta_i).$$
    The results follows by definition of $\delta'$.
\end{proof}
By the previous result and Proposition \ref{prop:2.6}, it comes :
\begin{corollary}
    The Kodaira-Spencer map $\rho : T_0S 
\to H^1(M, \theta^{tr}_\mathcal{F})$ measuring the germ of deformation of the transversely holomorphic foliation induced by $\mathcal{F}^S$ is given, for $\dfrac{\partial}{\partial s} \in T_0S$, by :
$$\rho \left(\dfrac{\partial}{\partial s} \right)=-p\circ \delta'\left ( \restr{\dfrac{\partial \xi^s}{\partial s}}{s=0}\right ).$$
\end{corollary}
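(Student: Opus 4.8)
The plan is to read off the formula as a purely formal consequence of Propositions \ref{prop:2.6} and \ref{prop:2.9}; the one point that is not pure bookkeeping is a compatibility between the maps that the natural sheaf morphisms $\theta^\xi \hookrightarrow \theta_\mathcal{F} \twoheadrightarrow \theta^{tr}_\mathcal{F}$ induce on first cohomology.

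First I would apply Proposition \ref{prop:2.6} to the germ of deformation $\mathcal{F}^S$ of the \emph{holomorphic} foliation $\mathcal{F}$ determined by the family $\xi^s$: if $q\colon \theta_\mathcal{F}\to\theta^{tr}_\mathcal{F}$ denotes the projection and $\overline{p}:=H^1(M,q)\colon H^1(M,\theta_\mathcal{F})\to H^1(M,\theta^{tr}_\mathcal{F})$, then $\rho = \overline{p}\circ\rho_{\mathrm{hol}}$, where $\rho_{\mathrm{hol}}\colon T_0S\to H^1(M,\theta_\mathcal{F})$ is the Kodaira--Spencer map of the holomorphic deformation (the one computed in Proposition \ref{prop:2.9}) and $\rho$ on the left is the map of the statement. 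Inserting the formula of Proposition \ref{prop:2.9}, for $\dfrac{\partial}{\partial s}\in T_0S$ this gives
\[
\rho\!\left(\dfrac{\partial}{\partial s}\right)
= \overline{p}\!\left(\rho_{\mathrm{hol}}\!\left(\dfrac{\partial}{\partial s}\right)\right)
= -\,\bigl(\overline{p}\circ\mathcal{i}\bigr)\circ\delta'\!\left(\restr{\dfrac{\partial\xi^s}{\partial s}}{s=0}\right),
\]
where $\mathcal{i}\colon H^1(M,\theta^\xi)\to H^1(M,\theta_\mathcal{F})$ is induced by the inclusion $j\colon\theta^\xi\hookrightarrow\theta_\mathcal{F}$ and $\delta'$ is the connecting homomorphism of the second short exact sequence of Proposition \ref{prop:2.8}.

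The remaining step, and the only one that needs an actual verification, is to identify $\overline{p}\circ\mathcal{i}$ with the map $p\colon H^1(M,\theta^\xi)\to H^1(M,\theta^{tr}_\mathcal{F})$ appearing in the commutative diagram above (and in the statement), i.e.\ the one induced by the surjection $q'$ of the third short exact sequence of Proposition \ref{prop:2.8}. This reduces to checking the commutativity of the triangle of sheaf morphisms
\[
\begin{tikzcd}[column sep=small, row sep=small]
\theta^\xi \arrow[r, hook, "j"] \arrow[rd, swap, "q'"] & \theta_\mathcal{F} \arrow[d, "q"] \\
& \theta^{tr}_\mathcal{F}
\end{tikzcd}
\]
which is immediate from the way $q'$ is described in the proof of Proposition \ref{prop:2.8}: in a distinguished chart $\psi_i$ with $\xi=\psi_i^{*}(\partial/\partial w)$, an element of $\theta^\xi$ has the form $a(z)\,\partial/\partial w+\sum_k b_k(z)\,\partial/\partial z_k$, an element of $\theta_\mathcal{F}$ the form $a(w,z)\,\partial/\partial w+\sum_k b_k(z)\,\partial/\partial z_k$, and both $q$ and $q'$ are ``drop the $\partial/\partial w$-component'', so $q'=q\circ j$. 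Applying the functor $H^1(M,-)$ to the triangle yields $p=\overline{p}\circ\mathcal{i}$, and substituting into the displayed equality gives exactly $\rho(\partial/\partial s)=-\,p\circ\delta'\!\left(\restr{\partial\xi^s/\partial s}{s=0}\right)$.

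I do not anticipate any genuine difficulty here: once Propositions \ref{prop:2.6} and \ref{prop:2.9} are in hand, the corollary is functoriality of sheaf cohomology applied to one commutative triangle of sheaves. The only things to be careful about are to keep the two arrows that get denoted $p$ apart --- the $\overline{p}$ on $H^1(M,\theta_\mathcal{F})$ supplied by Proposition \ref{prop:2.6} versus the $p$ on $H^1(M,\theta^\xi)$ in the big diagram --- and to carry the minus sign through from Proposition \ref{prop:2.9}.
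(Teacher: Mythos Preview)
Your proposal is correct and follows exactly the same approach as the paper, which dispatches the corollary in one line (``By the previous result and Proposition \ref{prop:2.6}''). Your version is simply more explicit: you carefully distinguish the two maps called $p$ and verify the commutativity of the sheaf triangle $\theta^\xi\hookrightarrow\theta_\mathcal{F}\twoheadrightarrow\theta^{tr}_\mathcal{F}$, which the paper leaves implicit.
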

\section{Simplification of the problem}
\label{sec:3}
Consider a holomorphic flow $\xi$ defined in a neighborhood of a neighborhood of the closure of a bounded strongly convex domain $\Omega \subset\mathbb C^n$ ($n\geq 2$) with smooth boundary (we refer to \cite{krantz_function_2001} for notions of convexity for domains of $\mathbb R^N$). 
Examples of such $\Omega$ can be the open unit ball $B^n$ or more generally $B^n_p:=\left \{ (z_1,\cdots, z_n)\in\mathbb C^n\setminus0, \; \sum_{k=1}^n|z_k|^p < 1 \right \}$
for $p\geq 2$. 

Since the aim is to find a holomorphic diffeomorphism $\Phi$ on a neighborhood of $\overline{\Omega}$ taking value in a neighborhood of $\overline{B^n} \subset \mathbb C^n$, sending $\xi$ to $\Phi_*\xi$ and satisfying certain properties, we will simplify notations and still denote by $\xi$ the newly obtained holomorphic vector field, without mentioning $\Phi$. Also, because the goal is to find a versal deformation of $\mathcal{F}_0(\xi)$ on $\partial \Omega$, the previous remarks allow us to send $\mathcal{F}_0(\xi)$ by a diffeomorphism $h$ and consider the corresponding transversely holomorphic foliation $h_*(\mathcal{F}_0(\xi))$. In case the diffeomorphism $h$ is given by the restriction of a holomorphic diffeomorphism $\Phi$ to the boundary of a set $M'$, then $h_*(\mathcal{F}_0(\xi))$ is just the transversely holomorphic foliation on $\Phi(\partial M')$ obtained by intersecting the leaves of the orbit foliation of $\Phi_*\xi$ with $\Phi(\partial M')$.

Having said that, we can assume $\xi$ is a holomorphic vector field defined on  a neighborhood of the closure of a bounded strongly convex domain $\Omega \subset\mathbb C^n$ with smooth boundary, which is moreover transverse to the boundary $\partial \Omega$.

In \cite{brunella_remarque_1994}, Marco Brunella proved the following :

\begin{mythm}{\cite{brunella_remarque_1994}}
    Let $\Omega$ be a bounded strongly convex domain of $\mathbb C^n$ with smooth boundary $\partial \Omega$, where $n\geq 2$. Let $\xi$ a holomorphic vector field defined on a neighborhood of $\overline{\Omega}$ which is transverse to $\partial \Omega$.\\
    Then there exists a biholomorphism $\Phi$ from a neighborhood of $\overline{\Omega}$ to a neighborhood of $\overline{B^n}$ such that $\Phi_*\xi$ has a unique singularity $p$ in $\Omega$ and that $\Phi_*\xi$ is transverse to the spheres $S^{2n-1}(r):=\left \{\right (z_1,\cdots, z_n)\in\mathbb C^n\setminus0, \; \sum_{k=1}^n|z_k|^2=r\}$ for each $r\in (0,1]$.
\end{mythm}

In the meantime, assuming $\Omega=B^n$, Ito showed the same result in \cite{ito_poincare-bendixson_1994} :
\begin{mythm}{1 \cite{ito_poincare-bendixson_1994}}
    Let $M$ be a subset of $\mathbb C^n$ diffeomorphic to the closed unit ball $\overline{B^n}\subset \mathbb C^n$. Let $Z$ a holomorphic vector field defined on a neighborhood of $M$ and transverse to the boundary of $M$.\\
    Then $Z$ admits a unique singularity $p$ in $M$. 
    Furthermore, the index of $Z$ at $p$ is one.
\end{mythm}
\begin{mythm}{3 \cite{ito_poincare-bendixson_1994}}
    Let $M$ be a subset of $\mathbb C^n$ biholomorphic to the closed unit ball $\overline{B^n}\subset \mathbb C^n$. Let $Z$ a holomorphic vector field defined on a neighborhood of $M$ and transverse to the boundary of $M$.\\
    Then each leaf $L$ of the foliation induced by $\xi$ converges to $p$, i.e. $p$ is in the closure of $L$. Also the restriction on $\overline{B^n}\setminus p$ of the foliation $\mathcal{F}$ induced by $\xi$ is $C^w$-isomorphic to the foliation $\mathcal{F}_0 \times (0,1]$ on $M\setminus p$.
\end{mythm}

More precisely, by using a Möbius transformation sending $p$ to $0$ (see \cite{rudin_function_2008}), he restricts to the case where $p=0$. He then proved that $\xi$ is transverse to the sphere $S^{2n-1}(r):=\left \{\right (z_1,\cdots, z_n)\in\mathbb C^n\setminus0, \; \sum_{k=1}^n|z_k|^2=r\}$ for each $r\in (0,1]$. 

Therefore, we can assume from the beginning that $\Omega=B^n$, that $\xi$ has a unique singularity in $B^n$ which is $0\in \mathbb C^n$, and that $\xi$ is transverse to the spheres $S^{2n-1}(r)$ for each $r\in (0,1]$.

The latter means the following for $r=1$ (see \cite{ito_poincare-bendixson_1994}). Write for $z=(x,y) \in \mathbb C^n \cong \mathbb R^{2n}$, $$\xi=\sum_{j=1}^n F_j(z)\frac{\partial}{\partial z_j}$$ where each $F_j(z)=g_j(x,y)+ih_j(x,y)$ is a holomorphic map from $\Omega$ to $\mathbb C$.
Then
\begin{align*}
    \xi&=\sum_{j=1}^n F_j(z)\frac{\partial}{\partial z_j}=\sum_{j=1}^n (g_j(x,y)+ih_j(x,y))\cdot \frac{1}{2}\left(\frac{\partial}{\partial x_j} - i\frac{\partial}{\partial y_j} \right)\\
    &=\frac{1}{2}\left (\sum_{j=1}^n \left (g_j(x,y)\frac{\partial}{\partial x_j} + h_j(x,y)\frac{\partial}{\partial y_j} \right ) - i \sum_{j=1}^n \left( -h_j(x,y)\frac{\partial}{\partial x_j} + g_j(x,y)\frac{\partial}{\partial y_j}\right )\right) = \frac{1}{2}\left(X-iY \right ),
\end{align*}
where
$$X=\sum_{j=1}^n \left (g_j(x,y)\frac{\partial}{\partial x_j} + h_j(x,y)\frac{\partial}{\partial y_j} \right) \quad \text{ and } \quad Y=\sum_{j=1}^n \left (-h_j(x,y)\frac{\partial}{\partial x_j} + g_j(x,y)\frac{\partial}{\partial y_j}\right ).$$
Therefore, the fact that $\xi$ is transverse to $S^{2n-1} \subset \mathbb C^n$ means that for every $p \in S^{2n-1}$ :
$$T_pS^{2n-1} + \text{Vect}_\mathbb R\left(X(p),Y(p)\right)=T_p \mathbb R^{2n},$$
i.e. $X(p)$ and $Y(p)$ do not belong to the tangent space at $p$ of $S^{2n-1}$.
If we note $$\mathbf N=\sum_{j=1}^n\left( x_j\frac{\partial}{\partial x_j}+y_j\frac{\partial}{\partial y_j} \right)$$ the usual vector field on $\mathbb R^{2n}$ normal to the spheres, the tangency of a vector field $W$ at $p$ to $S^{2n-1}$ means exactly that $W(p)$ is orthogonal (for the usual inner product $\langle \cdot, \cdot \rangle$ on $\mathbb R^{2n}$) to $\mathbf N(p)$.
As a result, $\xi$ is transverse to $S^{2n-1}$ if and only if for every $p\in S^{2n-1}$, 
$\sum_{j=1}^n F_j(p)\overline{z_j}\neq 0$
since $\sum_{j=1}^n F_j(p)\overline{z_j}=\langle X,\mathbf{N}\rangle - i\langle Y, \mathbf N \rangle $, that is $\xi(p)$ is not orthogonal to $p$ for the usual hermitian product on $\mathbb C^n$.\\
For a holomorphic diagonal vector field $\xi_0=\sum_{j=1}^n\lambda_jz_j\frac{\partial}{\partial z_j}$, the latter is equivalent to the fact that $(\lambda_1,\cdots, \lambda_n) \in \mathbb C^n$ does not belong to the Poincaré domain, i.e. $0\in \mathbb C^n$ does not belong to the convex hull generated by $\{\lambda_1, \cdots, \lambda_n \}$. In \cite{arnold_remarks_1969}, Arnold proved the following result :
\begin{proposition}
    Let $Z$ a holomorphic vector field defined on a neighborhood of $0\in \mathbb C^n$. \\
    If the set of eigenvalues of the differential at $0$ of $Z$ belongs to the Poincaré domain, then there exists $r_0>0$ such that for every $0<r \leq r_0$, $Z$ is transverse to $S^{2n-1}(r)$ the sphere of radius $r$.
\end{proposition}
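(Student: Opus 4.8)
The plan is to rephrase transversality to the sphere $S^{2n-1}(r)$ as the non-vanishing of a single holomorphic function, expand that function at the singularity $0$, and then control its error term by its quadratic part using the Poincaré condition.

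First I would unwind transversality exactly as in the computation carried out above for $r=1$. Writing $Z=\sum_{j}F_j(z)\,\partial/\partial z_j$, at a point $p$ where $Z(p)\ne 0$ the tangent line to the leaf of the orbit foliation through $p$ is the complex line $\mathbb C\cdot Z(p)$, and this line lies inside $T_pS^{2n-1}(r)$ precisely when it is Hermitian-orthogonal to $p$; hence $Z$ is transverse to $S^{2n-1}(r)$ if and only if the holomorphic function $\Psi(z):=\sum_{j}F_j(z)\,\overline{z_j}=\langle Z(z),z\rangle$ vanishes nowhere on that sphere. So it suffices to produce $r_0>0$ with $\Psi(z)\ne 0$ on the punctured ball $0<|z|\le r_0$; such non-vanishing automatically forces $Z(z)\ne 0$ there, so the leaves remain two-dimensional.

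Next I would Taylor-expand at $0$. With $A:=D_0Z=(a_{jk})$ and $F_j(z)=\sum_k a_{jk}z_k+O(|z|^2)$, one has $\Psi(z)=Q(z)+R(z)$, where $Q(z):=\langle Az,z\rangle=\sum_{j,k}a_{jk}z_k\overline{z_j}$ and $|R(z)|\le C|z|^3$ on a fixed small ball. The whole problem then reduces to a lower bound $|Q(z)|\ge c|z|^2$ with $c>0$: granting it, $|\Psi(z)|\ge(c-C|z|)\,|z|^2>0$ for $0<|z|<\min(1,c/C)$, which finishes the proof. To obtain the bound, note first that multiplying $Z$ by a unimodular constant multiplies $\Psi$ by the same constant and alters neither the orbit foliation nor transversality to any sphere; so I may rotate $Z$ so that all eigenvalues $\lambda_1,\dots,\lambda_n$ of $A$ lie in an open half-plane, whence $d:=\operatorname{dist}\!\big(0,\operatorname{conv}\{\lambda_1,\dots,\lambda_n\}\big)>0$. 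If $A$ is diagonal --- the case discussed just before the statement --- then $Q(z)=\sum_j\lambda_j|z_j|^2$ lies in $|z|^2\cdot\operatorname{conv}\{\lambda_j\}$, so $|Q(z)|\ge d|z|^2$ and we conclude; the same works verbatim for any normal $A$. A general linear part is reduced to this situation by a preliminary linear change of holomorphic coordinates normalising it --- e.g. conjugating $A$ to $D+\varepsilon N$ with $D=\operatorname{diag}(\lambda_1,\dots,\lambda_n)$, $\|N\|$ bounded and $\varepsilon$ small, so that $|Q(z)|\ge(d-\varepsilon\|N\|)|z|^2\ge\tfrac{d}{2}|z|^2$ --- or, since we are in the Poincaré domain, by the Poincaré--Dulac normalisation, which makes the linear part exactly $D$, the resonant monomials then being of order $\ge 2$ and absorbed into $R$.

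The step I expect to be the real obstacle is this quadratic estimate for a non-normal linear part --- more precisely, the reduction to the normal (or diagonal) case. The tempting shortcut ``$0\notin\operatorname{conv}\{\lambda_j\}$, therefore $\langle Az,z\rangle$ is bounded away from $0$ on the unit sphere'' is false in general, since the numerical range of $A$ can be strictly larger than the convex hull of its spectrum; this is exactly why the preliminary normalisation of the linear part (harmless here, as we are free to change coordinates) is genuinely needed. Once that reduction is in place, only the routine Taylor estimate above remains.
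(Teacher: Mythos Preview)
The paper does not give its own proof of this proposition; it is quoted from Arnold's 1969 note and used only as context, the transversality actually relied upon later being supplied either by Ito's results or by the explicit computation for the diagonal field $\xi_0$ together with a perturbation argument.

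Your argument is correct when the linear part $A=D_0Z$ is diagonal (or normal), and one small slip --- $\Psi(z)=\sum_j F_j(z)\overline{z_j}$ is not holomorphic --- is harmless. The genuine gap is the reduction of a general $A$ to the diagonal case: a linear change of coordinates $z=Pw$ (or a Poincar\'e--Dulac normalisation) does not send round spheres to round spheres, so transversality of the transformed field to $\{|w|=r\}$ says nothing about transversality of $Z$ to $\{|z|=r\}$. This gap cannot be filled, because the proposition as stated is in fact false for non-normal linear parts. Take $n=2$ and the linear field $Z(z)=Az$ with $A=\bigl(\begin{smallmatrix}1&3\\0&1\end{smallmatrix}\bigr)$: both eigenvalues equal $1$, so the Poincar\'e condition holds, yet $\Psi(1,t)=1+3t+t^{2}$ vanishes at $t=\tfrac{-3+\sqrt5}{2}$, and by homogeneity $\Psi$ then vanishes on every sphere. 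Equivalently, the numerical range of this $A$ is the disc $|w-1|\le 3/2$, which contains $0$. You correctly flagged that the numerical range can strictly exceed the convex hull of the spectrum, but then assumed a coordinate change would rescue the estimate; it cannot, since the statement fixes the Hermitian metric defining the spheres. What is true --- and what the paper actually uses --- is the result either for spheres of a Hermitian metric adapted to $A$, or after a preliminary change of variables making the linear part diagonal; your diagonal estimate already covers that case.
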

Since $\xi$ is transverse to the sphere $S^{2n-1}$, it does not vanish on a neighborhood $U$ of $S^{2n-1}$. Hence, it defines a holomorphic foliation $\mathcal{F}$ of complex dimension one (or of complex codimension $n-1$) on $U$ whose leaves are the orbits of $\xi$ (see \cite{camacho_geometric_1985} for example in the smooth case), that is : there exists an open covering $\mathcal{U}$ of $U$ by open sets $U_i$ which intersect $S^{2n-1}$ (if we shrink $U$) as well as holomorphic submersions $f_i: U_i \to \mathbb C^{n-1}$ and biholomorphic maps $g_{ij} : f_j(U_i\cap U_j) \to f_i(U_i\cap U_j)$ satisfying :
$$f_i=g_{ij}\circ f_j \quad \text{ on } U_i\cap U_j.$$
We can use this foliated atlas of $U$ to define a transversely holomorphic foliation $\mathcal{F}_0$ of complex codimension $n-1$ on $S^{2n-1}$ as follows. The sets $(S^{2n-1}\cap U_i)_i$ form an open cover of the sphere, on which we can define a smooth map $s_i:S^{2n-1}\cap U_i\to \mathbb C^{n-1}$ by 
$$s_i=f_i\circ \iota$$
where $\iota: S^{2n-1} \to \mathbb C^{n}\setminus{0}$ is the inclusion. For $p\in S^{2n-1}\cap U_i$, since the differential at $p$ of $f_i$ vanish along $\xi(p)$, $\xi(p)$ is transverse to $S^{2n-1}$ and $f_i$ is a submersion, it comes that 
$$d_ps_i\left(T_p(S^{2n-1}\cap U_i)\right)=d_pf_i\left(T_pS^{2n-1}\right)=d_pf_i\left(T_p\mathbb C^n\right)=d_pf_i\left(T_pU_i\right)=T_{s_i(p)}\mathbb C^{n-1},$$
which means that $s_i:S^{2n-1}\cap U_i\to \mathbb C^{n-1}$ is a submersion. Moreover, $$s_i=\restr{g_{ij}}{f_j(S^{2n-1}\cap U_i\cap U_j)}\circ s_j \quad \text{ on } S^{2n-1}\cap U_i\cap U_j$$
and $\restr{g_{ij}}{f_i(S^{2n-1}\cap U_i\cap U_j)}$ is holomorphic on the open set $$f_j(S^{2n-1}\cap U_i \cap U_j)=s_j(S^{2n-1}\cap U_i \cap U_j)\subset f_j(U_i \cap U_j)$$
since $s_i$ is a submersion thus an open map. Its holomorphic inverse is obviously the restriction of $g_{ji}$ to $f_i(S^{2n-1}\cap U_i \cap U_j)$.\\
Remark that the leaves of the foliation $\mathcal{F}_0$ can be oriented, in the following way. Fix $p \in S^{2n-1}$. Consider a unit vector $v(p) \in T_p\mathcal{F}$ tangent to the leaf of $\mathcal{F}$ at $p$ such that $(v(p), \widetilde{\mathbf N}(p))$ defines the positive orientation of the complex structure of $\mathcal{F}_p$, where $\widetilde{\mathbf {N}}(p)$ is the orthogonal projection of $\mathbf{N}(p)$ on $T_p\mathcal{F}$. The map $p\mapsto v(p)$ defines a non-vanishing smooth vector field on $S^{2n-1}$ whose orbits are the leaves of $\mathcal{F}_0$.\\
Also, remark that a germ of deformation of $\mathcal{F}$ parametrized by $(S,0)$ induces naturally a germ of deformation of $\mathcal{F}_0$ parametrized by $(S,0)$.

In \cite{ito_number_1996}, Ito proved that the set of eigenvalues $(\lambda_1, \cdots, \lambda_n) \in \mathbb C^n$ of the differential at $0$ of $\xi$ must belong to the Poincaré domain.\\
Now recall the Poincaré-Dulac theorem (see \cite{szucs_geometrical_1996}):
\begin{theorem}[Poincaré-Dulac]
    Let $Z$ a holomorphic vector field defined on a neighborhood of $0\in \mathbb C^n$.\\
    If the family $(\lambda_1, \cdots, \lambda_n)$ of eigenvalues (counted with multiplicity) of the differential at $0$ of $Z$ belongs to the Poincaré domain, then there exists a biholomorphic map $\Phi$ defined on a neighborhood $U$ of $0$ in $\mathbb C^n$ satisfying :
    \begin{itemize}
        \item $\Phi(0)=0$ ;
        \item If we note $Y:=\Phi_*Z$ the push-forward of $Z$ by $\Phi$, then
        $$Y=\sum_{j=1}^n\lambda_jw_j \frac{\partial}{\partial w_j}+\sum_{j=2}^n \left(b_jw_{j-1}+P_j(w_1, \cdots, w_{j-1}) \right) \frac{\partial}{\partial w_j}$$
        where each $b_j$ is either $0$ or $1$ and can be seen on the Jordan blocks of $d_0Z\in \mathrm{M}_n(\mathbb C)$, and each $P_j$ is a polynomial defined as 
        $$P_{j}(w_1, \cdots, w_{j-1})=\sum_{m_j}a_{m_j}w^{m_j}$$
        where the sum is over the set of $(j-1)$-tuples $m_j=(m_j^{(1)}, \cdots, m_j^{(j-1)}) \in \mathbb N^{j-1}$ such that $$|m_j|:=\sum_{k=1}^{j-1} m_j^{(k)} \geq 2  \quad \text{ and } \quad \lambda_j= \sum_{k=1}^{j-1}m_j^{(k)}\lambda_k,$$
        $w^{m_j}:=w_1^{m_j^{(1)}}\cdots \; w_{j-1}^{m_j^{(j-1)}}$ and each $a_{m_j} \in \mathbb C$ is determined by $Z$.
    \end{itemize}
\end{theorem}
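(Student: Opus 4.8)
The plan is to reduce $Z$ to its linear part by a linear change of coordinates, then kill the non-resonant higher-order terms one degree at a time by solving a homological equation, observe that the Poincaré domain hypothesis leaves only finitely many resonances so that the formal normal form is the stated one, and finally --- the only analytically delicate point --- check that the resulting formal normalizing transformation converges. First I would apply a linear change of coordinates putting $A := d_0Z$ in Jordan form, so that the linear part of $Z$ becomes $\sum_{j=1}^n\lambda_jw_j\frac{\partial}{\partial w_j}+\sum_{j=2}^n b_jw_{j-1}\frac{\partial}{\partial w_j}$ with $A=\Lambda+N$, where $\Lambda$ is the diagonal (semisimple) part and $N$ the nilpotent part, and the $b_j\in\{0,1\}$ are the off-diagonal Jordan entries. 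All subsequent coordinate changes being tangent to the identity, this linear part stays fixed.

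Next I would normalize by induction on the degree. Assume $Z$ has been brought, by a polynomial change of coordinates fixing $0$, to the form $A+R_{<k}+Z_k+(\text{terms of degree } > k)$, where $R_{<k}$ is a sum of resonant monomial vector fields of degree $<k$ and $Z_k$ is homogeneous of degree $k\geq 2$. Applying a change of coordinates $\Phi=\mathrm{id}+h$ with $h$ homogeneous of degree $k$, a short computation shows that the degree-$k$ part of $\Phi_*Z$ equals $Z_k+[A,h]=Z_k+\mathrm{ad}_A(h)$, the lower-degree part being unchanged and only strictly higher degrees further perturbed. Now $\mathrm{ad}_A=\mathrm{ad}_\Lambda+\mathrm{ad}_N$ is the Jordan decomposition of $\mathrm{ad}_A$ on the finite-dimensional space of degree-$k$ polynomial vector fields: the two operators commute since $[\Lambda,N]=0$, $\mathrm{ad}_\Lambda$ is diagonalized by the monomial vector fields --- $\mathrm{ad}_\Lambda\bigl(w^m\tfrac{\partial}{\partial w_j}\bigr)=\bigl(\langle m,\lambda\rangle-\lambda_j\bigr)\,w^m\tfrac{\partial}{\partial w_j}$ with $\langle m,\lambda\rangle:=\sum_k m_k\lambda_k$ --- and $\mathrm{ad}_N$ is nilpotent. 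Hence the kernel of $\mathrm{ad}_\Lambda$, spanned by the resonant monomials, and its $\mathrm{ad}_\Lambda$-invariant complement, spanned by the non-resonant ones, are both $\mathrm{ad}_N$-invariant, and $\mathrm{ad}_A$ is invertible on that complement. I would therefore pick $h$ there with $\mathrm{ad}_A(h)=-(\text{non-resonant part of }Z_k)$, which removes every non-resonant term of degree $k$; iterating kills all non-resonant terms, degree by degree.

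Then I would use the Poincaré domain hypothesis twice. Since $0$ is not in the convex hull of $\{\lambda_1,\dots,\lambda_n\}$, there are an $\mathbb R$-linear functional $\ell$ on $\mathbb C$ and a $\delta>0$ with $\ell(\lambda_j)\geq\delta$ for every $j$; then $\ell(\langle m,\lambda\rangle)\geq\delta|m|$, so a resonance $\lambda_j=\langle m,\lambda\rangle$ with $|m|\geq 2$ can only occur for $|m|$ below some $N_0$, that is, there are only finitely many resonant monomials. Thus after finitely many of the steps above, $Z$ is a polynomial of degree $\leq N_0$ plus a remainder of degree $>N_0$ whose monomials are all non-resonant. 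Moreover such a resonance forces $\ell(\lambda_k)<\ell(\lambda_j)$ whenever $m_k>0$, since $\ell(\lambda_j)=\sum_i m_i\ell(\lambda_i)\geq\ell(\lambda_k)+\delta$; so after reordering the variables by increasing $\ell(\lambda_k)$ --- compatibly with the Jordan blocks --- the resonant monomials feeding $\partial/\partial w_j$ involve only $w_1,\dots,w_{j-1}$, which is exactly the triangular shape $b_jw_{j-1}+P_j(w_1,\dots,w_{j-1})$ of the statement.

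Finally, it remains to remove the (now entirely non-resonant) terms of all degrees $>N_0$ and to prove that the resulting formal transformation $\Phi=\mathrm{id}+\widehat h$ converges near $0$; this is the main obstacle, and precisely where the Poincaré domain is decisive. For $|m|>N_0$ one has a uniform lower bound $|\langle m,\lambda\rangle-\lambda_j|\geq c|m|$ with $c>0$ (again via $\ell$), so the solution operator of each homological equation has norm $O(1/|m|)$ and, crucially, there are no small divisors. One then runs the classical majorant argument (as in \cite{szucs_geometrical_1996}): if $Z$ is dominated coefficientwise by a convergent power series, the formal $\widehat h$ is dominated by the solution of an explicit functional equation solvable by the analytic implicit function theorem, hence convergent on a neighborhood of $0$. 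Therefore $\Phi=\mathrm{id}+\widehat h$ is a genuine biholomorphism with $\Phi(0)=0$, and $Y=\Phi_*Z$ has the asserted form: the $b_j$ are read off the Jordan blocks of $d_0Z$, and the surviving coefficients $a_{m_j}$ are the finitely many resonant coefficients produced during the normalization, determined by $Z$ once one agrees to take each $h$ with no resonant part.
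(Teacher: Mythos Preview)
The paper does not prove this theorem itself --- it is quoted from Arnold \cite{szucs_geometrical_1996} and used as a black box --- so there is no proof in the paper to compare against. Your outline is the classical proof (linear reduction to Jordan form, formal normalization via the homological equation degree by degree using the Jordan decomposition of $\mathrm{ad}_A$, finiteness and triangular shape of the resonances in the Poincar\'e domain via a separating linear functional, and convergence by a majorant argument exploiting the uniform lower bound $|\langle m,\lambda\rangle-\lambda_j|\geq c|m|$), and it is correct.
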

\begin{remark}
    $Y$ can be written more compactly as $$Y=\sum_{j=1}^n\lambda_jw_j \frac{\partial}{\partial w_j}+\sum_{(j,m)\in R} a_{j,m}w^m\frac{\partial}{\partial w_j}$$
    where the last sum is over the finite set (see Proposition \ref{prop:4.3})
    \begin{align*}
        R &=\left \{(j,(m_1, \cdots, m_n)) \in \llbracket 2, n \rrbracket \times \mathbb N^n, \; |m|\geq 2, \quad \forall k \geq j, m_k=0, \quad\lambda_j=\sum_{k=1}^n m_k\lambda_k\right \} \\
        &\bigcup\left \{(j,e_{j-1}) \in \llbracket 2, n \rrbracket \times \mathbb N^n\right \}
    \end{align*}
    and each $a_{j,m}$ is a complex number. Recall that, for $k\in \llbracket1,n \rrbracket$, $e_k \in \mathbb N^n$ is the $n$-tuple whose entries are zeroes except the $k$-th one which equals $1$.
\end{remark}
By the grace of this result and the fact that the foliation $\mathcal{F}_0$ of $S^{2n-1}$ is $C^w$-isomorphic to the foliation $\restr{\mathcal{F}}{S^{2n-1}(r)}$ of $S^{2n-1}(r)$ for $r\in (0,1]$ as small as we want by the correspondence along the orbits of $\widetilde{\mathbf N}$, we can assume that $\xi$ writes as $$\xi=\sum_{j=1}^n\lambda_jz_j \frac{\partial}{\partial z_j}+\sum_{(j,m)\in R} a_{j,m}z^m\frac{\partial}{\partial z_j}$$ and that $\mathcal{F}_0$ is given by the transverse intersection of $\mathcal{F}$ with the unit sphere $S^{2n-1}$.
Also, as mentioned in \cite{haefliger_deformations_1985},  we can assume that the coefficients $a_{j,m}$ are as small as we want. Indeed, if $A>1$ is large enough and $h$ is the diagonal linear map whose entries are $A^{1}, A^{1/2}, \cdots, A^{1/n}$ in that order, then 
$$h_*\xi=\sum_{j=1}^n\lambda_jz_j \frac{\partial}{\partial z_j}+\sum_{(j,m)\in R} \frac{A^{1/j}}{A^{m_1+m_2/2+\cdots +m_{j-1}/(j-1)}}a_{j,m}z^m\frac{\partial}{\partial z_j},$$
therefore, since for any $(j,m) \in R$ there exists $k\in \llbracket1,j-1 \rrbracket$ such that $m_k\geq 1$, it comes that 
$$\frac{A^{1/j}}{A^{m_1+m_2/2+\cdots +m_{j-1}/(j-1)}} \leq A^{1/j-1/k}=A^{-\tfrac{j-k}{jk}}$$
which concludes.\\
In that case, since $\xi_0:=\sum_{j=1}^n\lambda_jz_j \frac{\partial}{\partial z_j}$ is transverse to $S^{2n-1}$, we can take the coefficients $a_{j,m}$ small enough so that $\xi$ is transverse to $S^{2n-1}$ and thus to $S^{2n-1}(r)$ for every $r \in (0,1]$.
As a result, by the correspondence along the orbits of $\widetilde{N}$, we can assume $\mathcal{F}_0$ is given by the intersection of $\mathcal{F}$ and $S^{2n-1}$.\\
To put it in a nutshell, we can restrict ourselves to the case where :
\begin{itemize}
    \item $\xi$ is equal to the holomorphic polynomial vector field $$\sum_{j=1}^n\lambda_jz_j \frac{\partial}{\partial z_j}+\sum_{(j,m)\in R} a_{j,m}z^m\frac{\partial}{\partial z_j}; $$
    \item the coefficients $a_{j,m}$ can be taken as small as we want ;
    \item $\xi$ is transverse to  $S^{2n-1}(r)$ for every $r\in (0,1]$ ;
    \item $\mathcal{F}_0$ is given by the intersection of $\mathcal{F}(\xi)$ and $S^{2n-1}$.
\end{itemize}
Moreover, the set of eigenvalues $(\lambda_1, \cdots, \lambda_n) \in \mathbb C^n$ of the differential at $0$ of $\xi$ belongs to the Poincaré domain.
\begin{proposition}
\label{prop:3.3}
    The vector field $\xi = \sum_{j=1}^n\lambda_jz_j \dfrac{\partial}{\partial z_j}+\sum_{(j,m)\in R} a_{j,m}z^m\dfrac{\partial}{\partial z_j}$ is a non-vanishing holomorphic vector field on $W:=\mathbb C^n \setminus0$ whose holomorphic flow is given, for $j \in \llbracket1,n\rrbracket$ and $t \in \mathbb C$, by :
    \begin{align*}
        z_j(t)=e^{\lambda_st}\left( z_j(0)+ \sum_{r=1}^\infty\left(\sum_{(j,m)\in R} b_{j,m}^{(r)}z(0)^m \right)t^r \right)
    \end{align*}
    where each $b_{j,m}^{(r)}$ is a complex number.
\end{proposition}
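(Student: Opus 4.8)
The plan is to prove the proposition in three steps. First I would check that $\xi$ does not vanish on $W=\mathbb C^n\setminus 0$; then I would perform the classical substitution $z_j=e^{\lambda_j t}w_j$ that strips off the linear part of the vector field; and finally I would observe that the resonance relations built into the index set $R$ turn the transformed system into an autonomous, triangular, polynomial system, which integrates by quadratures and produces exactly the stated formula.

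For the non-vanishing, suppose $\xi(z)=0$ with $z\neq 0$ and let $k=\min\{\,j:z_j\neq 0\,\}$. For any $(k,m)\in R$ one has $m_\ell=0$ for $\ell\geq k$, so the monomial $z^m$ involves only $z_1,\dots,z_{k-1}$; since $|m|\geq 2$ (or $m=e_{k-1}$) at least one of these coordinates actually appears, and all of them vanish by the choice of $k$, so $z^m=0$. Hence the $k$-th component of $\xi(z)$ equals $\lambda_k z_k$, and $\xi(z)=0$ forces $\lambda_k=0$; but $(\lambda_1,\dots,\lambda_n)$ belongs to the Poincaré domain, so $0$ is not in the convex hull of the $\lambda_j$ and in particular no $\lambda_j$ is zero — a contradiction. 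So $\xi$ is a nowhere vanishing holomorphic vector field on $W$ (the origin is its only singularity), and since the orbit through any $z\neq 0$ cannot hit the fixed point $0$, $\xi$ genuinely lives on $W$.

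Next, set $w_j(t):=e^{-\lambda_j t}z_j(t)$. A curve $t\mapsto z(t)$ solves $\dot z_j=\lambda_j z_j+\sum_{(j,m)\in R}a_{j,m}z^m$ if and only if
\[
\dot w_j=\sum_{(j,m)\in R}a_{j,m}\,e^{\,t\bigl(\sum_k m_k\lambda_k-\lambda_j\bigr)}\,w^m .
\]
The point now is that $\sum_k m_k\lambda_k=\lambda_j$ for every $(j,m)\in R$ with $a_{j,m}\neq 0$: for the monomials with $|m|\geq 2$ this is precisely the resonance condition in the definition of $R$, while for the Jordan term $m=e_{j-1}$ the coefficient $a_{j,e_{j-1}}=b_j$ is zero unless $j-1$ and $j$ belong to a common Jordan block of $d_0\xi$, in which case $\lambda_{j-1}=\lambda_j=\sum_k(e_{j-1})_k\lambda_k$. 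So all the exponential factors are identically $1$, and the system becomes the autonomous polynomial system $\dot w_j=\sum_{(j,m)\in R}a_{j,m}w^m$ (the empty sum for $j=1$, i.e.\ $\dot w_1=0$). Because $m_k=0$ for $k\geq j$ whenever $(j,m)\in R$, the $j$-th equation involves only $w_1,\dots,w_{j-1}$, so the system is triangular: one solves it inductively on $j$ with $w_j(0)=z_j(0)$, getting at each stage a polynomial in $t$ whose coefficients are polynomials in $z(0)$, and multiplying back by $e^{\lambda_j t}$ gives the announced formula $z_j(t)=e^{\lambda_j t}\bigl(z_j(0)+\sum_{r\geq 1}c_j^{(r)}(z(0))\,t^r\bigr)$ with the sum finite.

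I expect the only genuinely delicate point to be the bookkeeping that identifies the coefficients $c_j^{(r)}$ with the $\sum_{(j,m)\in R}b_{j,m}^{(r)}z(0)^m$ of the statement, i.e.\ the claim that the monomials of $c_j^{(r)}$ are resonant for $\lambda_j$. This is handled by one more induction on $j$: a monomial of $\prod_{k<j}w_k(s)^{m_k}$ with $(j,m)\in R$ is a product of monomials coming from $w_1,\dots,w_{j-1}$, each of which is resonant for its own eigenvalue by the inductive hypothesis; summing these relations with multiplicities $m_k$ and using $\sum_k m_k\lambda_k=\lambda_j$ shows the product is resonant for $\lambda_j$, and integrating in $t$ does not alter the $z(0)$-monomials. (One small caveat worth flagging in the write-up: the set $R$ as defined lists only a generating family of the resonant exponents for $j$ — e.g.\ iterated Jordan shifts produce $z(0)^{e_{j-2}}$ for index $j$, which is resonant but not literally in $R$ — so the sum ``$\sum_{(j,m)\in R}$'' in the conclusion should be read as a sum over all exponents $m$ with $m_k=0$ for $k\geq j$ and $\sum_k m_k\lambda_k=\lambda_j$, a finite set by the Poincaré-domain hypothesis.) Everything else is the standard Picard iteration for a triangular polynomial ODE.
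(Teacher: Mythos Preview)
Your proof is correct and follows the same inductive strategy as the paper: exploit the triangular structure of $\xi$ (the $j$-th component depends only on $z_1,\dots,z_{j-1}$) and integrate coordinate by coordinate via variation of constants. The paper writes this as $(z_{j+1}e^{-\lambda_{j+1}t})'=e^{-\lambda_{j+1}t}f(t)$ and appeals directly to the induction hypothesis, whereas you make the global substitution $w_j=e^{-\lambda_j t}z_j$ and observe that the resonance relations $\sum_k m_k\lambda_k=\lambda_j$ annihilate the exponential factors, producing an autonomous triangular polynomial system in $w$. This is the same argument, but your presentation makes explicit a step the paper leaves tacit --- namely that $e^{-\lambda_{j+1}t}f(t)$ is a polynomial in $t$ precisely because of the resonance condition, not merely because of triangularity. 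Your caveat that iterated integrations produce resonant monomials for $\lambda_j$ that need not literally lie in $R$ (e.g.\ $z(0)^{e_{j-2}}$ from a double Jordan shift) is a valid observation about a small imprecision in the statement as written.
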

\begin{proof}
    Since, for $j\in \llbracket2,n\rrbracket$, the $j$-th component of $\xi_R:=\sum_{(j,m)\in R} a_{j,m}z^m\dfrac{\partial}{\partial z_j}$ depends only on the holomorphic functions $z_1, \ldots, z_{j-1}$, and because the first component of $\xi$ is $\lambda_1z_1\dfrac{\partial}{\partial z_1}$, we can solve $z_1$ ($z_1(t)=e^{\lambda_1t}z_1(0)$) and then $z_j$ for any $j\in \llbracket2,n\rrbracket$ by induction so that it has the desired form. Indeed, assume the result true for a fixed $j\in \llbracket1,n-1\rrbracket$. Then, for $t \in \mathbb C$, 
    $$z_{j+1}'(t)=\lambda_{j+1}z_{j+1}(t)+f(t)$$
    where $$f(t)=\sum_{(j,m)\in R} a_{j,m}(z(t))^m.$$
    We can write :
    $$\left (z_{j+1}e^{-\lambda_{j+1}t}\right)'(t)=e^{-\lambda_{j+1}t}f(t)$$
    which proves the result by induction hypothesis since, by definition of $R$, $f(t)$ depends only on $z_1(t), \ldots, z_{j-1}(t)$. \\
    If $\xi(z)=0$, the same argument first gives $z_1=0$ by looking at the first component of $\xi$, since $\lambda_1\neq 0$, and then $z_j=0$ for every $j\in \llbracket1,n\rrbracket$.
\end{proof}
\section{Restatement of the theorem} 
\label{sec:4}
\begin{definition}
As defined by Arnold in \cite{arnold_chapitres_1996}, a sequence of complex numbers $\lambda=(\lambda_1, \ldots , \lambda_n) \in \mathbb C^n$ is \textit{resonant} if there exists a relationship of the type
$$\lambda_s=(m,\lambda):=\sum_{k=1}^nm_k\lambda_k$$
where $m=(m_1, \ldots, m_n) \in \mathbb N^n$ satisfies $|m|:=\sum_{k=1}^n m_k \geq 2$.
Such relationship is called a resonance. We will also consider resonances with $|m|=1$ and call trivial the resonances of the form $\lambda_s=\lambda_{s}$.
\end{definition}
Remark that in our case where $\lambda=(\lambda_1, \ldots , \lambda_n)$ belongs to the Poincaré domain, such a resonance must satisfy $m\neq 0$.
\begin{definition}
Given a sequence of complex numbers $\lambda=(\lambda_1, \ldots , \lambda_n)$, a monomial vector field in $\mathbb C^n$ of the form $$az^m\cfrac{\partial}{\partial z_s}$$ where $a \in \mathbb C$, $s\in \llbracket 1 , n \rrbracket$, , $m=(m_1, \ldots, m_n) \in \mathbb N^n$ and $z^m:={z_1}^{m_1}\ldots {z_n}^{m_n}$, is called \textit{$\lambda$-resonant} if $\lambda_s=(m,\lambda)$.
\end{definition}
In our case where $\lambda=(\lambda_1, \cdots, \lambda_n)$ is in the Poincaré domain, there cannot be infinitely many resonances :
\begin{proposition}
    \label{prop:4.3}
    Every point $\lambda=(\lambda_1, \cdots, \lambda_n)$ of the Poincaré domain satisfies only a finite number of resonances $\lambda_s=(m,\lambda)$, $m\in \mathbb N^n$
\end{proposition}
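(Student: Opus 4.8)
The plan is to exploit the defining property of the Poincaré domain — that $0\in\mathbb C$ does not lie in the convex hull $K:=\operatorname{conv}\{\lambda_1,\dots,\lambda_n\}$ — to produce a real-linear functional that is uniformly positive on $\{\lambda_1,\dots,\lambda_n\}$; a single linear estimate then bounds $|m|$ over all resonances, and finiteness follows at once.

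First I would observe that $K$, being the convex hull of finitely many points of $\mathbb R^2$, is compact and convex, and does not contain the origin. By the separating hyperplane theorem there is a nonzero real-linear functional $\phi\colon\mathbb C\to\mathbb R$ with $0=\phi(0)<\alpha$, where $\alpha:=\inf_{z\in K}\phi(z)$; the strictness $\alpha>0$ is exactly where compactness of $K$ enters. Writing $\phi(z)=\operatorname{Re}(\bar c z)$ for a suitable $c\in\mathbb C\setminus\{0\}$, this reads $\operatorname{Re}(\bar c\lambda_k)\ge\alpha>0$ for every $k\in\llbracket 1,n\rrbracket$ (in particular each $\lambda_k\ne0$, as must hold in the Poincaré domain). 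Set $\beta:=\max_{1\le k\le n}\operatorname{Re}(\bar c\lambda_k)\in(0,\infty)$.

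Next, suppose $\lambda_s=(m,\lambda)=\sum_{k=1}^n m_k\lambda_k$ is a resonance, with $s\in\llbracket 1,n\rrbracket$ and $m=(m_1,\dots,m_n)\in\mathbb N^n$. Applying $\operatorname{Re}(\bar c\,\cdot)$ and using $\mathbb R$-linearity,
$$\beta \ge \operatorname{Re}(\bar c\lambda_s) = \sum_{k=1}^n m_k\operatorname{Re}(\bar c\lambda_k) \ge \alpha\sum_{k=1}^n m_k = \alpha\,|m|,$$
so $|m|\le\beta/\alpha$. Hence every exponent $m$ occurring in a resonance lies in the finite set $\{m\in\mathbb N^n:\ |m|\le\beta/\alpha\}$ (which already excludes $m=0$, consistent with $\lambda_s\ne0$), and for each such $m$ there are at most $n$ admissible values of $s$. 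Therefore the set of resonances is finite.

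I do not anticipate a real obstacle: the argument collapses to the one-line estimate above, the only subtle point being the passage from $0\notin K$ to the \emph{strict} bound $\alpha>0$, which relies on $K$ being compact. The same functional $\operatorname{Re}(\bar c\,\cdot)$ should moreover be the natural device for the later estimates on the exponents appearing in the set $R$ of the Poincaré-Dulac remark, and for checking that $R$ is finite as asserted there.
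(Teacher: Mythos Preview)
Your proof is correct and follows essentially the same strategy as the paper: show that the Poincar\'e domain hypothesis forces a uniform bound on $|m|$ for any resonance, whence only finitely many lattice points can occur. The only difference is cosmetic: you obtain the bound constructively via the separating hyperplane theorem, producing an explicit $|m|\le\beta/\alpha$, whereas the paper argues by contradiction---assuming $|m^{(N)}|\to\infty$, dividing the resonance relation by $|m^{(N)}|$, and passing to the limit to force $0$ into the compact convex hull $K$. Your version has the minor advantage of giving an effective bound and of making transparent the linear functional $\operatorname{Re}(\bar c\,\cdot)$ that the paper later uses implicitly (e.g.\ when choosing an angle $\alpha$ so that $e^{i\alpha}\lambda_j$ all have negative real part).
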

\begin{proof}
    Necessarily, there exists $C>0$ such that every resonance $\lambda_s=(m,\lambda)$ satisfy $|m|\leq C$. Otherwise, there exists a sequence $(m^{(N)})_{N \in \mathbb N} \in (\mathbb N^n)^\mathbb N$, with $|m^{(N)}| \geq N+1$ for every $N \in \mathbb N$, and a resonance $\lambda_{s_N}=(m^{(N)}, \lambda)$. 
    Dividing this equality by $|m^{(N)}|$ for $N \in \mathbb N$ and letting $N$ go to infinity leads to $0$ belonging to the (closed) convex hull of $\{ \lambda_1, \cdots, \lambda_n\}$ which is absurd. Therefore, there must be finitely many $m=(m_1, \cdots, m_n)$ for each $\lambda_s$ since each $m_j$ is a non-negative integer.
\end{proof}
\begin{definition}
    We note $\mathcal{g}_\lambda$ the complex vector space of holomorphic vector fields on $\mathbb C^n$ which are (finite) sum of $\lambda$-resonant monomial vector fields and by $\mathcal{g}_\lambda^\perp$ the complex vector space of holomorphic vector fields on $\mathbb C^n$ which are (infinite) sum of non-$\lambda$-resonant monomial vector fields.
\end{definition}
The vector field $\xi=\sum_{j=1}^n\lambda_jz_j \tfrac{\partial}{\partial z_j}+\sum_{(j,m)\in R} a_{j,m}z^m\dfrac{\partial}{\partial z_j}$ belongs to $\mathcal{g}_\lambda$ since, as we said, we consider trivial resonances also in $\mathcal{g}_\lambda$, and the first sum corresponds exactly to the monomial vector fields associated to the (trivial) resonances $\lambda_s=\lambda_s$.
\begin{proposition}
\label{prop:4.5}
\leavevmode
\begin{enumerate}[label=(\roman*)]
    \item The vector space $\mathcal{g}_\lambda$ of $\lambda$-resonant vector fields is exactly the set of holomorphic vector fields on $\mathbb C^n$ commuting with the diagonal vector field $\xi_0:=\sum_{j=1}^n\lambda_jz_j \dfrac{\partial}{\partial z_j}$ ;
    \item $\mathcal{g}_\lambda$ is a finite dimensional vector space and a Lie subalgebra of the Lie algebra of holomorphic vector fields on $\mathbb C^n$ ;
    \item The bracket of a monomial $\lambda$-resonant vector field with a monomial $\lambda$-resonant vector field is a sum of two monomial $\lambda$-resonant vector fields.\\
    The bracket of a monomial $\lambda$-resonant vector field with a monomial non-$\lambda$-resonant vector field is a sum of two monomial non-$\lambda$-resonant vector fields ;
    \item The Lie derivative $L_\xi$ along the direction of $\xi$ maps $\mathcal{g}_\lambda$ to $\mathcal{g}_\lambda$ and $\mathcal{g}_\lambda^\perp$ to $\mathcal{g}_\lambda^\perp$ ;
    \item If $\theta$ is the sheaf of holomorphic vector fields on $W= \mathbb C^n\setminus0$, then
    $$H^0(W, \theta)=\mathcal{g}_\lambda\oplus\mathcal{g}_\lambda ^\perp.$$
\end{enumerate}
\end{proposition}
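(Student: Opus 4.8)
The plan is to derive everything from one computation: the action of $L_{\xi_0}$ on a monomial vector field. From $\xi_0(z^m)=(m,\lambda)z^m$ and $[\xi_0,\partial/\partial z_s]=-\lambda_s\,\partial/\partial z_s$, the Leibniz rule for the Lie bracket gives
\[
L_{\xi_0}\!\left(z^m\frac{\partial}{\partial z_s}\right)=\bigl((m,\lambda)-\lambda_s\bigr)\,z^m\frac{\partial}{\partial z_s},
\]
so $L_{\xi_0}$ is diagonal in the monomial basis, with kernel on monomials spanned exactly by the $\lambda$-resonant ones; the one structural fact used repeatedly is that $m\mapsto(m,\lambda)$ is additive under multiplication of monomials. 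For (i): a holomorphic vector field $Z$ on $\mathbb C^n$ is entire, hence equals its globally convergent Taylor series $\sum_{s,m}c_{s,m}z^m\,\partial/\partial z_s$, and $[\xi_0,Z]=\sum_{s,m}c_{s,m}\bigl((m,\lambda)-\lambda_s\bigr)z^m\,\partial/\partial z_s$ vanishes iff $c_{s,m}=0$ for every non-$\lambda$-resonant pair $(s,m)$; since by Proposition \ref{prop:4.3} only finitely many monomials are $\lambda$-resonant (trivial resonances included), such a $Z$ lies in $\mathcal{g}_\lambda$, and the converse is this formula read backwards. The same finite list of resonant monomials is a spanning family for $\mathcal{g}_\lambda$, giving finite-dimensionality, and $\mathcal{g}_\lambda$ is a Lie subalgebra as the centralizer of $\xi_0$ (or by (iii)); this settles (i) and (ii).

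For (iii), the Leibniz rule on monomials gives
\[
\left[z^m\frac{\partial}{\partial z_s},\,z^p\frac{\partial}{\partial z_t}\right]=p_s\,z^{m+p-e_s}\frac{\partial}{\partial z_t}-m_t\,z^{m+p-e_t}\frac{\partial}{\partial z_s},
\]
with the convention that a term is omitted when its scalar factor ($p_s$, resp.\ $m_t$) vanishes, so that no negative exponent actually occurs. By additivity, $(m+p-e_s,\lambda)=(m,\lambda)+(p,\lambda)-\lambda_s$ and similarly with $t$. If both monomials are $\lambda$-resonant, i.e.\ $(m,\lambda)=\lambda_s$ and $(p,\lambda)=\lambda_t$, the first surviving term pairs to $\lambda_t$ and the second to $\lambda_s$, so both are $\lambda$-resonant; if the first is $\lambda$-resonant and the second is not, the first term pairs to $(p,\lambda)\neq\lambda_t$ and the second to $\lambda_s+(p,\lambda)-\lambda_t\neq\lambda_s$, so both are non-$\lambda$-resonant. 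This is (iii), and it re-proves closure of $\mathcal{g}_\lambda$ under the bracket.

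For (iv), I would write $\xi=\xi_0+\xi_R$ with $\xi_R=\sum_{(j,m)\in R}a_{j,m}z^m\frac{\partial}{\partial z_j}\in\mathcal{g}_\lambda$, so $L_\xi=L_{\xi_0}+L_{\xi_R}$ with $L_{\xi_R}=[\xi_R,\,\cdot\,]$. The formula for $L_{\xi_0}$ shows it annihilates $\mathcal{g}_\lambda$ and rescales each non-$\lambda$-resonant monomial, hence preserves $\mathcal{g}_\lambda^\perp$; by (iii), $L_{\xi_R}$ maps $\mathcal{g}_\lambda$ into $\mathcal{g}_\lambda$ and $\mathcal{g}_\lambda^\perp$ into $\mathcal{g}_\lambda^\perp$, the passage from monomials to the (possibly infinite) sums in $\mathcal{g}_\lambda^\perp$ being legitimate because $\xi_R$ is a polynomial vector field, so the brackets stay entire and Taylor coefficients may be compared termwise. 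Hence $L_\xi$ preserves both subspaces.

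For (v), since $n\geq2$ Hartogs' extension theorem identifies $H^0(W,\theta)$ with the space of entire vector fields on $\mathbb C^n$; each such field is the sum of its $\lambda$-resonant Taylor part --- a finite sum by Proposition \ref{prop:4.3}, hence an element of $\mathcal{g}_\lambda$ --- and its remaining non-$\lambda$-resonant Taylor part, which is again entire and so an element of $\mathcal{g}_\lambda^\perp$; the sum is direct because uniqueness of Taylor coefficients forces $\mathcal{g}_\lambda\cap\mathcal{g}_\lambda^\perp=0$. I expect no deep obstacle here: the only care needed is the bookkeeping around the formally negative exponents in (iii) (harmless, since their coefficients vanish) and invoking Hartogs together with the finiteness of Proposition \ref{prop:4.3} at exactly the right places, so that every infinite sum that appears is an honest entire vector field and every resonant part is finite.
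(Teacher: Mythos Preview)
Your proof is correct and follows essentially the same approach as the paper: the diagonal action $L_{\xi_0}(z^m\partial/\partial z_s)=((m,\lambda)-\lambda_s)\,z^m\partial/\partial z_s$ drives (i), (ii), (v), the explicit bracket of two monomials gives (iii), and (iv) follows by decomposing $\xi$ into resonant monomials and applying (iii) termwise to the (possibly infinite) Taylor expansion of the argument. The only cosmetic differences are that the paper invokes the Jacobi identity directly in (ii) where you say ``centralizer,'' and in (iv) the paper treats $\xi$ as a single finite sum of resonant monomials rather than splitting it as $\xi_0+\xi_R$; both routes are equivalent and your justification for the termwise bracket (polynomiality of $\xi_R$, hence entire result with comparable Taylor coefficients) matches the paper's explicit lemma $[Y,X]=\sum b_l^{(j)}[Y,z^l\partial/\partial z_j]$.
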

     \begin{proof}
         $(i)$ : Let $X=\sum_{j=1}^n X_j \dfrac{\partial}{\partial z_j}$ a holomorphic vector field on $\mathbb C^n$. Write $X_j=\sum_{m\in \mathbb N^n} b_m^{(j)}z^m$ where each $a_m^{(j)}$ is a complex number.
         Then (see \cite{lee_manifolds_2009})\begin{align*}
             L_{\xi_0}(X)&=[\xi_0,X]=dX(\xi_0)-d\xi_0(X)=\sum_{j=1}^n\left(\sum_{k=1}^n \dfrac{\partial X_j}{\partial z_k}\lambda_k z_k -X_j\lambda_j  \right) \dfrac{\partial}{\partial z_j}\\
             &=\sum_{j=1}^n\left(\sum_{k=1}^n \sum_{m\in \mathbb N^n} b_m^{(j)}m_k\lambda_kz^m -\sum_{m\in \mathbb N^n} b_m^{(j)}\lambda_jz^m \right) \dfrac{\partial}{\partial z_j}\\
             &=\sum_{j=1}^n\left(\sum_{m\in \mathbb N^n} b_m^{(j)}\left( \sum_{k=1}^nm_k\lambda_k -\lambda_j \right)z^m \right) \dfrac{\partial}{\partial z_j}.
         \end{align*}
         Therefore, by properties of power series, $X=\sum_{j=1}^n X_j \dfrac{\partial}{\partial z_j}=\sum_{j=1}^n\left (\sum_{m\in \mathbb N^n} b_m^{(j)}z^m \right)\dfrac{\partial}{\partial z_j}$ commutes with $\xi_0=\sum_{j=1}^n\lambda_j z_j \dfrac{\partial}{\partial z_j}$ if and only if for every $j \in \llbracket1,n \rrbracket$ and $m \in \mathbb N^n$, $b_m^{(j)}\left( \sum_{k=1}^nm_k\lambda_k -\lambda_j\right)=0$, i.e. if and only if for every $j \in \llbracket1,n \rrbracket$ and $m \in \mathbb N^n$ such that $b_m^{(j)}$ is not zero, $ \lambda_j = \sum_{k=1}^nm_k\lambda_k$. This means exactly that $X\in \mathcal{g}_\lambda$.\\
         $(ii)$ : By the previous point and Proposition \ref{prop:4.3}, $\mathcal{g}_\lambda$ is a finite dimensional vector subspace of the space of holomorphic vector fields on $\mathbb C^n$. Also, recall the Jacob identity for smooth vector fields $X,Y,Z$ defined on an open subset of a smooth manifold : 
         $$[X,[Y,Z]]+[Y,[Z,X]]+[Z,[X,Y]]=0.$$
         Therefore, by the previous point and this identity, if $X$ and $Y$ belong to $\mathcal{g}_\lambda$, then $[\xi_0,[X,Y]]=0$ i.e. $[X,Y]\in \mathcal{g}_\lambda$ and $\mathcal{g}_\lambda$ is a Lie subalgebra of the algebra of holomorphic vector fields on $\mathbb C^n$ .\\
         $(iii)$ : We compute $[z^m\cfrac{\partial}{\partial z_k},z^l\dfrac{\partial}{\partial z_j}]=l_k \dfrac{z^{m+l}}{z_k}\dfrac{\partial}{\partial z_j}-m_j \dfrac{z^{m+l}}{z_j} \dfrac{\partial}{\partial z_k}$. Remark that if $l_k$ is non zero, then $\dfrac{z^{m+l}}{z_k}=z^{m+l-e_k}$ where $e_k \in \mathbb N^n$ is the $n$-tuple whose entries are zeroes except the $k$-th one which equals $1$.\\
         If $l_k \neq 0$, assume $\lambda_j=(l,\lambda)$. Then $\lambda_k+\lambda_j=(m+l,\lambda)$ thus $\lambda_j=(m+l-e_k,\lambda)$ and $l_k \dfrac{z^{m+l}}{z_k}\dfrac{\partial}{\partial z_j}$ is $\lambda$-resonant. It is also true in the trivial case where $l_k=0$. The same can be said for $m_j \dfrac{z^{m+l}}{z_j} \dfrac{\partial}{\partial z_k}$.\\
         If $l_k \neq 0$, assume $\lambda_j\neq(l,\lambda)$. Then $\lambda_j\neq(m+l-e_k,\lambda)$ so $l_k \dfrac{z^{m+l}}{z_k}\dfrac{\partial}{\partial z_j}$ is not $\lambda$-resonant (otherwise, we would have $\lambda_k+\lambda_j=(m+l,\lambda)$ thus $\lambda_j=(l,\lambda)$ since $\lambda_k=(m,\lambda)$). \\
         If $l_k=0$, $l_k \dfrac{z^{m+l}}{z_k}\dfrac{\partial}{\partial z_j}=0\in \mathcal{g}_\lambda^\perp$. The same can be said for $m_j \dfrac{z^{m+l}}{z_j} \dfrac{\partial}{\partial z_k}$.\\
         $(iv)$ : Since $\xi$ is a finite sum of $\lambda$-resonant vector fields, it suffices to show the result for a $\lambda$-resonant vector field $z^m\cfrac{\partial}{\partial z_k}$ instead of $\xi$. 
         First remark that if $Y$ is a holomorphic vector field on an open subset $U$ of $\mathbb C^n$ and if $f=\sum_{m\in \mathbb N^n} b_mz^m$ is a holomorphic function on $U$, then
         $$Y(f)=\sum_{m\in \mathbb N^n} b_mY(z^m).$$
         Indeed, this is true for $Y=\dfrac{\partial}{\partial z_j}$ by properties of power series, and since $Y$ is a linear combination on $U$ of $\dfrac{\partial}{\partial z_1}, \cdots, \dfrac{\partial}{\partial z_n}$, then it is also true for $Y$ by finite sum. As a result, if $X=\sum_{j=1}^n F_j \dfrac{\partial}{\partial z_j}$ is a holomorphic vector field on $U$, where $F_j=\sum_{m\in \mathbb N^n} b_m^{(j)}z^m$, it comes 
         \begin{align*}
             [Y,X]&=\sum_{j=1}^n[Y,F_j\dfrac{\partial}{\partial z_j}]=\sum_{j=1}^n \left (Y(F_j)\dfrac{\partial}{\partial z_j}+F_j[Y,\dfrac{\partial}{\partial z_j}] \right)\\
             &=\sum_{\substack{1\leq j \leq n\\m\in \mathbb N ^n}}b_m^{(j)}\left(Y(z^m)\dfrac{\partial}{\partial z_j}+ z^m[Y,\dfrac{\partial}{\partial z_j}]\right)\\
             &=\sum_{\substack{1\leq j \leq n\\m\in \mathbb N ^n}}b_m^{(j)}[Y,z^m\dfrac{\partial}{\partial z_j}].
         \end{align*}
         Therefore, in our case if $X$ is holomorphic on $\mathbb C^n$, we can write
         \begin{align*}
             [z^m\cfrac{\partial}{\partial z_k},X]=\sum_{\substack{1\leq j \leq n\\l\in \mathbb N ^n}}b_l^{(j)}[z^m\cfrac{\partial}{\partial z_k},z^l\dfrac{\partial}{\partial z_j}]
         \end{align*}
         and conclude thanks to the previous point.\\
         $(v)$ : By Hartog's extension theorem (see \cite{broder_theory_nodate} for references), every holomorphic vector field on $W=\mathbb C^n \setminus 0$ extends uniquely to a holomorphic vector field on $\mathbb C^n$. Therefore, the space $H^0(W, \theta)$ of holomorphic vector fields on $W$ is exactly the space of holomorphic vector fields on $\mathbb C^n$. A vector field $X \in H^0(W, \theta)$ can therefore be written 
         \begin{align*}
             X=\sum_{\substack{1\leq j \leq n\\m\in \mathbb N ^n}}b_m^{(j)}z^m\dfrac{\partial}{\partial z_j}&=\sum_{\substack{1\leq j \leq n\\m\in \mathbb N ^n\\\lambda_j=(m,\lambda)}}b_m^{(j)}z^m\dfrac{\partial}{\partial z_j}+\sum_{\substack{1\leq j \leq n\\m\in \mathbb N ^n\\\lambda_j\neq(m,\lambda)}}b_m^{(j)}z^m\dfrac{\partial}{\partial z_j}\\
             &=X^r+X^{nr}
         \end{align*}
         where $$X^r=\sum_{\substack{1\leq j \leq n\\m\in \mathbb N ^n\\\lambda_j=(m,\lambda)}}b_m^{(j)}z^m\dfrac{\partial}{\partial z_j} \in \mathcal{g}_\lambda \quad \text{ and } \quad X^{nr}=\sum_{\substack{1\leq j \leq n\\m\in \mathbb N ^n\\\lambda_j\neq(m,\lambda)}}b_m^{(j)}z^m\dfrac{\partial}{\partial z_j} \in \mathcal{g}_{\lambda}^\perp.$$
         This decomposition is unique by properties of power series (if $X^r+X^{nr}=0$, then $X=\sum_{\substack{1\leq j \leq n\\m\in \mathbb N ^n}}b_m^{(j)}z^m\dfrac{\partial}{\partial z_j}=0$ so for every $j\in \llbracket1,n\rrbracket$ and $m\in \mathbb N^n$, $b_m^{(j)}=0$ thus $X^r=0$ and $X^{nr}=0$).
     \end{proof}
\begin{corollary}
    \label{cor:4.5.1}
    The vector spaces $\text{Vect}_\mathbb C (\xi)$ and $L_\xi(H^0(W, \theta))$ form a direct sum.
\end{corollary}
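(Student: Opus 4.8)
\emph{Plan.} The plan is to reduce the claim to showing that $\xi$ itself does not lie in $L_\xi(H^0(W,\theta))$. Indeed, $\text{Vect}_\mathbb C(\xi)$ is one-dimensional, so the intersection $\text{Vect}_\mathbb C(\xi)\cap L_\xi(H^0(W,\theta))$ is a subspace of $\text{Vect}_\mathbb C(\xi)$, hence equals $\{0\}$ unless it is all of $\text{Vect}_\mathbb C(\xi)$; and the latter case would mean exactly that $\xi=[\xi,X]$ for some holomorphic vector field $X$. By Hartogs' extension theorem — as already used for Proposition \ref{prop:4.5}$(v)$ — the space $H^0(W,\theta)$ is the space of holomorphic vector fields on $\mathbb C^n$, and $L_\xi X=[\xi,X]$ for such $X$. (One could, if desired, first invoke Proposition \ref{prop:4.5}$(iv)$–$(v)$ to reduce further to $\xi\notin L_\xi(\mathcal{g}_\lambda)$, but this is not needed for what follows.)

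The key point is that a contradiction can be detected already on the $1$-jet at the singularity $0$, via the trace. So I would argue by contradiction: assume $\xi=[\xi,X]$, and set $A:=d_0\xi\in M_n(\mathbb C)$, whose eigenvalues are $\lambda_1,\dots,\lambda_n$. Since $\lambda=(\lambda_1,\dots,\lambda_n)$ lies in the Poincaré domain, $0$ is not in the convex hull of $\{\lambda_1,\dots,\lambda_n\}$; hence no $\lambda_j$ is zero, so $A$ is invertible, and moreover $\sum_{j=1}^n\lambda_j\neq0$, since otherwise $\frac1n\sum_{j=1}^n\lambda_j=0$ would be a convex combination of the $\lambda_j$. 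First, evaluating $\xi=[\xi,X]$ at $0$ and using $\xi(0)=0$ gives $0=[\xi,X](0)=-A\,X(0)$, hence $X(0)=0$. Then both $\xi$ and $X$ vanish at $0$, so the linear part at $0$ of $[\xi,X]$ is the commutator $[\,d_0X,\,A\,]$ of the linear parts; comparing it with the linear part $A$ of $\xi$ yields the matrix identity $A=d_0X\cdot A-A\cdot d_0X$. Taking traces forces $\Tr(A)=\sum_{j=1}^n\lambda_j=0$, contradicting the previous sentence. Hence $\xi\notin L_\xi(H^0(W,\theta))$, which is exactly the asserted direct sum.

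\emph{Expected difficulty.} I do not expect a genuine obstacle here. The only slightly delicate points are identifying the linear part at the common zero $0$ of the bracket $[\xi,X]$ with the commutator of the linear parts — for which it suffices to note that a bracket $[\xi_j,X_k]$ of homogeneous components has degree $j+k-1$, and that the degree-zero component $X(0)$ of $X$ has been eliminated in the first step — together with the sign bookkeeping in that computation, neither of which affects the trace argument. The real content is simply the observation that $\Tr(d_0\xi)=\sum_j\lambda_j\neq0$ on the Poincaré domain, whereas any nonzero multiple of $\xi$ lying in $L_\xi(H^0(W,\theta))$ would have traceless linear part.
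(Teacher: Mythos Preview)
Your argument is correct. The reduction to $\xi\notin L_\xi(H^0(W,\theta))$ is immediate, the computation of $[\xi,X](0)=-A\,X(0)$ matches the paper's convention $[\xi,X]=dX(\xi)-d\xi(X)$, and once $X(0)=0$ the linear part of $[\xi,X]$ is indeed the matrix commutator $d_0X\cdot A-A\cdot d_0X$, whose trace is zero. Since $\Tr(A)=\sum_j\lambda_j\neq0$ on the Poincaré domain, you get the contradiction cleanly.

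Your route differs substantially from the paper's. The paper first invokes Proposition~\ref{prop:4.5}$(iv)$--$(v)$ to reduce to $X\in\mathcal{g}_\lambda$, then uses that $[\xi_0,X]=0$ for such $X$ so that $a\xi=[\xi_R,X]$, and finally expands this bracket in the monomial basis and checks by hand that the coefficient of $z_1\frac{\partial}{\partial z_1}$ on the right must vanish, exploiting the triangular structure of the set $R$ (namely that no pair $(1,m)$ lies in $R$). Your trace argument bypasses all of this: it uses neither the Poincaré--Dulac normal form of $\xi$, nor the decomposition $H^0(W,\theta)=\mathcal{g}_\lambda\oplus\mathcal{g}_\lambda^\perp$, nor any combinatorics on $R$. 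It works for \emph{any} holomorphic vector field with an isolated singularity at $0$ whose linearization has nonzero trace, which is strictly more general and considerably shorter. The paper's approach, by contrast, stays closer to the explicit monomial bookkeeping that drives the rest of Section~\ref{sec:5}, so it is stylistically consistent with what follows, but for this particular corollary your argument is the more efficient one.
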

\begin{proof}
    Suppose there exists $a\in \mathbb C$ and $X$ a holomorphic vector field on $\mathbb C^n$ such that $a\xi=L_\xi(X)$.
    By the previous proposition, we can assume $X$ belongs $\mathcal{g}_\lambda$ to since if we write $X=X^r+X^{nr}$, then $L_\xi(X^r)+L_\xi(X^{nr})=a\xi$ so by points $(iv)$ and $(v)$, $a\xi=L_\xi(X^r)$.
    Write $X=\sum_{\substack{1\leq j \leq n\\m\in \mathbb N ^n\\\lambda_j=(m,\lambda)}}b_{m}^{(j)}z^m\dfrac{\partial}{\partial z_j}$ and $\xi=\xi_0+\xi_R$ where 
    $$\xi_0=\sum_{j=1}^n\lambda_jz_j \tfrac{\partial}{\partial z_j} \quad \text{ and } \quad \xi_R =\sum_{(s,l)\in R} a_{s,l}z^l\dfrac{\partial}{\partial z_s}.$$
    Then the equation $a \xi=L_\xi(X)$ implies :
    \begin{align*}
        a\xi&=[\xi,X]=[\xi_0,X]+[\xi_R, X]=[\xi_R, X]=\sum_{\substack{\lambda_j=(m,\lambda)\\(s,l)\in R}}b_{m}^{(j)}a_{s,l}[z^l\dfrac{\partial}{\partial z_s}, z^m\dfrac{\partial}{\partial z_j}]\\
        &=\sum_{\substack{\lambda_j=(m,\lambda)\\(s,l)\in R\\m_s\neq0}}b_{m}^{(j)}a_{s,l}m_sz^{m+l-e_s}\dfrac{\partial}{\partial z_j}-\sum_{\substack{\lambda_j=(m,\lambda)\\(s,l)\in R\\l_j\neq0}}b_{m}^{(j)}a_{s,l}l_jz^{m+l-e_j}\dfrac{\partial}{\partial z_s}.
    \end{align*}
    We are searching for the monomial vector field $z_1\dfrac{\partial}{\partial z_1}$ in the right-hand side member. 
    \begin{itemize}
        \item In the first sum, it would correspond to $j=1$ and $m+l=e_1+e_s$ so $m=e_1$ and $l=e_s$ or $m=e_s$ and $l=e_1$ (since $m$ and $l$ are both non-zero).\\
        If $m=e_1$, then since $m_s\neq0$, $s=1$ and $(1,e_1)\in R$ which is impossible.\\
        If $m=e_s$ and $l=e_1$, then since $\lambda_1=(m,\lambda)=\lambda_s$ and $(s,l) \in R$, it comes $(1,e_1)\in R$ which is again impossible.
        \item In the second sum, it would correspond to $s=1$ but this is impossible since $R$ does not contain any element $(1,m)$ where $m\in \mathbb N^n$.
    \end{itemize}
    Therefore, since $\lambda_1\neq 0$ (because $0$ does not belong to the convex hull of $\{\lambda_1, \cdots, \lambda_n\}$), it comes $a=0$ and the result.
\end{proof}

An idea of deformation of the transversely holomorphic foliation $\mathcal{F}_0$ coming from the intersection of the holomorphic foliation $\mathcal{F}=\mathcal{F}(\xi)$ with the sphere $S^{2n-1}$ can come from the intersections of holomorphic foliations $\mathcal{F}^S$ induced by a holomorphic family $(\xi_s)_{s\in S}$ of holomorphic non-vanishing vector fields on $W$ with the sphere, such that $\xi_0=\xi$.
We will need the following lemma :
\begin{proposition}
    \label{prop:4.6}
    If $U$ is a neighborhood of $0$ in $\mathcal{g}_\lambda$ taken sufficiently small, then every element $X$ of $U$ is nowhere vanishing on $\mathbb C^n\setminus 0$ and transverse to $S^{2n-1}$ (thus to $S^{2n-1}(r)$ for every $r\in (0,1]$).
\end{proposition}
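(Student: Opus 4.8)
We must find a neighborhood $U$ of $0$ in $\mathcal{g}_\lambda$ so that every $\xi+s$, $s\in U$, is nowhere vanishing on $\mathbb C^n\setminus 0$ and transverse to each sphere $S^{2n-1}(r)$, $r\in(0,1]$ — this is exactly what makes the family $(\xi+\Phi^*s)_{s}$ of Theorem \ref{thm:defo} a genuine deformation. Two facts are used throughout. First, by Section \ref{sec:3} we may assume $\xi=\xi_0+(\xi-\xi_0)$ with $\xi_0=\sum_{j=1}^n\lambda_j z_j\frac{\partial}{\partial z_j}$ and $\|\xi-\xi_0\|$ as small as we please — the scaling $h=\mathrm{diag}(A,A^{1/2},\dots,A^{1/n})$ shrinks every coefficient $a_{j,m}$, the Jordan terms $(j,e_{j-1})$ included. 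Second, by Propositions \ref{prop:4.3} and \ref{prop:4.5}, $\mathcal{g}_\lambda$ is finite dimensional and, being spanned by resonant monomials, graded by homogeneity, $\mathcal{g}_\lambda=\bigoplus_{d=1}^{N}\mathcal{g}_\lambda^{[d]}$ with $N<\infty$. Fix a norm $\|\cdot\|$ on $\mathcal{g}_\lambda$.

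For transversality, recall from Section \ref{sec:3} that a field $Z=\sum_j Z_j\frac{\partial}{\partial z_j}$ is transverse to $S^{2n-1}(r)$ at $p$ iff $\sum_j Z_j(p)\,\overline{p_j}\neq0$. Writing $p=rq$ with $q\in S^{2n-1}$ and $Z=\sum_{d=1}^N Z^{[d]}$, one computes
\[\sum_j Z_j(rq)\,\overline{(rq)_j}=r^2\Big(\Phi_{Z^{[1]}}(q)+\sum_{d=2}^{N}r^{d-1}\,\Phi_{Z^{[d]}}(q)\Big),\qquad\Phi_W(q):=\sum_j W_j(q)\,\overline{q_j},\]
with $W\mapsto\Phi_W$ linear and $q\mapsto\Phi_W(q)$ continuous on the compact sphere, so $\sup_{q\in S^{2n-1}}|\Phi_W(q)|\le C_0\|W\|$. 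Since $\lambda$ lies in the Poincaré domain, $\Phi_{\xi_0}(q)=\sum_j\lambda_j|q_j|^2$ belongs to the convex hull of $\{\lambda_1,\dots,\lambda_n\}$ for $q\in S^{2n-1}$, hence $|\Phi_{\xi_0}(q)|\ge\delta_0>0$ there. For $Z=\xi+s$ the degree-one part is $\xi_0+(\xi-\xi_0)^{[1]}+s^{[1]}$ while the degree-$\ge2$ parts come only from $\xi-\xi_0$ and $s$; since $r^{d-1}\le1$ for $r\in(0,1]$, the whole contribution other than $\Phi_{\xi_0}$ is bounded on $S^{2n-1}$ by $C(\|\xi-\xi_0\|+\|s\|)$ with $C=C(\lambda)$. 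Choosing the scaling with $\|\xi-\xi_0\|<\delta_0/(4C)$ and then $U\subseteq\{s:\|s\|<\delta_0/(4C)\}$ gives $\big|\sum_j Z_j(rq)\overline{(rq)_j}\big|\ge\frac{\delta_0}{2}\,r^2>0$ for all $q\in S^{2n-1}$ and all $r\in(0,1]$. Thus $\xi+s$ is transverse to all the spheres $S^{2n-1}(r)$ at once (the reason ``transverse to $S^{2n-1}$'' already covers the smaller radii is precisely that the estimate is driven by the scale-invariant term $\xi_0$); in particular $\xi+s$ does not vanish on $\overline{B^n}\setminus 0$.

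To kill the remaining possible zeros, those with $|z|>1$, I would use the triangular structure the Poincaré domain imposes on $\mathcal{g}_\lambda$. Since $0\notin\mathrm{conv}\{\lambda_1,\dots,\lambda_n\}$, choose an $\mathbb R$-linear functional $\ell$ on $\mathbb C$ with $\ell(\lambda_k)>0$ for all $k$, and group the indices into blocks $B_1,\dots,B_p$ of constant $\ell$-value, numbered by strictly increasing value. If $z^m\frac{\partial}{\partial z_s}$ is $\lambda$-resonant then $\ell(\lambda_s)=\sum_k m_k\ell(\lambda_k)$; for $|m|\ge2$ this forces $\ell(\lambda_k)<\ell(\lambda_s)$ whenever $m_k\ge1$, and for $|m|=1$ it forces $k$ into the block of $s$. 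Hence every $X\in\mathcal{g}_\lambda$ is block lower triangular,
\[X_{B_i}=A_i\,z_{B_i}+P_i\bigl(z_{B_1},\dots,z_{B_{i-1}}\bigr),\qquad P_i(0)=0,\]
where $A_i$ is the $B_i$-block of the linear part of $X$ (which is block-diagonal for the same reason). For $X=\xi+s$, $A_i(\xi)$ has its eigenvalues among the nonzero numbers $\lambda_k$, $k\in B_i$, hence is invertible, so $A_i(\xi+s)=A_i(\xi)+A_i(s)$ remains invertible once $\|s\|$ is small. Then $(\xi+s)(z)=0$ forces $z=0$ by induction from the bottom block up: $A_1z_{B_1}=0$ gives $z_{B_1}=0$, and if $z_{B_1}=\dots=z_{B_{i-1}}=0$ then $P_i$ vanishes, so $A_iz_{B_i}=0$ and $z_{B_i}=0$. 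Shrinking $U$ so that all $A_i(\xi+s)$ are invertible and intersecting with the neighborhood obtained for transversality produces the desired $U$.

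The only ingredient here that is not a routine compactness/continuity estimate over the finitely many coefficients of $\mathcal{g}_\lambda$ is the block-triangular normal form in the last paragraph; this is where the Poincaré-domain hypothesis is genuinely used (through the separating functional $\ell$), and it is the step I would be most careful about.
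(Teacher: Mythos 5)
Your proof is correct (reading the statement, as the paper's own proof does, as a statement about $\xi+X$ for $X\in U$), but it takes a genuinely different route. The paper works at a fixed $\xi$ by compactness and contradiction: transversality is checked only at $r=1$ via the estimate $\sup_{z \in S^{2n-1}}|\langle X(z),z \rangle|<\tfrac12\min_{z \in S^{2n-1}} |\langle \xi(z), z\rangle|$, and non-vanishing is obtained by splitting $\mathbb C^n\setminus 0$ into three regions: a punctured ball around $0$ and the complement of a large ball are treated by sequence arguments using the structure of the resonant monomials, and the intermediate compact annulus by comparing $\sup\|X\|$ with $\min\|\xi\|>0$. You instead exploit the finite grading of $\mathcal{g}_\lambda$ by homogeneity together with the scale-invariant bound $\bigl|\sum_j\lambda_j|q_j|^2\bigr|\geq \mathrm{dist}(0,\mathcal H(\lambda))>0$, which gives transversality to every sphere $S^{2n-1}(r)$, $r\in(0,1]$, at once --- so you actually prove the parenthetical claim, which the paper's proof leaves implicit --- at the price of invoking the standing reduction of Section \ref{sec:3} that the coefficients $a_{j,m}$ of $\xi-\xi_0$ are small, with a threshold depending only on $\lambda$, so there is no circularity; the paper's $r=1$ estimate, by contrast, needs no closeness of $\xi$ to $\xi_0$, only the standing transversality of $\xi$. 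Your non-vanishing argument via the separating functional $\ell$ and the resulting block lower-triangular form of every element of $\mathcal{g}_\lambda$ (linear resonances force $\lambda_k=\lambda_s$, hence a block-diagonal linear part with invertible, hence stably invertible, diagonal blocks; monomials with $|m|\geq 2$ involve only strictly lower blocks) is global on $\mathbb C^n\setminus 0$ and replaces the paper's three-region case analysis; it is essentially the triangular mechanism of Proposition \ref{prop:3.3} transported to arbitrary elements of $\mathcal{g}_\lambda$, and the step you flagged as delicate does check out.
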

\begin{proof}
    If $U$ is sufficiently small, then every element $X$ of $U$ satisfies $$\sup_{z \in S^{2n-1}}|\langle X(z),z \rangle|<\dfrac{1}{2}\min_{z \in S^{2n-1}} |\langle \xi(z), z\rangle| \neq 0.$$ Therefore, for every $z\in S^{2n-1}$, $(\xi+X)(z)$ can't be orthogonal to $z$ (for the usual hermitian metric on $\mathbb C^n$), otherwise  :
    $$|\langle X(z), z\rangle| = |\langle \xi(z), z\rangle| <\dfrac{1}{2}\min_{z \in S^{2n-1}} |\langle \xi(z), z\rangle|$$
    which is absurd.\\
    As for the non-vanishing of $\xi+X$ on $\mathbb C^n \setminus 0$, we affirm that :
    \begin{claim}
    \leavevmode
    \begin{enumerate}[label=(\roman*)]
        \item There exists $\epsilon_1>0$ and $\alpha_1>0$ such that for every $X \in \mathcal{g}_\lambda$ whose components are of modulus less than $\epsilon_1$, $(\xi+X)(z)\neq 0$ for every $z \in B(0,\alpha_1)\setminus0$;
        \item There exists $\epsilon_2>0$ and $\alpha_2>\alpha_1$ such that for every $X \in \mathcal{g}_\lambda$ whose components are of modulus less than $\epsilon_2$, $|(\xi+X)(z)|\geq 1$ for every $z \in \mathbb C^n \setminus(B(0,\alpha_2))$.
    \end{enumerate}
    \end{claim}
    \begin{proof}[Proof of the claim]
        $(i)$ : Otherwise, there would exist a sequence $(z_k)_k$ in $\mathbb C^n \setminus 0$ converging to $0$ and an element $X_k\in \mathcal{g}_\lambda$ whose components are less than $|z_k^{(l_k)}|$, where
        $$l_k=\min\{j \in \llbracket1, n \rrbracket, \; z^{(j)}_k\neq 0\},$$
        such that $\xi(z_k)=-X_k(z_k).$  
        After extraction we can assume $(l_k)_k$ is constant equal to $l\in \llbracket1,n\rrbracket$. However, the previous equality implies that $\lambda_lz_k^{(l)}=o(z_k^{(l)})$ which is absurd since $\lambda_l \neq 0$.\\
        $(ii)$ : In the same way, otherwise there would exist a sequence $(z_k)_k$ in $\mathbb C^n \setminus 0$ converging to $+\infty$ in norm and an element $X_k\in \mathcal{g}_\lambda$ whose components are less than $\dfrac{1}{|z_k|^M}$, where $M$ is greater than the largest order of resonance of $(\lambda_1, \ldots,\lambda_n)$, such that $$\|\xi(z_k)+X_k(z_k)\|\leq 1.$$
        However, since $(X_k(z_k))_k$ converges to $0$ as $k$ goes to $+\infty$ (by choice of the components of $X_k$),  $(\xi(z_k))_k$ is bounded but this is absurd as it tends to $+\infty$ in norm.
    \end{proof}
    Now take $U$ smaller so that additionally the components of every of its elements $X$ are less than $\min(\epsilon_1, \epsilon_2)$. By the previous claim, $X$ is such that $\xi+X$ is non zero on $B(0,\alpha_1)$ and on $\mathbb C^n \setminus(B(0,\alpha_2))$.
    We can shrink $U$ even more so that such $X$ satisfies moreover
    $$\sup_{\alpha_1 \leq \|z\|\leq \alpha_2}\|X(z)\|<\dfrac{1}{2}\min_{\alpha_1 \leq \|z\|\leq \alpha_2} \|\xi(z)\| \neq 0.$$
    Then for any $z \in \mathbb C^n$ such that $\alpha_1 \leq \|z\|\leq \alpha_2$, $(\xi +X)(z)$ is non zero (because $\xi(z)$ is not zero).
\end{proof}
\begin{remark}
    As mentioned by Haefliger in \cite{haefliger_deformations_1985}, a small open neighborhood of $0$ in a vector subspace of $\mathcal{g}_\lambda$ complementary to $L_\xi(\mathcal{g}_\lambda)$ parametrizes a versal deformation of the holomorphic vector field $\xi$ (see \cite{szucs_geometrical_1996} and \cite{brushlinskaya_finiteness_1971} where the notion of versal deformation of holomorphic vector field is studied and its existence is proved).
\end{remark}
We are now ready to restate the main theorem. \\
Let $n\geq 2$ and $\mathcal{F}_0=\mathcal{F}_0(\xi)$ a transversely holomorphic foliation on the unit sphere $S^{2n-1} \subset \mathbb C^n$ coming from the intersection with $S^{2n-1}$ of the foliation $\mathcal{F}$ induced by a holomorphic non vanishing vector field defined in a neighborhood of the unit ball $B$ of $\mathbb C^n$. Denote by $\lambda=(\lambda_1, \cdots, \lambda_n)$ the set of eigenvalues of the differential at $0$ of $\xi$ counted with multiplicity. As we mentioned earlier in section \ref{sec:3}, we can assume that :
    \begin{itemize}
    \item $\xi$ is equal to the holomorphic polynomial vector field $$\sum_{j=1}^n\lambda_jz_j \dfrac{\partial}{\partial z_j}+\sum_{(j,m)\in R} a_{j,m}z^m\dfrac{\partial}{\partial z_j};$$
    \item the coefficients $a_{j,m}$ are as small as we want ;
    \item $\xi$ is transverse to  $S^{2n-1}(r)$ for every $r\in (0,1]$;
    \item $\mathcal{F}_0$ is given by the intersection of $\mathcal{F}=\mathcal{F}(\xi)$ with $S^{2n-1}$.
\end{itemize}
Also, we know that the set of eigenvalues $(\lambda_1, \cdots, \lambda_n)$ of the differential at $0$ of $\xi$ belongs to the Poincaré domain.\\
As a result, by what precedes, the remarks at the beginning of Section \ref{sec:3} and the remark after Corollary \ref{cor:2.5.1}, Theorem \ref{thm:defo} can be restated in the following way with the previous notations :
\begin{mythm}{A'}
    \label{thm:defo2}
    Let $S$ be a neighborhood of $0$ in any vector subspace of $\mathcal{g}_\lambda$ complementary to the vector subspace of $\mathcal{g}_\lambda$ generated by $\xi$ and $L_\xi(\mathcal{g}_\lambda)$.\\
    Then, if $S$ is taken sufficiently small, the (well-defined) family $\left(\mathcal{F}_0(\xi+s) \right)_{s \in S}$ of transversely holomorphic foliations on $S^{2n-1}$ coming from the intersections with the sphere $S^{2n-1}$ of the holomorphic foliations $(\mathcal{F}(\xi +s))_{s\in S}$ induced by the family of (non-vanishing on $W=\mathbb C^n\setminus 0$) holomorphic vector fields $(\xi +s)_{s\in S}$ is a versal deformation of the transversely holomorphic foliation $\mathcal{F}_0$ parametrized by $(S,0)$.
\end{mythm}
Indeed, if we prove this result, then by going backwards, we can send the germ of deformation $(\mathcal{F}_0(\xi +s))_{s \in S}$ of $\mathcal{F}_0(\xi)$ to the initial one on $\partial M$ by a smooth family of diffeomorphisms, thanks in particular to Proposition \ref{prop:4.6}.
\section{Proof of the theorem}
\label{sec:5}
Denote by $\mathcal{F}$ the holomorphic foliation on $W:=\mathbb C^n \setminus 0$ induced by the non-vanishing holomorphic flow 
$$\xi=\sum_{j=1}^n\lambda_jz_j \dfrac{\partial}{\partial z_j}+\sum_{(j,m)\in R} a_{j,m}z^m\dfrac{\partial}{\partial z_j};$$
and by $\mathcal{F}_0$ the transversely holomorphic foliation on $S^{2n-1}$ obtained by intersecting the orbits of $\xi$ with the sphere $S^{2n-1}$.\\
As before, consider the following exact sequences of sheaves :
\begin{align}
    0 \longrightarrow \sigma^{tr}_{\mathcal{F}} \longrightarrow &\sigma \xrightarrow{L_{\xi}} \sigma \longrightarrow 0 \label{eq:1}\\
    0 \longrightarrow \theta^{\xi} \longrightarrow &\theta \xrightarrow{L_{\xi}} \theta \longrightarrow 0 \label{eq:2}\\
    0 \longrightarrow \sigma^{tr}_{\xi} \xrightarrow{m_\xi} &\theta^{\xi} \longrightarrow \theta^{tr}_{\mathcal{F}} \longrightarrow 0 \label{eq:3}
\end{align}
\begin{lemma}
    \label{lem:5.1}
    If we note $\iota: S^{2n-1} \to W$ the inclusion, then $\sigma^{tr}_{\mathcal{F}_0}\cong\iota^{-1}\sigma^{tr}_{\mathcal{F}}$ and $\theta^{tr}_{\mathcal{F}_0}\cong\iota^{-1}\theta^{tr}_{\mathcal{F}}$.
\end{lemma}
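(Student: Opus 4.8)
The plan is to reduce everything to the explicit description of the distinguished charts of $\mathcal{F}_0$ obtained in Section~\ref{sec:3}, together with the functoriality of the topological inverse-image functor. Recall that $\mathcal{F}$ is given on $W$ by an open covering $(U_i)_i$, holomorphic submersions $f_i : U_i \to \mathbb{C}^{n-1}$ and biholomorphic transition maps $g_{ij}$, and that $\mathcal{F}_0$ is given on $S^{2n-1}$ by the covering $(S^{2n-1}\cap U_i)_i$, the submersions $s_i := f_i\circ\iota$, and the transition maps $\restr{g_{ij}}{f_j(S^{2n-1}\cap U_i\cap U_j)}$. So, chart by chart, the defining data of $\mathcal{F}_0$ is obtained from that of $\mathcal{F}$ by precomposition with $\iota$; the whole proof is the formal consequence of this observation.

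First I would compare the two sheaves on a distinguished chart. By definition of the structural sheaf, $\restr{\sigma^{tr}_{\mathcal{F}}}{U_i}=f_i^{-1}\sigma_{\mathbb{C}^{n-1}}$ and $\restr{\sigma^{tr}_{\mathcal{F}_0}}{S^{2n-1}\cap U_i}=s_i^{-1}\sigma_{\mathbb{C}^{n-1}}$. Using $s_i=f_i\circ\iota$ and the identity $(f_i\circ\iota)^{-1}=\iota^{-1}\circ f_i^{-1}$ for inverse images of continuous maps, one gets
\[
\restr{\sigma^{tr}_{\mathcal{F}_0}}{S^{2n-1}\cap U_i}
=(f_i\circ\iota)^{-1}\sigma_{\mathbb{C}^{n-1}}
=\iota^{-1}\bigl(f_i^{-1}\sigma_{\mathbb{C}^{n-1}}\bigr)
=\iota^{-1}\bigl(\restr{\sigma^{tr}_{\mathcal{F}}}{U_i}\bigr)
=\restr{\iota^{-1}\sigma^{tr}_{\mathcal{F}}}{S^{2n-1}\cap U_i}.
\]
Concretely this is just the statement that the germ at $p\in S^{2n-1}$ of a transversely holomorphic function $g\circ f_i$ for $\mathcal{F}$ restricts, along $\iota$, to the germ $g\circ s_i$ of a transversely holomorphic function for $\mathcal{F}_0$, and that every germ of the latter arises this way; the assignment is injective on germs because $f_i$ and $s_i$ are open maps (shrinking $U_i$ so that $f_i(U_i)$ is connected if one wants to argue by analytic continuation). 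The local isomorphism is induced by the restriction map $g\circ f_i\mapsto(g\circ f_i)\circ\iota$, which is visibly compatible with the transition cocycles $g_{ij}$ and their restrictions; hence, by the uniqueness of a sheaf with a prescribed compatible local model (cf.\ \cite{perrin_algebraic_2008}), these local isomorphisms glue to a global isomorphism $\sigma^{tr}_{\mathcal{F}_0}\cong\iota^{-1}\sigma^{tr}_{\mathcal{F}}$.

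The second isomorphism is obtained in exactly the same way, replacing $\sigma_{\mathbb{C}^{n-1}}$ by $\theta_{\mathbb{C}^{n-1}}$ and using $\restr{\theta^{tr}_{\mathcal{F}}}{U_i}=f_i^{-1}\theta_{\mathbb{C}^{n-1}}$ and $\restr{\theta^{tr}_{\mathcal{F}_0}}{S^{2n-1}\cap U_i}=s_i^{-1}\theta_{\mathbb{C}^{n-1}}$; alternatively it follows from the first isomorphism via the local identifications $\theta^{tr}_{\mathcal{F}}\cong(\sigma^{tr}_{\mathcal{F}})^{\oplus(n-1)}$ and $\theta^{tr}_{\mathcal{F}_0}\cong(\sigma^{tr}_{\mathcal{F}_0})^{\oplus(n-1)}$ together with the fact that $\iota^{-1}$ commutes with finite direct sums. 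I do not expect a real obstacle here: the only points needing a little care are checking that the chart-wise isomorphisms are genuinely compatible with the (restricted) transition maps, so that they glue to a well-defined global isomorphism, and — if one prefers to phrase the argument on stalks rather than through inverse-image functoriality — shrinking the $U_i$ so that a transversely holomorphic germ for $\mathcal{F}$ is determined by its restriction to the sphere.
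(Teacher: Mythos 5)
Your proposal is correct and follows essentially the same route as the paper: identify both sheaves on a distinguished chart via $s_i=f_i\circ\iota$ and the functoriality $(f_i\circ\iota)^{-1}\sigma_{\mathbb C^{n-1}}=\iota^{-1}\bigl(f_i^{-1}\sigma_{\mathbb C^{n-1}}\bigr)$, then invoke uniqueness of the sheaf with this prescribed local model (the same reference to \cite{perrin_algebraic_2008}), and repeat verbatim with $\theta_{\mathbb C^{n-1}}$ in place of $\sigma_{\mathbb C^{n-1}}$. The extra remarks on stalkwise injectivity and the $(\sigma^{tr})^{\oplus(n-1)}$ alternative are harmless but not needed.
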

\begin{proof}
    By definition, $\sigma^{tr}_{\mathcal{F}_0}$ is the unique sheaf of functions with values in $\mathbb C$ whose value for a distinguished chart $(S^{2n-1}\cap U_i, s_i)$ of $\mathcal{F_0}$ is $s_i^{-1}(\sigma_{\mathbb C^{n-1}})$ (see \cite{perrin_algebraic_2008}). 
    On the other hand, $\sigma^{tr}_{\mathcal{F}}$ is a sheaf of functions with values in $\mathbb C$ whose value for a distinguished chart $(U_i, f_i)$ of $\mathcal{F}$ is $f_i^{-1}(\sigma_{\mathbb C^{n-1}})$. Therefore, $\iota^{-1}\sigma^{tr}_{\mathcal{F}}$ is naturally isomorphic to a sheaf of functions with values in $\mathbb C$ whose value for a distinguished chart $(S^{2n-1}\cap U_i, s_i)$ of $\mathcal{F}_0$ is $(f_i\circ \iota)^{-1}(\sigma_{\mathbb C^{n-1}})=s_i^{-1}(\sigma_{\mathbb C^{n-1}})$ by construction of $\mathcal{F}_0$.
    Thus, $\sigma^{tr}_{\mathcal{F}_0}\cong\iota^{-1}\sigma^{tr}_{\mathcal{F}}$.\\
    The exact same proof applies for $\theta^{tr}_{\mathcal{F}_0}$ and $\theta^{tr}_{\mathcal{F}}$ by changing each $\sigma$ into a $\theta$.
\end{proof}
Consider the orbits of the holomorphic flow $(\phi^t)_{t \in \mathbb C}$ generated by $\xi$ (see Proposition \ref{prop:3.3}). Since $0 \in \mathbb C$ does not belong to the convex hull generated by $\{\lambda_1,\cdots, \lambda_n\}$, there exists an angle $\alpha \in \mathbb R$ such that $e^{i\alpha}\lambda_1, \cdots, e^{i\alpha}\lambda_n$ have strictly negative real parts. Therefore, $(\phi^{e^{i\alpha}\cdot t})_{t \in \mathbb R}$ defines a non-vanishing smooth flow on $W$ whose orbits tend to $0$ when $t\in \mathbb R$ goes to $+\infty$ and tend to $+\infty$ in norm when $t$ goes to $-\infty$. These orbits are also tangent to the leaves of $\mathcal{F}$ and transversal to the spheres $S(0,r)$ for every $r \in (0,1]$. Therefore, we can define a smooth projection map $\pi :W \to S^{2n-1}$ which maps the point $z \in W$ to the (unique) point of intersection of the orbit $(\phi^{e^{i\alpha}\cdot t}(z))_{t \in \mathbb R}$ with the sphere $S^{2n-1}$.
\begin{proposition}
    \label{prop:5.2}
    The map $\pi : W \to S^{2n-1}$ is a deformation retract of $W$ onto $S^{2n-1}$. \\
    In particular, it defines a homotopy between $\iota\circ \pi :W\to W$ and $Id_W$ the identity map on $W$.
\end{proposition}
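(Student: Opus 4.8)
The plan is to produce the deformation retraction explicitly from the flow. Write $\psi^t:=\phi^{e^{i\alpha}t}$ for the real one-parameter flow on $W$ introduced just before the statement, and for $z\in W$ put $\rho_z(t):=\|\psi^t(z)\|^2$. The key point is that for each fixed $z\in W$ the function $\rho_z:\mathbb R\to(0,+\infty)$ is a strictly decreasing diffeomorphism. Indeed, at every time $t$ the orbit of $\psi$ meets the sphere of radius $\|\psi^t(z)\|$ transversally, which says precisely that $\rho_z'(t)\neq0$; since $\mathbb R$ is connected, $\rho_z'$ has constant sign, and this sign is negative because $\rho_z$ is positive and tends to $0$ as $t\to+\infty$. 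Combined with $\rho_z(t)\to0$ as $t\to+\infty$ and $\rho_z(t)\to+\infty$ as $t\to-\infty$, this shows $\rho_z$ is a decreasing bijection of $\mathbb R$ onto $(0,+\infty)$. Hence there is a unique $\tau(z)\in\mathbb R$ with $\|\psi^{\tau(z)}(z)\|=1$, and by construction $\pi(z)=\psi^{\tau(z)}(z)$; moreover $\tau\equiv0$ on $S^{2n-1}$, so $\pi\circ\iota=\mathrm{Id}_{S^{2n-1}}$.

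Next I would check that $\tau:W\to\mathbb R$, and hence $\pi$, is smooth. The map $F(z,t):=\|\psi^t(z)\|^2-1$ is smooth on $W\times\mathbb R$ (the flow of $\xi$ is smooth, indeed real-analytic, by Proposition \ref{prop:3.3}), it vanishes at $(z,\tau(z))$, and $\partial_tF(z,\tau(z))=\rho_z'(\tau(z))\neq0$ by the previous paragraph; the implicit function theorem then gives that $\tau$ is smooth, and therefore so is $\pi=\psi^{\tau(\cdot)}(\cdot)$ as a composition of the smooth flow map with a smooth map.

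Finally, define $H:W\times[0,1]\to W$ by $H(z,u):=\psi^{u\,\tau(z)}(z)$. This is smooth, being the composition of the smooth flow map with the smooth map $(z,u)\mapsto u\,\tau(z)$, and it satisfies $H(\cdot,0)=\mathrm{Id}_W$, $H(\cdot,1)=\pi$ since $\psi^{\tau(z)}(z)=\pi(z)$, and $H(z,u)=z$ for every $z\in S^{2n-1}$ and $u\in[0,1]$ because then $\tau(z)=0$. Thus $\pi$ is a (strong) deformation retraction of $W$ onto $S^{2n-1}$, and $H$ is in particular a homotopy between $\iota\circ\pi$ and $\mathrm{Id}_W$, which is what is claimed.

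The step that will require the most care is the first one: the assertion that $t\mapsto\|\psi^t(z)\|$ is a strictly monotone bijection of $\mathbb R$ onto $(0,+\infty)$. This rests on two facts recorded in the discussion preceding the statement, namely that the orbits of $\psi$ are transverse to the spheres they cross, so $\rho_z$ has nowhere-vanishing derivative, and that they limit to $0$ as $t\to+\infty$ and to infinity in norm as $t\to-\infty$. Both come from $(\lambda_1,\dots,\lambda_n)$ lying in the Poincaré domain (whence $0$ is outside the convex hull of the $\lambda_j$, giving the transversality of the diagonal part and the strictly negative real parts $\mathrm{Re}(e^{i\alpha}\lambda_j)<0$), together with the smallness of the resonant coefficients $a_{j,m}$ and the explicit form of the flow in Proposition \ref{prop:3.3}, which lets one read off the behaviour as $t\to\pm\infty$. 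Once this monotone-bijection picture is secured, the remainder is the routine implicit-function-theorem bookkeeping above.
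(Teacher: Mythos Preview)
Your construction is exactly the paper's: both set $H(z,s)=\phi^{e^{i\alpha}s\,t_z}(z)$, and you supply the justification (well-definedness of $t_z$ and its regularity via the implicit function theorem) that the paper simply asserts. One minor caution: you claim $\rho_z'(t)\neq 0$ for \emph{all} $t$ by appealing to transversality to every sphere the orbit meets, but the discussion preceding the statement only records transversality for radii $r\in(0,1]$, and indeed the higher-degree resonant terms in $\xi$ may spoil transversality to large spheres. This does not affect the conclusion: transversality on $(0,1]$ together with the limits $\rho_z(t)\to 0,\,+\infty$ already forces a unique crossing of $S^{2n-1}$ (once the orbit enters the closed unit ball it can never exit, since $\rho_z'$ is nonvanishing hence of constant sign there), and the implicit function theorem applies at that unique crossing exactly as you describe.
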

\begin{proof}
    By definition, we have $\pi \circ \iota=Id_{S^{2n-1}}$. For $z \in W$, denote by $t_z \in \mathbb R$ the unique real number $t$ such that $\pi(z)=\phi^{e^{i\alpha}t}(z)$. The map $$\left \{\begin{array}{ccl}
        W & \to & \mathbb R  \\
        z & \mapsto & t_z
    \end{array} \right .$$ is continuous. 
    Let $H:W \times [0,1] \to W$ the map defined for $z\in W$ and $s\in [0,1]$ by :
    $$H(z,s)=\phi^{e^{i\alpha}st_z}(z).$$
    It is not difficult to see that $H$ defines a homotopy between $\iota \circ \pi :W\to W$ and $Id_W$ which satisfies moreover, for every $z\in S^{2n-1}$ and $s\in [0,1]$, $H(z,s)=z$.
\end{proof}
In the following, every vector space and every morphism between vector spaces (in particular our cohomology groups) is understood to be with regards to the field $\mathbb C$ of complex numbers.
\begin{proposition}
    \label{prop:5.3}
    For every $k\in \mathbb N$ :
    \begin{enumerate}[label=(\roman*)]
        \item the map $$\iota^{\# k} : H^k(W,\sigma ^{tr}_{\mathcal{F}})\to  H^k(S^{2n-1},\sigma ^{tr}_{\mathcal{F}_0}),$$
    induced by the inclusion $\iota: S^{2n-1} \to W$, is an isomorphism of vector spaces ;
        \item the map 
    $$\iota^{\Diamond k} : H^k(W,\theta ^{tr}_{\mathcal{F}})\to  H^k(S^{2n-1},\theta ^{tr}_{\mathcal{F}_0}),$$
    induced by the inclusion $\iota: S^{2n-1} \to W$, is an isomorphism of vector spaces.
    \end{enumerate}
\end{proposition}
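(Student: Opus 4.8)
The plan is to recognise $\mathcal{F}$ as the pullback of $\mathcal{F}_0$ along the retraction $\pi$ of Proposition \ref{prop:5.2}, so that $\sigma^{tr}_{\mathcal{F}}$ and $\theta^{tr}_{\mathcal{F}}$ become $\pi^{-1}$ of the corresponding sheaves on $S^{2n-1}$, and then to exploit the cohomological triviality of $\pi$, which the flow $(\phi^{e^{i\alpha}t})_{t\in\mathbb{R}}$ exhibits as a trivial $\mathbb{R}$-bundle. Since $(i)$ and $(ii)$ have identical proofs (replace $\sigma_{\mathbb{C}^{n-1}}$ by $\theta_{\mathbb{C}^{n-1}}$ throughout), I treat them simultaneously; and Lemma \ref{lem:5.1}, which gives $\iota^{-1}\sigma^{tr}_{\mathcal{F}}\cong\sigma^{tr}_{\mathcal{F}_0}$ and $\iota^{-1}\theta^{tr}_{\mathcal{F}}\cong\theta^{tr}_{\mathcal{F}_0}$, will let us transfer the result to the sphere.

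First I would show that $\mathcal{F}=\pi^{*}\mathcal{F}_0$ as transversely holomorphic foliations on $W$, where $\pi^{*}\mathcal{F}_0$ is defined by the foliated atlas $\bigl(\pi^{-1}(S^{2n-1}\cap U_i),\, s_i\circ\pi\bigr)_i$ (this makes sense, and is transverse to the fibres of $\pi$, because $\pi$ is a submersion whose fibres are orbits of the real flow $(\phi^{e^{i\alpha}t})_{t\in\mathbb{R}}$, which are tangent to $\mathcal{F}$). That the two have the same underlying real foliation is a plaque computation: a fibre of a distinguished submersion $f_i$ is a plaque of $\mathcal{F}$, i.e. a connected piece of a $\xi$-orbit, and since $\pi$ carries each point along its $\xi$-orbit to a point of $S^{2n-1}$, this is also locally a fibre of $s_i\circ\pi$. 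The essential point is that the transverse \emph{complex} structures coincide: the map induced by $\pi$ from the local leaf space of $\mathcal{F}$ at $z$ to that at $\pi(z)$ is the holonomy transport of $\mathcal{F}$ along the leafwise path $t\mapsto\phi^{e^{i\alpha}t}(z)$, $t\in[0,t_z]$, hence is biholomorphic by the very definition of a transversely holomorphic foliation; post-composing with the biholomorphism $\restr{f_i}{S^{2n-1}\cap U_i}=s_i$ that identifies the local leaf space of $\mathcal{F}$ at $\pi(z)$ with that of $\mathcal{F}_0$ yields the claim. From $\mathcal{F}=\pi^{*}\mathcal{F}_0$ one reads off, chart by chart, $(s_i\circ\pi)^{-1}\sigma_{\mathbb{C}^{n-1}}=\pi^{-1}\bigl(\restr{\sigma^{tr}_{\mathcal{F}_0}}{S^{2n-1}\cap U_i}\bigr)$, and, by uniqueness of the structural sheaf (\cite{perrin_algebraic_2008}), $\sigma^{tr}_{\mathcal{F}}\cong\pi^{-1}\sigma^{tr}_{\mathcal{F}_0}$; similarly $\theta^{tr}_{\mathcal{F}}\cong\pi^{-1}\theta^{tr}_{\mathcal{F}_0}$.

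Next, the map $W\to S^{2n-1}\times\mathbb{R}$, $z\mapsto(\pi(z),t_z)$, is a diffeomorphism carrying $\pi$ to the first projection, so $\pi$ is a trivial smooth bundle with fibre $\mathbb{R}$. For any sheaf $\mathcal{G}$ on $S^{2n-1}$, the Leray spectral sequence $H^{p}(S^{2n-1},R^{q}\pi_{*}\pi^{-1}\mathcal{G})\Rightarrow H^{p+q}(W,\pi^{-1}\mathcal{G})$ collapses: the stalk of $R^{q}\pi_{*}\pi^{-1}\mathcal{G}$ at $p$ is $\varinjlim_{V\ni p}H^{q}\bigl(V\times\mathbb{R},\,\mathrm{pr}^{-1}(\restr{\mathcal{G}}{V})\bigr)$, which by homotopy invariance of sheaf cohomology along $\mathbb{R}$ (see \cite{kashiwara_sheaves_1994}) equals $\varinjlim_{V\ni p}H^{q}(V,\restr{\mathcal{G}}{V})$, namely $\mathcal{G}_{p}$ for $q=0$ and $0$ for $q>0$. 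Hence $\pi^{*}:H^{k}(S^{2n-1},\mathcal{G})\to H^{k}(W,\pi^{-1}\mathcal{G})$ is an isomorphism for every $k$. Taking $\mathcal{G}=\sigma^{tr}_{\mathcal{F}_0}$ and $\mathcal{G}=\theta^{tr}_{\mathcal{F}_0}$, using Step 1 and Lemma \ref{lem:5.1}, and observing that $\pi\circ\iota=\mathrm{Id}_{S^{2n-1}}$ forces $\iota^{\# k}\circ\pi^{*}=\mathrm{Id}$ and $\iota^{\Diamond k}\circ\pi^{*}=\mathrm{Id}$, we conclude that $\iota^{\# k}$ and $\iota^{\Diamond k}$ are the inverses of the isomorphisms $\pi^{*}$, hence are themselves isomorphisms.

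The main obstacle is Step 1 — establishing that $\mathcal{F}$ genuinely \emph{is} the transversely holomorphic pullback $\pi^{*}\mathcal{F}_0$, i.e. that the transverse complex structures (not merely the underlying real foliations) agree, even though $\pi$ is only smooth. The thing to get right there is that this matching is exactly the holomorphy of the holonomy of a transversely holomorphic foliation together with the fact that $\pi$ is leaf-preserving; once this is granted, Step 2 is a routine use of the Leray spectral sequence and of the standard cohomological triviality of $\mathbb{R}$, following Haefliger's treatment in \cite{haefliger_deformations_1985}.
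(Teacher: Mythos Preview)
Your argument is correct, and it reaches the conclusion by a genuinely different route from the paper's.

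The paper does not identify $\sigma^{tr}_{\mathcal{F}}$ as $\pi^{-1}\sigma^{tr}_{\mathcal{F}_0}$. Instead it stays with the homotopy $H:W\times[0,1]\to W$ of Proposition~\ref{prop:5.2} and invokes the general homotopy-invariance theorem for sheaf cohomology (as in \cite{schapira_algebra_2007}). That theorem as stated requires the sheaf to be locally constant, which $\sigma^{tr}_{\mathcal{F}}$ is not; the paper therefore inspects Schapira's proof and observes that the hypothesis can be weakened to: $H^{-1}\mathcal{G}$ restricted to each track $I_z=\{z\}\times[0,1]$ is locally constant. It then verifies this weakened hypothesis directly, using that $H(z,t)$ stays in the $\xi$-orbit of $z$, so $f_i(H(z,t))$ is independent of $t$.

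Your approach front-loads the geometric content into the identification $\sigma^{tr}_{\mathcal{F}}\cong\pi^{-1}\sigma^{tr}_{\mathcal{F}_0}$ (equivalently, $\mathcal{F}=\pi^{*}\mathcal{F}_0$), after which the cohomological step is the standard fact that $R\pi_{*}\pi^{-1}\mathcal{G}\cong\mathcal{G}$ for \emph{any} sheaf $\mathcal{G}$ when $\pi$ has contractible fibres (Kashiwara--Schapira, Prop.~2.7.8), so the Leray spectral sequence degenerates. This is cleaner in that it avoids reopening a black-box proof to weaken its hypotheses, and it makes explicit the structural reason the argument works: the transverse sheaf is already a pull-back along the retraction. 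The paper's approach, on the other hand, has the advantage of not needing to spell out the holonomy argument that matches the two transverse complex structures; it only needs the stalkwise constancy along $I_z$, which is a one-line check. At bottom both proofs exploit the same fact---that $\sigma^{tr}_{\mathcal{F}}$ is constant along the real flow lines---but they package the cohomological consequence differently.
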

    \begin{proof}
        The following is a result from algebraic topology which describes how homotopy equivalence behaves with respect to sheaf cohomology groups (see \cite{schapira_algebra_2007}) :
        \begin{theorem}
        \label{thm:5.4}
            Let $f_0,f_1 :X\to Y$ two homotopic maps and let $\mathcal{G}$ a locally constant sheaf on $Y$ of modules over a field $\mathbb K$.\\
            Then for every $k \in \mathbb N$, there exists an isomorphism 
            $$\beta^k: H^k(X,f_0^{-1}\mathcal{G}) \to H^k(X,f_1^{-1}\mathcal{G})$$
            such that $f_1^{\#k}=\beta^k\circ f_0^{\#k}$.
        \end{theorem}
        \begin{corollary}
            Let $f:X\to Y$ a homotopy equivalence and $\mathcal{G}$ a locally constant sheaf on $Y$ of modules over a field $\mathbb K$.\\
            Then for every $k \in \mathbb N$, the map $$f^{\# k} : H^k(X,f^{-1}\mathcal{G})\to  H^k(Y,\mathcal{G})$$
            induced by $f$, is an isomorphism of vector spaces.
        \end{corollary}
    Recall that a locally constant sheaf $\mathcal{G}$ on a topological space $X$ is a sheaf on $X$ satisfying : for every $x\in X$, there exists a neighborhood $U$ of $x$ in $X$ such that the restriction of $\mathcal{G}$ to $U$ is a constant sheaf on $U$. Recall also that a constant sheaf is a sheaf whose stalks are all equal.\\ 
    In our case, by Proposition \ref{prop:5.2}, the inclusion $\iota:S^{2n-1} \to W$ is a homotopy equivalence, but even though $\iota^{-1}\sigma^{tr}_\mathcal{F}=\sigma^{tr}_{\mathcal{F}_0}$ by Lemma \ref{lem:5.1}, $\sigma^{tr}_\mathcal{F}$ is a priori not a locally constant sheaf on $W$. 
    However, upon closely examining the proof of Theorem \ref{thm:5.4}in  \cite{schapira_algebra_2007}, we observe that the assumption that $\mathcal{G}$ is locally constant can be weakened to the following condition : $\mathcal{G}$ is a sheaf of modules over a field $\mathbb K$ such that, if we note $H:X\times [0,1] \to Y$ the homotopy between $f_0:X\to Y$ and $f_1 : X\to Y$, and for $x \in X$ we call $I_x:=\{x\} \times [0,1] \subset X \times [0,1]$, then for every $x\in X$, the restriction of the sheaf $H^{-1}\mathcal{G}$ on $X \times [0,1]$ to $I_x$ is a locally constant sheaf on $I_x$.\\
    That being said we only need to prove the following :
    \begin{claim}
        With the previous notations,  for every $z\in W$, the restriction of the sheaf $H^{-1}\sigma^{tr}_\mathcal{F}$ on $W \times [0,1]$ to $I_z$ is a locally constant sheaf on $I_z$. \\
        The same is true for $\theta^{tr}_\mathcal{F}$ (and even $\theta^\xi$) instead of $\sigma^{tr}_\mathcal{F}$.
    \end{claim}
    \begin{proof}[Proof of the claim]
        Fix $z \in W$. By compactness of $[0,1]$ and continuity of $H$, there exists $N \in \mathbb N$ and $U_0, \ldots, U_N$ distinguished charts for $\xi$ such that $H(z,[0,1]) \subset \bigcup_{i=0}^N U_i$.
        We therefore have an open cover of $I_z$ by $(H\circ i_z)^{-1}(U_0), \ldots, (H\circ i_z)^{-1}(U_N)$, where $$i_z : \{z\} \times [0,1]\to W \times [0,1]$$ is the inclusion.
        Since $\restr{\sigma^{tr}_\mathcal{F}}{U_i}=f_i^{-1}(\sigma_{\mathbb C^{n-1}})$, it comes that the restriction of the sheaf $H^{-1}\sigma^{tr}_\mathcal{F}$ to $(H\circ i_z)^{-1}(U_i)$ is
        $$\restr{i_z^{-1}\left(H^{-1}\sigma^{tr}_\mathcal{F} \right)}{(H\circ i_z)^{-1}(U_i)}=(f_i \circ H\circ i_z)^{-1}(\sigma_{\mathbb C^{n-1}})$$
        which is a constant sheaf on $(H\circ i_z)^{-1}(U_i)$ since its stalk at a point $t$ is the stalk  of
        $\sigma_{\mathbb C^{n-1}}$
        at $f_i(H(z,t))$, which does not depend on $t \in (H\circ i_z)^{-1}(U_i)$ by definition of $H$ (for every $t\in [0,1]$, $H(z,t)$ stays in the orbit of $z$).
        The same is obviously true for $\theta^{tr}_\mathcal{F}$ and $\theta^\xi$.
    \end{proof}
    \end{proof}
\begin{corollary}
    Denote by $\rho_{\mathcal{F}} : T_0S 
    \to H^1(M, \theta^{tr}_{\mathcal{F}})$ the Kodaira-Spencer map measuring the germ of deformation of the transversely holomorphic foliation of $\mathcal{F}$ induced by the holomorphic family of vector fields $(\xi +s)_{s \in S}$.
    Denote also by $\rho_{\mathcal{F}_0} : T_0S 
    \to H^1(M, \theta^{tr}_{\mathcal{F}_0})$ the Kodaira-Spencer map measuring the germ of deformation of the transversely holomorphic foliation $\mathcal{F}_0$ induced by the intersection with the sphere of the holomorphic family of vector fields $(\xi +s)_{s \in S}$.\\
    Then $\rho_{\mathcal{F}_0}=\iota^{\Diamond1} \circ \rho_\mathcal{F}$.
    \end{corollary}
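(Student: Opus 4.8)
The plan is to unwind both Kodaira--Spencer maps into \v Cech $1$-cocycles with respect to one common foliated cover, and then to recognize that the cocycle representing $\rho_{\mathcal F_0}$ is literally the $\iota$-restriction of the one representing $\rho_{\mathcal F}$ — which is exactly the cochain-level operation inducing $\iota^{\Diamond 1}$ on cohomology. So the statement is in essence a bookkeeping exercise with the definitions, in the spirit of Proposition~\ref{prop:2.6}.

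First I would fix an open cover $(U_i)_i$ of $W$ by distinguished charts for $\mathcal F$, together with the holomorphic family of submersions $f_i^s\colon U_i\to\mathbb C^{n-1}$ and transition biholomorphisms $g_{ij}^s$ (with $f_i^s=g_{ij}^s\circ f_j^s$) constituting the germ of deformation of the transversely holomorphic foliation underlying $\mathcal F$ induced by $(\xi+s)_{s\in S}$. By the construction of $\mathcal F_0$ recalled in Section~\ref{sec:3}, the induced germ of deformation of $\mathcal F_0$ is then given by the cover $(S^{2n-1}\cap U_i)_i$, the submersions $s_i^s:=f_i^s\circ\iota$, and the very same transition maps $g_{ij}^s$ (restricted to the relevant open subsets of $\mathbb C^{n-1}$). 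Consequently $\rho_{\mathcal F}\!\left(\restr{\dfrac{\partial}{\partial s}}{0}\right)$ is represented by the cocycle $c$ with
$$c_{ij}=\left(\restr{\dfrac{\partial g_{ij}^s}{\partial s}}{0}\right)\circ f_j^0\ \in\ \restr{\theta^{tr}_{\mathcal F}}{U_i\cap U_j},$$
whereas $\rho_{\mathcal F_0}\!\left(\restr{\dfrac{\partial}{\partial s}}{0}\right)$ is represented by the cocycle $c^0$ with
$$c^0_{ij}=\left(\restr{\dfrac{\partial g_{ij}^s}{\partial s}}{0}\right)\circ s_j^0=\left(\restr{\dfrac{\partial g_{ij}^s}{\partial s}}{0}\right)\circ f_j^0\circ\iota=\restr{c_{ij}}{S^{2n-1}\cap U_i\cap U_j},$$
where we have used Lemma~\ref{lem:5.1} to identify $\iota^{-1}\theta^{tr}_{\mathcal F}$ with $\theta^{tr}_{\mathcal F_0}$. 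Thus $c^0$ is the image of $c$ under the restriction-of-cochains map $\check C^1((U_i)_i,\theta^{tr}_{\mathcal F})\to\check C^1((S^{2n-1}\cap U_i)_i,\theta^{tr}_{\mathcal F_0})$.

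It then remains to note that $\iota^{\Diamond 1}$ of Proposition~\ref{prop:5.3}, being the pullback in sheaf cohomology induced by the continuous map $\iota$, is compatible with this \v Cech description: the restriction-of-cochains map above fits into the standard commuting square relating the comparison maps $\check H^1\to H^1$ on both sides with $\iota^{\Diamond 1}$. Since Kodaira--Spencer classes live in $H^1$, where the comparison from a single cover already makes sense, the equality $c^0_{ij}=\restr{c_{ij}}{S^{2n-1}\cap U_i\cap U_j}$ at the cochain level yields $\rho_{\mathcal F_0}=\iota^{\Diamond 1}\circ\rho_{\mathcal F}$.

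The one point demanding care, rather than a genuine obstacle, is precisely this identification of $\iota^{\Diamond 1}$ with the elementary cochain restriction — but that is nothing more than the functoriality of sheaf cohomology under $\iota^{-1}$, and it is already built into Proposition~\ref{prop:5.3}, where $\iota^{\Diamond k}$ is defined as the map ``induced by $\iota$''. One might instead hope to derive the identity from the corollary to Proposition~\ref{prop:2.9} together with the naturality of the connecting homomorphism $\delta'$ under $\iota^{-1}$; I would avoid this route, since $\mathcal F_0$ is not globally the orbit foliation of a holomorphic vector field on $S^{2n-1}$, so Proposition~\ref{prop:2.9} does not apply verbatim on the sphere. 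Combined with the corollary to Proposition~\ref{prop:2.9}, the present statement gives the explicit formula $\rho_{\mathcal F_0}\!\left(\restr{\dfrac{\partial}{\partial s}}{0}\right)=-\,\iota^{\Diamond 1}\circ p\circ\delta'\!\left(\restr{\dfrac{\partial(\xi+s)}{\partial s}}{0}\right)$, which is what will reduce the versality computation on the sphere to the corresponding computation on $W$.
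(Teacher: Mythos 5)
Your argument is correct and coincides with the paper's proof, which simply notes the identity is immediate from the definitions: you have spelled out the same bookkeeping, namely that the induced deformation of $\mathcal{F}_0$ uses the submersions $f_i^s\circ\iota$ with the same transition maps $g_{ij}^s$, so its Kodaira--Spencer cocycle is the $\iota$-restriction of the one for $\mathcal{F}$, which is precisely $\iota^{\Diamond 1}$ at the cochain level. Your side remark about not invoking Proposition \ref{prop:2.9} directly on the sphere is a sensible precaution but does not change the substance.
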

\begin{proof}
    This is straightforward by definition.
\end{proof}
In order to prove Theorem \ref{thm:defo2}, we will use Corollary \ref{cor:2.5.1} of \cite{haefliger_deformations_1983} and prove that $\rho_{\mathcal{F}_0}: T_0S \to H^1(M, \theta^{tr}_{\mathcal{F}_0})$ 
is an isomorphism, since we already know that $(S,0)$ is non-singular because $S$ is an open subset of a vector space, thus smooth.
By the previous corollary, it is the same as proving that $\rho_{\mathcal{F}} : T_0S 
\to H^1(M, \theta^{tr}_{\mathcal{F}})$ is an isomorphism.
\begin{proposition}
    The Kodaira-Spencer map $\rho_\mathcal{F}:T_0S \to H^1(W, \theta^{tr}_\mathcal{F})$ measuring the germ of deformation of the transversely holomorphic foliation of $\mathcal{F}$ induced by the holomorphic family of vector fields $(\xi+s)_{s\in S}$ is given, for $X\in T_0S\subset \mathcal{g_\lambda}\subset H^0(W, \theta)$, by :
$$\rho_\mathcal{F}(X)=-p\circ \delta'(X).$$
\end{proposition}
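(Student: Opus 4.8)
The plan is to recognise this as a direct instance of the Corollary following Proposition~\ref{prop:2.9}, once its hypotheses are checked in the present situation. First I would verify that $(\xi+s)_{s\in S}$ is a holomorphic family of nowhere-vanishing holomorphic vector fields on $W=\mathbb C^n\setminus 0$ parametrised by $(S,0)$, so that it genuinely induces a germ of deformation $\mathcal F^S$ of $\mathcal F=\mathcal F(\xi)$. Nowhere-vanishing on $W$ is exactly what Proposition~\ref{prop:4.6} provides once $S$ is taken to be a sufficiently small neighbourhood of $0$ in its linear subspace of $\mathcal g_\lambda$, while holomorphy in $s$ (indeed affinity) is immediate from the formula $s\mapsto\xi+s$. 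Passing from the resulting holomorphic family of one-dimensional foliations on $W$ to the underlying transversely holomorphic foliations then yields the germ of deformation of $\mathcal F$ whose Kodaira--Spencer map is the map $\rho_{\mathcal F}$ in the statement.

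Next I would apply the Corollary to Proposition~\ref{prop:2.9} with $M=W$, the vector field $\xi$, and the family $\xi^s:=\xi+s$; it gives
\[
\rho_{\mathcal F}\!\left(\restr{\dfrac{\partial}{\partial s}}{0}\right)=-\,p\circ\delta'\!\left(\restr{\dfrac{\partial\xi^s}{\partial s}}{s=0}\right),
\]
with $\delta':H^0(W,\theta)\to H^1(W,\theta^\xi)$ the connecting morphism of the short exact sequence~\eqref{eq:2} and $p:H^1(W,\theta^\xi)\to H^1(W,\theta^{tr}_{\mathcal F})$ the map induced by the projection $\theta^\xi\to\theta^{tr}_{\mathcal F}$. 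It then remains only to identify the derivative of the family: since $S$ is an open neighbourhood of $0$ in a linear subspace $V\subset\mathcal g_\lambda$, one has $T_0S=V$, and for $X\in T_0S$ the derivative of $s\mapsto\xi+s$ at $0$ in the direction of $X$ is $X$ itself, viewed as an element of $\mathcal g_\lambda\subset H^0(W,\theta)$ (the latter inclusion being Proposition~\ref{prop:4.5}(v)). Substituting $\restr{\partial\xi^s/\partial s}{s=0}=X$ into the formula gives precisely $\rho_{\mathcal F}(X)=-\,p\circ\delta'(X)$.

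I do not anticipate a real obstacle: the statement is merely the translation of the general Kodaira--Spencer formula of Proposition~\ref{prop:2.9} and its corollary into the explicit coordinates of our problem. The only points deserving a moment's care are that $W$ is noncompact --- which is harmless here, since neither Proposition~\ref{prop:2.9} nor the long exact cohomology sequences underlying it use any compactness --- and that the family must take values among nowhere-vanishing vector fields in order to define a deformation of $\mathcal F$ at all, which is exactly what Proposition~\ref{prop:4.6} guarantees after shrinking $S$.
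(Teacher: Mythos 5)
Your proposal is correct and follows essentially the same route as the paper: identify $S$ as an open neighbourhood of $0$ in a linear subspace of $\mathcal{g}_\lambda$ so that the derivative of the affine family $s\mapsto\xi+s$ at $s=0$ in the direction $X\in T_0S$ is $X$ itself, then apply Proposition \ref{prop:2.9} (via its corollary giving the transversal formula $-p\circ\delta'$). Your extra remarks on nowhere-vanishing via Proposition \ref{prop:4.6} and on the harmlessness of the noncompactness of $W$ are consistent with what the paper establishes beforehand.
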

\begin{proof}
Consider a holomorphic family of non-vanishing holomorphic fields $(\xi +s)_{s \in S}$ on $W$. 
Denote by $d$ the (complex) dimension of $T_0S$ and by $(X_1, \cdots, X_d)$ a basis of holomorphic vector fields of $T_0S \subset \mathcal{g_\lambda}$. Since $S$ is an open neighborhood of $0$ in a vector subspace of $\mathcal{g}_\lambda$, then it can be seen as an open neighborhood of $0\in T_0S$ in $T_0S$.
Therefore, if $\dfrac{\partial}{\partial s}=\sum_{l=1}^d c_lX_l \in T_0S$ is an element of $T_0S$, where $(c_1, \cdots, c_d) \in \mathbb C^d$, and if we write $s=\sum_{l=1}^d b_lX_l \in S$, where $(b_1, \cdots, b_d)\in \mathbb C^d$ is small enough, then by definition :
$$\restr{\dfrac{\partial (\xi+s)}{\partial s}}{s=0}=\sum_{l=1}^d c_lX_l \in H^0(W, \theta).$$
By Proposition \ref{prop:2.9}, we conclude.
\end{proof}
Consequently, we only need to prove that the restriction to $T_0S$ of the composition $p\circ \delta': H^0(W, \theta) \to H^1(W, \theta^{tr}_\mathcal{F})$ is an isomorphism from $T_0S$ to $H^1(W, \theta^{tr}_\mathcal{F})$.\\
Reexamine the cohomology long exact sequences associated to the exact sequences of sheaves (see Proposition \ref{prop:2.8}), which are represented in the following commutative diagram: 
   \[\begin{tikzcd}[column sep=5.87mm]
        & & & \vdots \arrow[d] & & & \\
        \cdots \arrow[r] & H^0(W, \sigma) \arrow[r,"L_\xi"] \arrow[d] & H^0(W,\sigma) \arrow[r, "\delta"] \arrow[d] & H^1(W,\sigma^{tr}_\mathcal{F}) \arrow[r] \arrow[d,"m_\xi"] & H^1(W,\sigma) \arrow[r,"L_\xi"] \arrow[d] & H^1(W,\sigma) \arrow[r] \arrow[d]  &\cdots \\
        \cdots \arrow[r]
 &  H^0(W, \theta)  \arrow[r,"L_\xi"] &  H^0(W,\theta) \arrow[r, "\delta'"] \arrow[rd, swap, "p\circ\delta' "] & H^1(W,\theta^\xi) \arrow[r] \arrow[d, "p"] & H^1(W,\theta) \arrow[r,"L_\xi"] & H^1(W,\theta) \arrow[r] & \cdots\\
  & &  &  H^1(W, \theta^{tr}_\mathcal{F}) \arrow[d]  &  &  & \\
  & &  &  H^2(W, \sigma^{tr}_\mathcal{F}) \arrow[d]  &  &  & \\
  & &  &  \vdots &  &  & 
    \end{tikzcd} 
    \]

\begin{proposition}
    \leavevmode
    \begin{enumerate}[label=(\roman*)]
        \item For every $k$ different from $0$ and $n-1$, $H^k(W,\sigma)=H^k(W,\theta)=0$ ;
        \item $H^{n-1}(W,\sigma)$ (respectively $H^{n-1}(W,\theta)$) is isomorphic to the vector space of convergent series  $\sum\limits_{\forall i, m_i<0} a_m z^m$ (respectively $\sum\limits_{\forall i, m_i<0} a_m^s z^m \dfrac{\partial}{\partial z_s}$)  on $(\mathbb C^*)^n$ ;
        \item For every $k>0$, the maps $$L_{\xi} : H^{n-1}(W,\sigma) \to H^{n-1}(W,\sigma)  \quad \text{ and } \quad L_{\xi} : H^{n-1}(W,\theta) \to H^{n-1}(W,\theta)$$ are isomorphisms of vector spaces ;
        \item The kernel of the map $L_\xi : H^0(W, \sigma) \to H^0(W, \sigma)$ is $\text{Vect}_\mathbb C(1)$, and its image is the space of holomorphic functions on $\mathbb C^n$ vanishing at $0$ ;
        \item The map $\restr{L_\xi}{\mathcal{g}_\xi^\perp} : \mathcal{g}_\xi^\perp \to \mathcal{g}_\xi^\perp$ is an isomorphism.
    \end{enumerate}
    
\end{proposition}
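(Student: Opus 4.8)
The plan is to reduce everything to the sheaf cohomology of $W=\mathbb C^n\setminus 0$ with coefficients in the structure sheaf, and then to a ``diagonal plus strictly triangular'' analysis of $L_\xi$.

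\emph{Items (i)--(ii).} I would first compute $H^\bullet(W,\sigma)$ by \v{C}ech cohomology with respect to the standard cover $W=\bigcup_{k=1}^n U_k$, $U_k=\{z_k\neq 0\}$. Every finite intersection $U_{k_0}\cap\cdots\cap U_{k_r}$ is biholomorphic to a product of copies of $\mathbb C$ and $\mathbb C^\ast$, hence Stein, so by Cartan's Theorem~B and Leray's theorem this \v{C}ech complex computes $H^\bullet(W,\sigma)$; expanding holomorphic functions on $(\mathbb C^\ast)^n=\bigcap_k U_k$ as Laurent series identifies it with a Koszul-type complex on $\mathbb Z^n$. Its only cohomology is in degree $0$ (Laurent series with all exponents $\geq 0$, i.e.\ $\mathcal O(\mathbb C^n)$, recovering Hartogs) and in degree $n-1$, where it is exactly the space of convergent series $\sum_{\forall i,\,m_i<0}a_mz^m$. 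Since $\theta\cong\sigma^{\oplus n}$ as sheaves of $\sigma$-modules (free basis $\partial/\partial z_1,\dots,\partial/\partial z_n$), the statements for $\theta$ follow componentwise. This proves (i) and (ii), and for the remaining items it shows that $L_\xi$ on $H^{n-1}$ is identified with ``apply $\xi$ as a derivation, then project onto the all-negative part''.

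\emph{Common mechanism for (iii)--(v).} Write $\xi=\xi_0+\xi_R$ with $\xi_0=\sum_j\lambda_jz_j\,\partial/\partial z_j$ and $\xi_R=\sum_{(j,p)\in R}a_{j,p}z^p\,\partial/\partial z_j$, where by \S\ref{sec:3} the $a_{j,p}$ may be taken arbitrarily small. The operator $L_{\xi_0}$ is diagonal in the monomial bases, $L_{\xi_0}(z^m)=(m,\lambda)z^m$ and $L_{\xi_0}(z^m\,\partial/\partial z_s)=\big((m,\lambda)-\lambda_s\big)z^m\,\partial/\partial z_s$, and because $\lambda$ lies in the Poincar\'e domain there is $\alpha$ with $\mathrm{Re}(e^{i\alpha}\lambda_i)<0$ for all $i$; this forces the relevant eigenvalues to be bounded away from $0$. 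Indeed on the Laurent-tail spaces ($m_i\leq-1$) one gets $\mathrm{Re}\,e^{i\alpha}(m,\lambda)\geq c_0|m|$ and $\mathrm{Re}\,e^{i\alpha}((m,\lambda)-\lambda_s)\geq c_0|m|$; on the space $\mathfrak m\subset\mathcal O(\mathbb C^n)$ of power series vanishing at $0$ ($m_i\geq0$, $|m|\geq1$) one gets $\mathrm{Re}\,e^{i\alpha}(m,\lambda)\leq-c_1|m|$; and on $\mathcal g_\lambda^\perp$ the non-resonant eigenvalues $(m,\lambda)-\lambda_s$ are nonzero by definition for $|m|$ bounded (finitely many values) and of modulus $\geq c_1|m|-\max_s|\mathrm{Re}(e^{i\alpha}\lambda_s)|\geq 1$ for $|m|$ large. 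In each case $L_{\xi_0}$ is a linear isomorphism of the corresponding space whose inverse multiplies the coefficient of each monomial by a bounded factor, hence preserves convergence.

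\emph{The triangular perturbation and the main difficulty.} Next I would fix a weight $w(z^a\,\partial/\partial z_b)=\langle a,\mu\rangle-\mu_b$ with $\mu_1>\dots>\mu_n>0$ rationally independent, so that the weights form a discrete set and, on each of the three spaces, only finitely many monomials have weight in any half-line bounded away from the extremal value. Using that each $(j,p)\in R$ has $j\geq2$ and $p$ supported on $\{1,\dots,j-1\}$ with $|p|\geq1$, a direct bracket computation (as in Proposition~\ref{prop:4.5}(iii)) shows that $L_{\xi_R}$ \emph{strictly raises} $w$, by a fixed amount $\delta_0>0$ (the minimum of the finitely many numbers $\langle p,\mu\rangle-\mu_j$); the $-\mu_b$ term in $w$ is precisely what makes the direction-changing summands $z^l\,\partial/\partial z_{j-1}\mapsto z^l\,\partial/\partial z_j$ coming from the Jordan terms $b_jz_{j-1}\,\partial/\partial z_j$ weight-increasing as well. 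Hence $L_\xi=L_{\xi_0}\big(\operatorname{Id}+L_{\xi_0}^{-1}L_{\xi_R}\big)$ with $N:=L_{\xi_0}^{-1}L_{\xi_R}$ strictly $w$-raising: formal invertibility is then immediate (solve weight level by weight level starting from the extremal weight — equivalently $N$ is locally nilpotent on the monomial basis in the Laurent-tail case; injectivity because the extremal-weight component of $L_\xi g$ equals $L_{\xi_0}$ of that of $g$), and convergence of the inverse on the Fr\'echet/LF spaces involved is obtained by the standard majorant estimate of the Poincar\'e--Dulac theorem (see \cite{szucs_geometrical_1996}), using that $\|L_{\xi_0}^{-1}\|$ is bounded and $\xi_R$ has been made small. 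This is the technical heart, and the only place where real estimates enter. Reading off consequences: (iii) is the case of the two Laurent-tail spaces (for $k>0$, $k\neq n-1$ both groups vanish by (i)); (v) is the case of $\mathcal g_\lambda^\perp$, using Proposition~\ref{prop:4.5}(iv) to know $L_\xi$ maps it to itself; and (iv) follows by applying the isomorphism to $\mathfrak m$, noting that $\xi(0)=0$ gives $L_\xi(\mathcal O(\mathbb C^n))\subset\mathfrak m$ and $L_\xi(1)=0$, whence $\ker L_\xi=\mathbb C\cdot 1$ (alternatively, by Ito's Theorem~3 every leaf accumulates at $0$, so a $\xi$-invariant holomorphic function is constant) and $\operatorname{im}L_\xi=\mathfrak m$.
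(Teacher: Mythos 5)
Your items (i)--(ii) are correct and essentially the paper's computation (Stein cover $U_k=\{z_k\neq0\}$, Leray, Laurent expansions; the paper organizes it as a Mayer--Vietoris/K\"unneth induction, you read it off the \v{C}ech--Koszul complex directly), and your diagonal analysis of $L_{\xi_0}$ coincides with the paper's. For the perturbation $\xi_R$ you take a genuinely different route: the paper absorbs $\xi_R$ by \emph{global} arguments (upper semicontinuity of the kernels of the Duchamp--Kalka elliptic complexes on $S^{2n-1}$, invariance of their index under deformation, plus ad hoc power-series inductions for the $n=2$ case and for injectivity on $\mathcal{g}_\lambda^\perp$), whereas you invert $L_\xi=L_{\xi_0}(\mathrm{Id}+N)$ directly, using that $N=L_{\xi_0}^{-1}L_{\xi_R}$ strictly raises your weight $w$. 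The weight bookkeeping is right (both bracket terms produced by $z^p\,\partial/\partial z_j$, $(j,p)\in R$, shift $w$ by $\langle p,\mu\rangle-\mu_j>0$, including the Jordan terms), and the extremal-weight injectivity argument is a clean, unified replacement for the paper's case-by-case inductions. For (iii) your convergence estimate also closes up, because elements of $H^{n-1}(W,\sigma)$ converge on all of $(\mathbb C^*)^n$, hence have coefficients decaying faster than any geometric rate, and the monomials of the datum contributing to a fixed monomial of the solution have degree at least as large as the target, so the at most exponential amplification of the Neumann series $\sum_k(-N)^k$ is absorbed.

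The genuine gap is the convergence step in (iv) and (v). There the relevant spaces are global: $H^0(W,\sigma)=\mathcal O(\mathbb C^n)$ by Hartogs, and $\mathcal{g}_\lambda^\perp$ consists of everywhere-convergent series; surjectivity therefore requires solutions with \emph{infinite} radius of convergence. The ``standard majorant estimate of Poincar\'e--Dulac'' is a local statement at $0$, and applied to $\sum_k(-N)^k$ it only gives convergence on a fixed polydisc: each application of $N$ multiplies a coefficient by at most $C\max|a_{j,p}|$, but a low-degree coefficient of the datum feeds into coefficients of arbitrarily high degree $d$ of the solution through at least $(d-|m|)/P$ steps ($P=\max(|p|-1)$), so the damping is only exponential, of the form $\theta^{\,d-|m|}$ with $\theta\sim(C\max|a_{j,p}|)^{1/P}$; this yields a radius of convergence of order $1/\theta$ -- large when the $a_{j,p}$ are small, but not infinity -- and one cannot re-shrink the $a_{j,p}$ after the fact. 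So your mechanism proves injectivity everywhere and surjectivity only on a polydisc, which is strictly weaker than the statements ``image of $L_\xi$ on $H^0(W,\sigma)$ is all entire functions vanishing at $0$'' and ``$L_\xi|_{\mathcal{g}_\lambda^\perp}$ is onto $\mathcal{g}_\lambda^\perp$''. This is precisely the point the paper's index/semicontinuity argument is designed to handle. If you want to avoid that machinery, surjectivity in (iv) can be obtained globally from the dynamics: $F(z)=-\int_0^{\infty}G\bigl(\phi^{e^{i\alpha}s}(z)\bigr)e^{i\alpha}\,ds$ converges locally uniformly on $\mathbb C^n$ since every orbit tends to $0$ exponentially and $G(0)=0$; but note that the analogous pullback integral for vector fields does \emph{not} converge in general (the non-resonant eigenvalues $(m,\lambda)-\lambda_s$ are only nonzero, not of one sign), so (v) needs either the paper's argument or a further splitting of $\mathcal{g}_\lambda^\perp$ into a finite-dimensional low-degree block and a spectrally coercive tail.
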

\begin{proof}
    The points $(i)$ and $(ii)$ are proven in \cite{haefliger_deformations_1985}. We detail the proof for completeness.\\
    For $j \in \llbracket1,n \rrbracket$, let $U_j=\{(z_1, \cdots, z_n) \in W, \; z_j\neq0\} \cong \mathbb C^{n-1}\times \mathbb C^*$. Since $\mathbb C$ and $\mathbb C^*$ are both Stein manifolds, the product $U_j$ is also a Stein manifold (see \cite{grauert_theory_2004} for a reminder on Stein spaces and cohomology theory). Therefore, any finite intersection between the open sets $U_j$ for $j \in \llbracket1,n \rrbracket$ is a Stein manifold.
    Thus, $\mathcal{U}=(U_j)_{1\leq j \leq n}$ is an open cover of $W$ by Stein open sets. Since $\sigma$ is a coherent sheaf on $W$, by the theorem of Leray the cohomology groups $H^k(W, \sigma)$ can be computed using alternate cochains (the same is true for any intersection between the $U_j$'s). Since $\mathcal{U}$ is composed of $n$ elements, $H^k(W, \sigma)=0$ for every $k\geq n$.\\
    $(i)$ : We prove by induction on $n \geq 3$ that $H^k(\mathbb C^n \setminus0, \sigma)=0$ for every $k$ such that $0<k<n-1$.
    The idea is to write $$\mathbb C^n \setminus 0 =(\mathbb C^{n-1}\setminus 0 \times \mathbb C )\cup (\mathbb C^{n-1} \times \mathbb C^*),$$ use the Mayer-Vietoris sequence associated to this cover (see \cite{kashiwara_sheaves_1994}) as well as the Künneth formula (see \cite{sampson_kunneth_1959}), the fact that the cohomology groups (for $j \geq 1$) of each Stein manifold with respect to the coherent sheaf of holomorphic functions is zero and the induction hypothesis.\\
    Recall the Mayer-Vietoris sequence, and the Künneth formula :
    \begin{theorem}[Mayer-Vietoris]
        Let $\mathcal{G}$ a sheaf of functions on a topological space $X$. Let $U, V$ two open sets of $X$ whose union covers $X$. Denote by $\alpha$ the map which sends $s\in \mathcal{G}(\Omega)$ to the couple $(\restr{s}{\Omega \cap U}, \restr{s}{\Omega \cap V}) \in \mathcal{G}(\Omega \cap U) \times \mathcal{G}(\Omega\cap V)$, where $\Omega$ is an open subset of $X$ ; and by $\beta$ the map which sends $(s,s')\in\mathcal{G}(\Omega \cap U) \times \mathcal{G}(\Omega\cap V)$ to $\restr{s'}{\Omega \cap U\cap V}-\restr{s}{\Omega \cap U\cap V} \in \mathcal{G}(\Omega \cap U \cap V)$.\\
        Then, if we note $\alpha_k$ and $\beta_k$ the induced map between the $k$-th cohomology groups, there is an exact sequence 
        \[
\begin{tikzcd}
0 \ar[r] & H^0(X, \mathcal{G}) \ar[r,"\alpha_0"] & H^0(U,\restr{\mathcal{G}}{U}) \oplus H^0(V,\restr{\mathcal{G}}{V}) \ar[r,"\beta_0"] & H^0(U\cap V,\restr{\mathcal{G}}{U \cap V})\ar[dll,swap,"\delta_0"] \\
& H^1(X, \mathcal{G}) \ar[r,"\alpha_1"] & H^1(U,\restr{\mathcal{G}}{U}) \oplus H^0(V,\restr{\mathcal{G}}{V}) \ar[r,"\beta_1"] & H^1(U\cap V,\restr{\mathcal{G}}{U \cap V})\ar[r] & \cdots
\end{tikzcd}
\]
    \end{theorem}
    \begin{theorem}[Künneth]
        Let $\mathcal{F}$ (respectively $ \mathcal{G}$) a coherent analytic sheaf on a complex manifold $X$ (respectively $Y$). Denote by $\text{pr}_1:X \times Y \to X$ the projection onto the first factor and $\text{pr}_2:X \times Y \to Y$ the projection onto the second factor.\\
        Then for every $k \in \mathbb N$, 
        $$H^k(X \times Y, \text{pr}_1^*\mathcal{F} \otimes_{\mathcal{O}_{X \times Y}}\text{pr}_2^*\mathcal{G}) 
    \cong \bigoplus_{j=0}^k H^j(X,\mathcal{F})\otimes_\mathbb C H^{k-j}(Y, \mathcal{G}),$$
    where the notation $f^*$ refers to the analytic inverse under $f$.
    \end{theorem}
    \noindent In order to simplify the diagram, we will note $\sigma$ the sheaf of holomorphic functions for every open set involved, and write the sheaves $\sigma$ as indexes. In our case, it is immediate to see that for every $X, Y$ involved,
    $$\text{pr}_1^*\mathcal{F} \otimes_{\mathcal{O}_{X \times Y}}\text{pr}_2^*\mathcal{G}=\mathcal{O}_{X \times Y}.$$
    Therefore, there won't be any confusion.\\
    We first prove the result for $n=3$ in order to make things clearer, and then for any $n \geq 3$ by induction. \\
    We write the beginning of the Mayer-Vietoris sequence associated to the cover $$\mathbb C^3 \setminus 0=(\mathbb C^{2}\setminus 0 \times \mathbb C )\cup (\mathbb C^{2} \times \mathbb C^*) :$$
\[
\begin{tikzcd}
0 \ar[r] & H^0_\sigma(\mathbb C^3) \ar[r,"\alpha_0"] & H^0_\sigma(\mathbb C^2\setminus0 \times \mathbb C) \oplus H^0_\sigma(\mathbb C^2 \times \mathbb C^*) \ar[r,"\beta_0"] & H^0_\sigma(\mathbb C^2\setminus0 \times \mathbb C^*) \ar[dll,swap,"\delta_0"] \\
& H^1_\sigma(\mathbb C^3\setminus0) \ar[r,"\alpha_1"] & H^1_\sigma(\mathbb C^2\setminus0 \times \mathbb C) \oplus H^1_\sigma(\mathbb C^2 \times \mathbb C^*)\ar[r,"\beta_1"] & H^1_\sigma(\mathbb C^2\setminus0 \times \mathbb C^*) \ar[dll,swap,"\delta_1"] \\
& H^2_\sigma(\mathbb C^3\setminus0) \ar[r,"\alpha_2"] & H^2_\sigma(\mathbb C^2\setminus0 \times \mathbb C) \oplus H^2_\sigma(\mathbb C^2 \times \mathbb C^*) \ar[r,"\beta_2"] & H^2_\sigma(\mathbb C^2\setminus0 \times \mathbb C^*) \ar[r] & \cdots
\end{tikzcd}
\]
Now, we remark that :
\begin{itemize}
    \item for every $k\geq 1$, $H^k_\sigma(\mathbb C^2 \times \mathbb C^*) = 0$ since $\mathbb C^2 \times \mathbb C^*$ is a Stein manifold by product, and $\sigma$ is a coherent sheaf ;
    \item by the Künneth formula, for every $k\geq 1$,  $$H^k_\sigma(\mathbb C^2 \setminus 0\times \mathbb C)=H^k_\sigma(\mathbb C^2\setminus0) \otimes H^0_\sigma(\mathbb C) \quad \text{ and }\quad H^k_\sigma(\mathbb C^2 \setminus 0\times \mathbb C^*)=H^k_\sigma(\mathbb C^2\setminus0) \otimes H^0_\sigma(\mathbb C^*)$$ since again $\mathbb C$ and $ \mathbb C^*$ are both Stein.
\end{itemize}
Therefore, since $H^k_\sigma(\mathbb C^n \setminus 0)=0$ for every $k \geq n$, the long exact sequence simplifies to :
\[
\begin{tikzcd}
0 \ar[r] & H^0_\sigma(\mathbb C^3) \ar[r,"\alpha_0"] & H^0_\sigma(\mathbb C^2\setminus0 \times \mathbb C) \oplus H^0_\sigma(\mathbb C^2 \times \mathbb C^*) \ar[r,"\beta_0"] & H^0_\sigma(\mathbb C^2\setminus0 \times \mathbb C^*) \ar[dll,swap,"\delta_0"] \\
& H^1_\sigma(\mathbb C^3\setminus0) \ar[r,"\alpha_1"] & H^1_\sigma(\mathbb C^2\setminus0 \times \mathbb C) \oplus 0\ar[r,"\beta_1"] & H^1_\sigma(\mathbb C^2\setminus0 \times \mathbb C^*) \ar[dll,swap,"\delta_1"] \\
& H^2_\sigma(\mathbb C^3\setminus0) \ar[r,"\alpha_2"] & 0\ar[r,"\beta_2"] & 0
\end{tikzcd}
\]
\begin{claim}
    $\beta_0$ is surjective and $\beta_1$ is injective.
\end{claim}
\begin{proof}
    This will be proven more generally in the induction process.
\end{proof}
\noindent By exactness of this long sequence, it comes that $\delta_0$ is the zero map and is also surjective, which implies that $H^1_\sigma(\mathbb C^3\setminus 0)=0$.\\
Now assume, for $n\geq 3$ fixed, that $H^k(\mathbb C^n\setminus 0, \sigma)=0$ for every $k$ such that $0<k<n-1$.\\
We write the Mayer-Vietoris sequence associated to the cover $$\mathbb C^{n+1} \setminus 0=(\mathbb C^{n}\setminus 0 \times \mathbb C )\cup (\mathbb C^{n} \times \mathbb C^*) :$$
\[
\begin{tikzcd}
0 \ar[r] & H^0_\sigma(\mathbb C^{n+1}) \ar[r,"\alpha_0"] & H^0_\sigma(\mathbb C^n\setminus0 \times \mathbb C) \oplus H^0_\sigma(\mathbb C^n \times \mathbb C^*) \ar[r,"\beta_0"] & H^0_\sigma(\mathbb C^n\setminus0 \times \mathbb C^*) \ar[dll,swap,"\delta_0"] \\
& H^1_\sigma(\mathbb C^{n+1}\setminus0) \ar[r,"\alpha_1"] & H^1_\sigma(\mathbb C^n\setminus0 \times \mathbb C) \oplus H^1_\sigma(\mathbb C^n \times \mathbb C^*)\ar[r,"\beta_1"] & H^1_\sigma(\mathbb C^n\setminus0 \times \mathbb C^*) \ar[dll,swap,"\delta_1"] \\
& H^2_\sigma(\mathbb C^{n+1}\setminus0) \ar[r,"\alpha_2"] & H^2_\sigma(\mathbb C^n\setminus0 \times \mathbb C) \oplus H^2_\sigma(\mathbb C^n \times \mathbb C^*) \ar[r,"\beta_2"] & H^2_\sigma(\mathbb C^n\setminus0 \times \mathbb C^*) \ar[dl]\\
& & \cdots \ar[dl] & \\
& H^n_\sigma(\mathbb C^{n+1}\setminus0) \ar[r,"\alpha_n"] & H^n_\sigma(\mathbb C^n\setminus0 \times \mathbb C) \oplus H^n_\sigma(\mathbb C^n \times \mathbb C^*) \ar[r,"\beta_n"] & H^n_\sigma(\mathbb C^n\setminus0 \times \mathbb C^*)
\end{tikzcd}
\]
Now, we remark that :
\begin{itemize}
    \item for every $k\geq 1$, $H^k_\sigma(\mathbb C^n \times \mathbb C^*) = 0$ since $\mathbb C^n \times \mathbb C^*$ is a Stein manifold by product, and $\sigma$ is a coherent sheaf ;
    \item by the Künneth formula, for every $k\geq 1$,  $$H^k_\sigma(\mathbb C^n \setminus 0\times \mathbb C)=H^k_\sigma(\mathbb C^n\setminus0) \otimes H^0_\sigma(\mathbb C) \quad \text{ and }\quad H^k_\sigma(\mathbb C^n \setminus 0\times \mathbb C^*)=H^k_\sigma(\mathbb C^n\setminus0) \otimes H^0_\sigma(\mathbb C^*)$$ since again $\mathbb C$ and $ \mathbb C^*$ are both Stein.
\end{itemize}
Therefore, since $H^k_\sigma(\mathbb C^{n+1} \setminus 0)=0$ for every $k \geq n+1$, the long exact sequence simplifies to :
\[
\begin{tikzcd}
0 \ar[r] & H^0_\sigma(\mathbb C^{n+1}) \ar[r,"\alpha_0"] & H^0_\sigma(\mathbb C^n\setminus0 \times \mathbb C) \oplus H^0_\sigma(\mathbb C^n \times \mathbb C^*) \ar[r,"\beta_0"] & H^0_\sigma(\mathbb C^n\setminus0 \times \mathbb C^*) \ar[dll,swap,"\delta_0"] \\
& H^1_\sigma(\mathbb C^{n+1}\setminus0) \ar[r,"\alpha_1"] & 0 \ar[r,"\beta_1"]& 0 \ar[dll,swap,"\delta_1"] \\
& H^2_\sigma(\mathbb C^{n+1}\setminus0) \ar[r,"\alpha_2"] & 0 \ar[r,"\beta_2"] & 0 \ar[dl] \\
&  & \cdots\ar[dl] & \\
& H^{n-2}_\sigma(\mathbb C^{n+1}\setminus0) \ar[r,"\alpha_{n-2}"] & 0 \ar[r,"\beta_{n-2}"]& 0 \ar[dll,swap,"\delta_{n-2}"] \\
& H^{n-1}_\sigma(\mathbb C^{n+1}) \ar[r,"\alpha_{n-1}"] & H^{n-1}_\sigma(\mathbb C^n\setminus0 \times \mathbb C) \ar[r,"\beta_{n-1}"] & H^{n-1}_\sigma(\mathbb C^n\setminus0 \times \mathbb C^*) \ar[dll,swap,"\delta_{n-1}"] \\
& H^n_\sigma(\mathbb C^{n+1}\setminus0) \ar[r,"\alpha_n"] & 0 \ar[r,"\beta_n"] & 0 
\end{tikzcd}
\]
This proves that $H^k(\mathbb C^{n+1}\setminus0, \sigma)=0$ for every $k$ such that $1<k<n-1$.
\begin{claim}
    $\beta_0$ is surjective and $\beta_{n-1}$ is injective.
\end{claim}
\begin{proof}
    $\beta_0$ is surjective by Hartog's extension theorem since $n\geq 2$. As for the map $\beta_{n-1} : H^{n-1}_\sigma(\mathbb C^n\setminus0 \times \mathbb C) \to H^{n-1}_\sigma(\mathbb C^n\setminus0 \times \mathbb C^*) $, recall that it sends the cohomology class of $g_{1,\ldots,n} \in \sigma(U_1 \cap \cdots \cap U_n)$ to the cohomology class of $\restr{g_{1,\ldots,n}}{U_{n+1}} \in \sigma(U_1 \cap \cdots \cap U_n\cap U_{n+1})$, where $U_j=\{(z_1, \cdots, z_n) \in \mathbb C^{n+1}\setminus 0, \; z_j\neq0\}$ for $j \in \llbracket1,n+1 \rrbracket$ (since we can again compute the cohomology groups with the Cëch cohomology).
    Assume $$\restr{g_{1,\ldots,n}}{U_{n+1}}=\sum_{j=1}^{n}(-1)^{j-1}l_{1,\cdots,\hat{j}, \cdots,n}$$
    where, for $j\in \llbracket1,n \rrbracket$, $l_{1,\cdots,\hat{j}, \cdots,n} $ is holomorphic on $U_1\cap \cdots \cap U_{\hat{j}}\cap \cdots \cap U_n \cap U_{n+1}=(\mathbb C^*)^{j-1}\times \mathbb C \times (\mathbb C^*)^{n-j} \times \mathbb C^*$.
    Write $g_{1, \cdots, n}=\sum_{m \in \mathbb Z^{n+1}}a_mz^m$ where $a_m=0$ if $m_{n+1}<0$, and $l_{1,\cdots,\hat{j}, \cdots,n}=\sum_{m\in \mathbb Z^{n+1}}c_m^{(j)}z^m$ where $c_m^{(j)}=0$ if $m_j<0$. Then, by properties of power series, the previous equality gives for every $m \in \mathbb Z^{n+1}$ such that $m_{n+1}<0$ :
    $$0=\sum_{j=1}^{n}(-1)^{j-1}c_m^{(j)}.$$
    Thus
    \begin{align*}
    \restr{g_{1,\ldots,n}}{U_{n+1}}&=\sum_{j=1}^{n}(-1)^{j-1}l_{1,\cdots,\hat{j}, \cdots,n}=\sum_{\substack{m \in \mathbb Z^{n+1}\\m_{n+1}\geq 0}}\left(\sum_{j=1}^{n}(-1)^{j-1}c_m^{(j)}\right)z^m\\
    &=\restr{\sum_{j=1}^{n}(-1)^{j-1}L_{1,\cdots,\hat{j}, \cdots,n}}{U_{n+1}}
    \end{align*}
    where $L_{1,\cdots,\hat{j}, \cdots,n} =\sum_{\substack{m\in \mathbb Z^{n+1}\\m_j\geq 0\\m_{n+1}\geq 0}}c_m^{(j)}z^m $ is holomorphic on $U_1\cap \cdots \cap U_{\hat{j}}\cap \cdots \cap U_n$. By properties of power series, since $U_1\cap \cdots\cap U_n$ is connected, $$g_{1, \ldots, n}=\sum_{j=1}^{n}(-1)^{j-1}L_{1,\cdots,\hat{j}, \cdots,n}.$$
    This proves exactly that the cohomology class of $g_{1, \cdots, n}$ is $0\in  H^{n-1}_\sigma(\mathbb C^n\setminus0 \times \mathbb C)$ and that $\beta_{n-1}$ is therefore injective.
\end{proof}
By exactness of this sequence, it comes that $\delta_0$ is the zero map and is also surjective, which implies that $H^1_\sigma(\mathbb C^{n+1}\setminus 0)=0$ ; and that $\delta_{n-2}$ is the zero map and is also surjective, which implies that $H^{n-1}_\sigma(\mathbb C^{n+1}\setminus 0)=0$.\\
Since $\theta\cong\sigma^{\oplus n}$ and $W$ is covered by the open stein sets $(U_i)_{1\leq i \leq n}$, we can compute the cohomology groups of both these sheaves by Cëch cohomology and prove naturally that for every $k \in \mathbb N$, $$(H^k(W, \sigma))^{\oplus n} \cong H^k(W, \theta).$$
This proves $(i)$.\\
$(ii)$ : An element of $H^{n-1}(W, \sigma)$ is an equivalence class of a holomorphic function $g_{1\cdots n}$ on $U_1 \cap \cdots \cap U_n=(\mathbb C^*)^n$, where two holomorphic functions $g_{1\cdots n}$ and $h_{1,\cdots,n}$ on $(\mathbb C^*)^n$ are equivalent if and only if for every $j\in \llbracket1, n \rrbracket$, there exists a holomorphic function $l_{1,\cdots,\hat{j}, \cdots,n}$ on $(\mathbb C^*)^{j-1}\times \mathbb C \times (\mathbb C^*)^{n-j}$ such that 
$$g_{1, \cdots,n}-h_{1,\cdots,n}=\sum_{j=1}^n(-1)^{j-1}l_{1,\cdots,\hat{j}, \cdots,n}.$$
If we write as power series $g_{1, \cdots, n}=\sum_{m \in \mathbb Z^n}a_mz^m$ and $h_{1, \cdots, n}=\sum_{m \in \mathbb Z^n}b_mz^m$, then the previous equality implies : for every $m\in \mathbb Z$ whose components are all strictly negative, $a_m=b_m$. We can therefore define a map which assigns to the equivalence class of the cocycle $g_{1, \ldots,n}=\sum_{m \in \mathbb Z^n}a_mz^m$ the convergent serie $\sum_{\substack{m\in \mathbb Z^n\\ \forall j,m_j<0}}a_mz^m$ on $(\mathbb C^*)^n$. It is surjective, and if a holomorphic function $g_{1, \ldots,n}=\sum_{m \in \mathbb Z^n}a_mz^m$ satisfies $$\sum_{\substack{m\in \mathbb Z^n\\ \forall j,m_j<0}}a_mz^m=0,$$ then
\begin{align*}
    g=\sum_{\substack{m\in \mathbb Z^n\\ \exists j,m_j\geq0}}a_mz^m=\sum_{j=1}^n\sum_{\substack{m\in \mathbb Z^n\\m_j\geq0}}a_mz^m=\sum_{j=1}^n(-1)^{j-1}l_{1,\cdots,\hat{j}, \cdots,n}
\end{align*}
where, for $j \in \llbracket 1,n \rrbracket$, 
$$l_{1,\cdots,\hat{j}, \cdots,n}=(-1)^{j-1}\sum_{\substack{m\in \mathbb Z^n\\m_j\geq0}}a_mz^m$$
is  holomorphic on $(\mathbb C^*)^{j-1}\times \mathbb C \times (\mathbb C^*)^{n-j}$. It is thus injective also.\\
The remark at the end of the previous point concludes. This proves $(ii)$.\\
$(iii)$ : For every $k\neq 0$ different from $n-1$, the result is immediate by point $(i)$.
First assume $\xi$ is equal to the diagonal vector field 
$$\xi_0=\sum_{j=1}^n\lambda_jz_j \dfrac{\partial}{\partial z_j}.$$
We compute, for $f=\sum_{\substack{m \in \mathbb Z^n\\ \forall i, m_i<0}}a_m z^m \in H^{n-1}(W,\sigma)$ : 
\begin{align*}
    L_\xi(f)=\sum_{j=1}^n\lambda_jz_j \dfrac{\partial f}{\partial z_j}=  \sum_{j=1}^n\sum_{\substack{m \in \mathbb Z^n\\ \forall i, m_i<0}}a_m\lambda_jz_j m_jz^{m-e_j}=\sum_{\substack{m \in \mathbb Z^n\\ \forall i, m_i<0}}a_m(m,\lambda)z^m.
\end{align*}
Now, if we note $$ \mathcal{H}(\lambda):= \left\{\sum_{j=1}^n t_j \lambda_j, \; (t_1, \ldots, t_n)\in (\mathbb R^+)^n, \; \sum_{j=1}^n t_j=1 \right \}$$
the convex hull of $\{\lambda_1, \cdots, \lambda_n\}$, 
the distance $\delta$ (for the modulus) from $0$ to $\mathcal{H}(\lambda)$ is strictly positive since $0\in  \mathbb C$ does not belong to $\mathcal{H}(\lambda)$ and the latter is closed in $\mathbb C$. Therefore, for every $(t_1, \ldots, t_n)\in (\mathbb R^+)^n\cup (\mathbb R^-)^n $,
$$\left | \sum_{j=1}^n t_j \lambda_j\right | \geq \delta\left | \sum_{j=1}^n t_j\right |.$$
That being said, $L_\xi :H^{n-1}(W,\sigma)\to H^{n-1}(W,\sigma)$ is injective since for every $m\in \mathbb Z^n$ whose coefficients are all strictly negative, $(m, \lambda)$ is not zero by the above.\\
As for surjectivity, consider an element $g=\sum_{\substack{m \in \mathbb Z^n\\ \forall i, m_i<0}}b_m z^m \in H^{n-1}(W,\sigma)$. 
For a tuple $m \in \mathbb Z^n$ whose coefficients are all strictly negative, let $a_m=\dfrac{b_m}{(m,\lambda)}$. Then by what precedes, 
$$|a_m|\leq \dfrac{1}{\delta}\cdot \dfrac{|b_m|}{|m|}$$
which shows that $\sum_{\substack{m \in \mathbb Z^n\\ \forall i, m_i<0}}a_m z^m $ defines an element $f$ of $H^{n-1}(W,\sigma)$, satisfying moreover $L_\xi(f)=g$.\\
In the more general case where $\xi$ can be written as
$$\sum_{j=1}^n\lambda_jz_j \dfrac{\partial}{\partial z_j}+\sum_{(j,m)\in R} a_{j,m}z^m\dfrac{\partial}{\partial z_j},$$
to prove that $L_\xi:H^{k}(W,\sigma)\to H^{k}(W,\sigma)$ is an isomorphism for every $k \geq 1$, we use the preceding result for $\xi_0$ as well as the long exact sequences of cohomology groups and an argument coming from the theory of elliptic complexes.\\
First note that, by the long exact sequence of cohomology group, this statement is equivalent to : $$\forall k\geq 2, \; H^k(W, \sigma^{tr}_\mathcal{F})=0 \quad \text{ and } \quad L_\xi:H^{1}(W,\sigma)\to H^{1}(W,\sigma) \text{ is injective}.$$
Since for every $k\geq 1$ the map $L_{\xi_0}:H^{k}(W,\sigma)\to H^{k}(W,\sigma)$ is an isomorphism, it comes, by Proposition \ref{prop:5.3}, that for $k\geq 2$ : $$H^k(W, \sigma^{tr}_{\mathcal{F}(\xi_0)})=H^k(S^{2n-1}, \sigma^{tr}_{\mathcal{F}_0(\xi_0)})=0.$$
We use now the fact, as proved in \cite{duchamp_deformation_1979}, that the cohomology groups $H^k(S^{2n-1}, \sigma^{tr}_{\mathcal{F}_0(\xi)})$ are isomorphic to the cohomology groups of an elliptic complex, so each of them is the kernel of an elliptic differential operator (see \cite{wells_differential_2008}). 
Its (finite) dimension is an upper semi-continuous function of the components of $\xi_R=\xi-\xi_0=\sum_{(j,m)\in R} a_{j,m}z^m\dfrac{\partial}{\partial z_j}$, which as we said can be initially taken as small as we want.
That being said and done, it comes that $H^k(W, \sigma^{tr}_{\mathcal{F}(\xi)})=H^k(S^{2n-1}, \sigma^{tr}_{\mathcal{F}_0(\xi)})=0$ for every $k\geq 2$. We now prove that $L_{\xi}:H^{1}(W,\sigma)\to H^{1}(W,\sigma)$ is injective. If $n\geq 3$, this is immediate by point $(i)$.
In the case $n=2$, we write for $f=\sum_{\substack{m \in \mathbb Z^n\\ \forall i, m_i<0}}b_m z^m \in H^{1}(W,\sigma)$ :
\begin{align*}
    L_\xi(f)=\sum_{\substack{m \in \mathbb Z^2\\ \forall i, m_i<0}}b_m(m,\lambda)z^m+\sum_{\substack{m \in \mathbb Z^2\\ \forall i, m_i<0}}b_mm_2\sum_{(2,l)\in R}a_{2,l}z^{m+l_1e_1-e_2}.
\end{align*}
The second sum does not contain any monomial $z_1^{k_1}z_2^{-1}$ where $k_1<0$. Therefore, if $L_\xi(f)=0$, necessarily for every $m\in \mathbb Z^2$ such that $m_1<0$ and $m_2=-1$, $b_m(m,\lambda)=0$ i.e. $b_m=0$.
By induction on $m_2 \in \mathbb Z_{<0}$, we prove that $b_m=0$ for every $m\in \mathbb Z^2$ whose components are strictly negative.
It suffices to look at the monomial $z_1^{k_1}z_2^{k_2-1}$, for every $k_1<0$, and for $k_2<0$ fixed, knowing that $b_m=0$ for every $m\in \mathbb Z^2$ such that $m_1<0$ and $ k_2\leq m_2\leq-1$.
This monomial corresponds to $m_2=k_2$ in the second sum but it does not appear for any $k_1<0$ since $b_{k_1,k_2}=0$ by induction hypothesis.
Therefore, the first sum gives $b_{k_1,k_2-1}=0$ for any $k_1<0$ which proves the result for the sheaf of holomorphic functions $\sigma$.\\
As for the sheaf of holomorphic vector fields $\theta$, we first assume $\xi$ is equal to the diagonal vector field 
$$\xi_0=\sum_{j=1}^n\lambda_jz_j \dfrac{\partial}{\partial z_j},$$
and  compute, for $X=\sum_{s=1}^n\sum_{\substack{m \in \mathbb Z^n\\ \forall i, m_i<0}}b_m^{(s)} z^m \dfrac{\partial}{\partial z_s}\in H^{n-1}(W,\theta)$: 
\begin{align*}
    L_{\xi_0}(X)&=\sum_{s=1}^n\sum_{\substack{m \in \mathbb Z^n\\ \forall i, m_i<0}}b_m^{(s)} L_{\xi_0}\left(z^m \dfrac{\partial}{\partial z_s} \right) = \sum_{j=1}^n\sum_{s=1}^n\sum_{\substack{m \in \mathbb Z^n\\ \forall i, m_i<0}}b_m^{(s)}\lambda_j [z_j\cfrac{\partial}{\partial z_j},z^m\dfrac{\partial}{\partial z_s}]\\
    &=\sum_{j=1}^n\sum_{s=1}^n\sum_{\substack{m \in \mathbb Z^n\\ \forall i, m_i<0}}b_m^{(s)}\lambda_j (m_j \dfrac{z^{m+e_j}}{z_j}\dfrac{\partial}{\partial z_s}-\delta_{s,j} \dfrac{z^{m+e_j}}{z_j} \dfrac{\partial}{\partial z_s})\\
    &=\sum_{s=1}^n\sum_{\substack{m \in \mathbb Z^n\\ \forall i, m_i<0}}b_m^{(s)}\left((m,\lambda)- \lambda_s \right )z^m\dfrac{\partial}{\partial z_s}=\sum_{s=1}^n\sum_{\substack{m \in \mathbb Z^n\\ \forall i, m_i<0}}b_m^{(s)} (m-e_s,\lambda) z^m\dfrac{\partial}{\partial z_s}.
\end{align*}
The exact preceding arguments for the sheaf of holomorphic functions still work in that case since if $m\in \mathbb Z^n$ is a tuple whose coefficients are all strictly negative, $m-e_s$ is also such a tuple. This proves that $L_{\xi_0} :H^{n-1}(W,\theta)\to H^{n-1}(W,\theta)$ is an isomorphism. \\
In the more general case where $\xi$ can be written as
$$\sum_{j=1}^n\lambda_jz_j \dfrac{\partial}{\partial z_j}+\sum_{(j,m)\in R} a_{j,m}z^m\dfrac{\partial}{\partial z_j},$$
we will proceed in exactly the same way as we did before for the sheaf $\sigma$. We only need to prove that $L_{\xi}:H^{1}(W,\theta)\to H^{1}(W,\theta)$ is injective when $n=2$.
We write for $X=\sum_{\substack{1\leq s \leq 2\\m \in \mathbb Z^2\\ \forall i, m_i<0}}b_m^{(s)} z^m \dfrac{\partial}{\partial z_s}\in H^{1}(W,\theta)$: 

\begin{align*}
        L_{\xi}(X)&=\sum_{\substack{1 \leq s \leq 2 \\m \in \mathbb Z^2\\ \forall i, m_i<0}}b_m^{(s)} (m-e_s,\lambda) z^m\dfrac{\partial}{\partial z_s}+\sum_{\substack{1 \leq s \leq 2 \\(2,l)\in R\\m \in \mathbb Z^2\\ \forall i, m_i<0}}b_{m}^{(s)}a_{2,l}[z^l\dfrac{\partial}{\partial z_2}, z^m\dfrac{\partial}{\partial z_s}]\\
        &=\sum_{\substack{1 \leq s \leq 2 \\m \in \mathbb Z^2\\ \forall i, m_i<0}}b_m^{(s)} (m-e_s,\lambda) z^m\dfrac{\partial}{\partial z_s}+\sum_{\substack{1 \leq s \leq 2 \\(2,l)\in R\\m \in \mathbb Z^2\\ \forall i, m_i<0}}b_{m}^{(s)}a_{2,l}m_2z^{m+l_1e_1-e_2}\dfrac{\partial}{\partial z_s}\sum_{\substack{(2,l)\in R\\m \in \mathbb Z^2\\ \forall i, m_i<0}}b_{m}^{(1)}a_{2,l}l_1z^{m+l_1e_1-e_1}\dfrac{\partial}{\partial z_2}.
    \end{align*}
    The same idea of proof applies : we prove that $b^{(s)}_{m}=0$ for $s=1$, $m_1<0$ and $m_2=-1$ then for $s=1$, $m_1<0$ and $m_2<0$ by induction on $m_2 \in \mathbb Z_{<0}$. Then we prove that $b^{(s)}_{m}=0$ for $s=2$, $m_1<0$ and $m_2=-1$ then for $s=2$, $m_1<0$ and $m_2<0$ by induction on $m_2 \in \mathbb Z_{<0}$. This concludes the proof of $(iii)$.\\
$(iv)$ : First assume that $\xi$ is equal to $\xi_0$. We compute, for $f=\sum_{m\in \mathbb N^n}a_m z^m \in H^{0}(W,\sigma)$: 
\begin{align*}
    L_\xi(f)=\sum_{m\in \mathbb N^n\setminus 0}a_m (m,\lambda) z^m.
\end{align*}
If $L_\xi(f)=0$, then since $(m,\lambda)$ is not zero for every $m\in \mathbb N^n\setminus0$ (again, $0 \notin \mathcal{H}(\lambda)$), it comes that $f$ is constant equal to $a_0 \in \mathbb C$. The converse is also true. This proves that $\ker(L_{\xi_0}:H^0(W, \sigma) \to H^0(W, \sigma))$ is equal to $\text{Vect}_\mathbb C(1)$.\\
Let $g= \sum_{m\in \mathbb N^n\setminus0}b_m z^m \in H^0(W,\sigma)$. As before, if we let $a_m=\dfrac{b_m}{(m, \lambda)}$ for $m\in \mathbb N^n \setminus 0$, the power serie $\sum_{m\in \mathbb N^n\setminus0}a_m z^m$ defines an element $f$ of $H^0(W, \sigma)$ which satisfies $L_\xi(f)=g$. This proves that $\text{Vect}_\mathbb C(1)$ is a complementary subspace to $\text{Im}(L_{\xi_0}:H^0(W, \sigma) \to H^0(W, \sigma))$ in $H^0(W, \sigma)$.\\
From the long exact sequence associated to \eqref{eq:1}, it comes on one hand 
$$\dim(H^0(S^{2n-1}, \sigma^{tr}_{\mathcal{F}_0(\xi_0)}))=\dim(H^0(W, \sigma^{tr}_{\mathcal{F}(\xi_0)}))=\dim(\ker(L_{\xi_0}:H^0(W, \sigma) \to H^0(W, \sigma)))=1$$
and on the other hand, since $\delta : H^0(W, \sigma) \to H^1(W, \sigma^{tr}_{\mathcal{F}(\xi_0)})$ is surjective because $L_\xi:H^1(W, \sigma) \to H^1(W, \sigma)$ is injective :
$$\dim(H^1(S^{2n-1}, \sigma^{tr}_{\mathcal{F}_0(\xi_0)}))=\dim(H^1(W, \sigma^{tr}_{\mathcal{F}(\xi_0)}))=\dim(\text{Im}(\delta))=1$$
because $\dim(\text{Im}(\delta))$ is equal to the dimension of a complementary subspace to $\ker(\delta)=\text{Im}(L_{\xi_0}:H^0(W, \sigma) \to H^0(W, \sigma))$ in $H^0(W, \sigma)$, which is one by the above. Therefore, since $$H^k(S^{2n-1}, \sigma^{tr}_{\mathcal{F}_0(\xi_0)})=H^k(W, \sigma^{tr}_{\mathcal{F}(\xi_0)})=0$$
for every $k \geq 2$ by point $(iii)$, it comes $\sum_{k\in \mathbb N}(-1)^k\dim (H^k(S^{2n-1}, \sigma^{tr}_{\mathcal{F}_0(\xi_0)}))=0$. This number is the index of an elliptic complex so it is constant under deformation (see \cite{wells_differential_2008}, \cite{atiyah_index_1963}, \cite{atiyah_index_1968} for example).
Therefore, even in the general case when $\xi=\xi_0+\xi_R$, since $$H^k(S^{2n-1}, \sigma^{tr}_{\mathcal{F}_0(\xi)})=H^k(W, \sigma^{tr}_{\mathcal{F}(\xi)})=0$$
for every $k \geq 2$ by point $(iii)$, it comes
$$\dim (H^0(W, \sigma^{tr}_{\mathcal{F}(\xi)}))=\dim (H^0(S^{2n-1}, \sigma^{tr}_{\mathcal{F}_0(\xi)}))=\dim (H^1(S^{2n-1}, \sigma^{tr}_{\mathcal{F}_0(\xi)}))=\dim (H^1(W, \sigma^{tr}_{\mathcal{F}(\xi)})).$$
By the same upper-semi continuity of $\dim (H^0(W, \sigma^{tr}_{\mathcal{F}(\xi)}))$ argument, this number (if the components $a_{j,m}$ of $\xi_R$ are initially chosen sufficiently small) is less or equal to $1$. But it is also greater or equal, thus equal, to $1$ because $$\text{Vect}_\mathbb C(1)\subset\ker(L_{\xi}:H^0(W, \sigma) \to H^0(W, \sigma))= H^0(W, \sigma^{tr}_{\mathcal{F}(\xi)}).$$
Thus, since $\delta$ is surjective (for the same reasons than before by point $(iii)$), the cokernel of $\text{Im}(L_\xi:H^0(W, \sigma) \to H^0(W,\sigma))$ is of dimension $1$. But we know that $1 \notin \text{Im}(L_\xi:H^0(W, \sigma) \to H^0(W,\sigma))$ because
$$L_\xi\left(\sum_{m\in \mathbb N^n}b_m z^m \right)=\sum_{m \in \mathbb N^n\setminus 0}b_m(m,\lambda)z^m+\sum_{\substack{m\in \mathbb N^n\setminus 0\\(j,l)\in R}}b_mm_ja_{j,l}z^{m+l-e_j}.$$
Therefore $\text{coker}(L_\xi:H^0(W, \sigma) \to H^0(W,\sigma)) = \text{Vect}_\mathbb C(1)$ i.e. 
$$H^0(W,\sigma)=\text{Im}(L_\xi)\oplus \text{Vect}_\mathbb C(1) $$
and $L_\xi:H^0(W, \sigma) \to H^0(W,\sigma)$ surjects on the space of holomorphic functions on $\mathbb C^n$ vanishing at $0$. This proves $(iv)$.\\
$(v)$ : We first prove that $\restr{L_\xi}{\mathcal{g}_{\lambda}^\perp}$ is injective. We write for $X=\sum_{\substack{1 \leq s \leq n \\m \in \mathbb N^n\\ \lambda_s \neq (m, \lambda)}}b_m^{(s)} z^m \dfrac{\partial}{\partial z_s}\in \mathcal{g}_\lambda^\perp$: 

\begin{align*}
        L_{\xi}(X)=\sum_{\substack{1 \leq s \leq n \\m \in \mathbb N^n\\ \lambda_s\neq (m,\lambda)}}b_m^{(s)} ((m,\lambda)-\lambda_s) z^m\dfrac{\partial}{\partial z_s}&+\sum_{\substack{1 \leq s,j \leq n \\m \in \mathbb N^n\\(j,l)\in R\\\lambda_s \neq(m, \lambda)}}b_{m}^{(s)}a_{j,l}m_jz^{m+l-e_j}\dfrac{\partial}{\partial z_s}\\&-\sum_{\substack{1 \leq s,j \leq n \\m \in \mathbb N^n\\(j,l)\in R\\\lambda_s \neq(m, \lambda)}}b_{m}^{(s)}a_{j,l}l_sz^{m+l-e_s}\dfrac{\partial}{\partial z_j}.
    \end{align*}
If $L_\xi(X)=0$, on can prove, for $s \in \llbracket1,n\rrbracket$ fixed, that $b^{(s)}_m=0$ for every $m\in \mathbb N^n$ such that $\lambda_s\neq (m, \lambda)$. The idea is the following.
Let $s=1$. The monomial $z^{e_n}\dfrac{\partial}{\partial z_1}$ appears only in the first sum by definition of $R$, with coefficient $b^{(1)}_{e_n}((e_n,\lambda)-\lambda_1)$. By properties of power series, $b^{(1)}_{e_n}=0$. By decreasing induction on $k\in \llbracket1,n \rrbracket$, we prove that $b^{(1)}_{e_k}=0$ for every $k\in \llbracket1,n \rrbracket$. 
The monomial $z^{e_k}\dfrac{\partial}{\partial z_1}$ appears in the first sum, with coefficient $b^{(1)}_{e_k}((e_k,\lambda)-\lambda_1)$, and in the second sum it corresponds only to $j=k+1$ and thus $m=e_k$ so it does not appear by induction hypothesis. By properties of power series, $b^{(1)}_{e_k}=0$.
Therefore, $b^{(1)}_m=0$ if $|m|=1$.
The monomial $z^{e_n+e_n}\dfrac{\partial}{\partial z_1}$ appears only in the first sum by definition of $R$, because in the second it would correspond to $|l|=1$ (since $b^{(1)}_m=0$ if $|m|=1$) thus to $l=e_{j-1}$ by definition of $R$. But this is impossible as $m+e_{j-1}=e_j+2e_n$. Therefore, $b^{(1)}_{e_n +e_n}=0$.
We use this result to prove that $b^{(1)}_{e_{k} +e_n}=0$ by decreasing induction on $k\in \llbracket1,n\rrbracket$. Then we prove that $b^{(1)}_{e_{k} +e_{n-1}}=0$ for every $k\in \llbracket1,n-1\rrbracket$ and eventually $b^{(1)}_{m}=0$ if $|m|=0$.
Said differently, one can prove that $b^{(1)}_{m}=0$ in the "reversed" lexicographic order (ie if one reads from right to left). Then $b^{(s)}_{m}=0$ for every $s \in \llbracket
1,n \rrbracket$ and $m\in \mathbb N^n$.\\
We now prove that $\restr{L_\xi}{\mathcal{g}_\lambda^\perp} : \mathcal{g}_\lambda^\perp \to \mathcal{g}_\lambda^\perp$ is surjective. First assume $\xi$ is equal to the diagonal vector field $\xi_0$.
If $Y=\sum_{\substack{1 \leq s \leq n\\m \in \mathbb N^n\\ \lambda_s\neq (m,\lambda)}}c_m^{(s)} z^m \dfrac{\partial}{\partial z_s}\in \mathcal{g}_\lambda^\perp$, then $\sum_{\substack{1 \leq s \leq n\\m \in \mathbb N^n\\ \lambda_s\neq (m,\lambda)}}\dfrac{c_m^{(s)}}{(m,\lambda)-\lambda_s} z^m \dfrac{\partial}{\partial z_s}$ defines an element of $\mathcal{g}_\lambda^\perp\subset H^0(W,\theta)$ since for every $s \in \llbracket1,n \rrbracket$, there exists $M_s \in \mathbb N$ such that 
$$|m|\geq M_s \implies |(m,\lambda)-\lambda_s|\geq 1$$
because there is only finitely many resonances. Such element $X$ satisfies 
\begin{align*}
    L_{\xi_0}(X)=\sum_{\substack{1 \leq s \leq n\\m \in \mathbb N^n\\ \lambda_s\neq (m,\lambda)}}\dfrac{c_m^{(s)}}{(m,\lambda)-\lambda_s} ((m,\lambda)-\lambda_s) z^m\dfrac{\partial}{\partial z_s}=Y.
\end{align*}
In the more general case where $\xi=\xi_0+\xi_R$, we prove that $\mathcal{g}_\lambda^\perp/L_\xi(\mathcal{g}_\lambda^\perp)=0$ (this makes sense by point $(iv)$ of Proposition \ref{prop:4.5}).
In order to do so, remark that the map $\delta':H^0(W, \theta) \to H^1(W, \theta^\xi)$ is surjective by the long exact sequence associated to \eqref{eq:2} and by the previous point $(iii)$. Therefore :
\begin{align*}
    H^1(W, \theta^\xi)&=\text{Im}(\delta')\cong H^0(W,\theta)/\ker(\delta')=H^0(W,\theta)/\text{Im}(L_\xi)\\
    &=(\mathcal{g}_\lambda\oplus\mathcal{g}_\lambda^\perp)/(L_\xi(\mathcal{g}_\lambda)\oplus L_\xi(\mathcal{g}_\lambda^\perp))\cong(\mathcal{g}_\lambda/L_\xi(\mathcal{g}_\lambda))\times (\mathcal{g}_\lambda^\perp/L_\xi(\mathcal{g}_\lambda^\perp))
\end{align*}
because of the following fact :
\begin{claim}
    Let $E$ and $F$ vector subspaces of a vector space $G$. Let $E'$ (respectively $F'$) a vector subspace of $E$ (respectively $F$). Assume $E\cap F=\{0\}$.\\
    Then the well-defined natural map
    $$\left \{\begin{array}{ccl}
        E/E' \times F/F' & \to   & (E+E')/(F+F')\\
        (\overline{x}, \hat{y}) & \mapsto  & \widetilde{x+y}
    \end{array} \right .$$
    is an isomorphism.
\end{claim}
\noindent Also, because $\mathcal{g}_\lambda$ is a finite dimensional vector space and $\restr{L_\xi}{\mathcal{g}_\lambda^\perp} : \mathcal{g}_\lambda^\perp \to \mathcal{g}_\lambda^\perp$ is injective, it comes $$\mathcal{g}_\lambda/L_\xi(\mathcal{g}_\lambda) \cong\ker(\restr{L_\xi}{\mathcal{g}_\lambda}) = \ker(L_\xi : H^0(W;\theta) \to H^0(W, \theta))=H^0(W, \theta^\xi)$$
and thus
\begin{align*}
    H^1(W, \theta^\xi)\cong H^0(W,\theta^\xi)\times (\mathcal{g}_\lambda^\perp/L_\xi(\mathcal{g}_\lambda^\perp)).
\end{align*}
Now, recall that $H^k(S^{2n-1}, \sigma^{tr}_{\mathcal{F}_0})=H^k(W, \sigma^{tr}_{\mathcal{F}})=0$ for every $k\geq 2$ by point $(iii)$, and consider the beginning of the long exact sequence of cohomology groups associated to \eqref{eq:3}:
\[
\begin{tikzcd}
0 \ar[r] & H^0(W,\sigma^{tr}_\mathcal{F}) \ar[r,"\alpha_0"] & H^0(W,\theta^\xi) \ar[r, "\beta_0"] & H^0(W, \theta^{tr}_\xi) \ar[dll] \\
& H^1(W,\sigma^{tr}_\mathcal{F}) \ar[r,"\alpha_1"] & H^1(W,\theta^\xi) \ar[r,"\beta_1"] & H^1(W, \theta^{tr}_\xi) \ar[r] & 0.
\end{tikzcd}
\]
It comes, for $i\in \{0,1\}$ that 
$$H^i(W, \theta^\xi)/\text{Im}(\alpha_i)=H^i(W, \theta^\xi)/\ker(\beta_i)\cong \text{Im}(\beta_i).$$ 
Also we know that for every $k \in \mathbb N$, $$H^k(W, \sigma^{tr}_\mathcal{F})\cong H^k(S^{2n-1}, \sigma^{tr}_{\mathcal{F}_0}) \quad \text{ and } \quad H^k(W, \theta^{tr}_\mathcal{F})\cong H^k(S^{2n-1}, \theta^{tr}_{\mathcal{F}_0})$$ 
are finite dimensional vector spaces (see \cite{duchamp_deformation_1979} for example). Since $\alpha_i$ is defined on $H^i(W, \sigma^{tr}_\mathcal{F})$ and $\beta_i$ takes values in $H^i(W, \theta^{tr}_\mathcal{F})$, their images are finite dimensional vector spaces. Thus $H^0(W, \theta^\xi)$ and $H^1(W, \theta^\xi)$ are finite dimensional vector spaces, whose dimension satisfy 
$$\sum_{i=0}^1(-1)^{i}\dim(H^i(W, \sigma^{tr}_\mathcal{F}))+\sum_{i=0}^1(-1)^{i}\dim(H^i(W, \theta^\xi))+\sum_{i=0}^1(-1)^{i}\dim(H^i(W, \theta^{tr}_\mathcal{F}))=0$$
by the above exact sequence and the rank-nullity theorem.
We have already discussed the fact that the first term remains constant equal to $0$ after deformation since each $H^i(W, \sigma^{tr}_\mathcal{F})$ is isomorphic to $H^i(S^{2n-1}, \sigma^{tr}_{\mathcal{F}_0})$ and the sheaf $\sigma^{tr}_{\mathcal{F}_0}$ admits a resolution by an elliptic complex. 
This is also true if we replace the letter $\sigma$ by $\theta$ (see \cite{duchamp_deformation_1979}). Therefore, the third term also remains constant after deformation. But we know that, in the case where $\xi=\xi_0$, the map $\restr{L_\xi}{\mathcal{g}_\lambda^\perp}:\mathcal{g}_\lambda^\perp \to \mathcal{g}_\lambda^\perp$ is an isomorphism. 
Therefore since 
$$H^1(W, \theta^\xi)\cong H^0(W,\theta^\xi)\times (\mathcal{g}_\lambda^\perp/L_\xi(\mathcal{g}_\lambda^\perp))\cong H^0(W,\theta^\xi),$$
it comes by the previous equality that the third term remains constant equal to $0$ after deformation. As a result, in the general case where $\xi=\xi_0+\xi_R$, by the previous equality:
$$\dim(H^0(W, \theta^\xi))=\dim(H^1(W, \theta^\xi)).$$
By the above, the latter implies that $\mathcal{g}_\lambda^\perp/L_\xi(\mathcal{g}_\lambda^\perp)$ is a finite dimensional vector space of dimension $0$, which concludes.
\end{proof}
\begin{remark}
    This proof gives also :
    \begin{itemize}
        \item For every $k \geq2$, $$H^k(S^{2n-1}, \sigma^{tr}_{\mathcal{F}_0})=0 \quad \text{ and } \quad H^k(S^{2n-1}, \theta^{tr}_{\mathcal{F}_0})=0;$$
        \item $\dim(H^0(S^{2n-1}, \sigma^{tr}_{\mathcal{F}_0}))=\dim(H^1(S^{2n-1}, \sigma^{tr}_{\mathcal{F}_0}))=1$;
        \item $\dim(H^0(S^{2n-1}, \theta^{tr}_{\mathcal{F}_0}))=\dim(H^1(S^{2n-1}, \theta^{tr}_{\mathcal{F}_0}))$.
    \end{itemize}
\end{remark}
\begin{corollary}
    \leavevmode
    \begin{enumerate}[label=(\roman*)]
        \item The maps $$\delta:H^0(W, \sigma) \to H^1(W, \sigma^{tr}_\mathcal{F}), \; \delta' :H^0(W, \theta) \to H^1(W, \theta^{\xi}), \; p :H^1(W, \theta^\xi) \to H^1(W, \theta^{tr}_\mathcal{F})$$ are surjective ;
        \item The map
        $$\restr{\delta'}{T_0S \oplus \text{Vect}_\mathbb C(\xi)}:T_0S \oplus \text{Vect}_\mathbb C(\xi) \to H^1(W, \theta^\xi)$$
        is an isomorphism.
    \end{enumerate}
\end{corollary}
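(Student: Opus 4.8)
The plan is to read everything off the long exact cohomology sequences attached to the exact sequences of sheaves \eqref{eq:1}, \eqref{eq:2}, \eqref{eq:3}, feeding in the vanishing and splitting facts already established in Proposition~\ref{prop:4.5}, Corollary~\ref{cor:4.5.1}, and the preceding Proposition (its parts $(i)$, $(iii)$, $(v)$).

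For part $(i)$: using exactness of the sequence of \eqref{eq:1}, $\text{Im}\,\delta$ is the kernel of the map $H^1(W,\sigma^{tr}_{\mathcal F})\to H^1(W,\sigma)$, and by exactness the image of this latter map is $\ker\bigl(L_\xi\colon H^1(W,\sigma)\to H^1(W,\sigma)\bigr)$; hence $\delta$ is surjective as soon as this $L_\xi$ is injective. That holds in every dimension: when $n\ge3$ one has $H^1(W,\sigma)=0$ by part $(i)$ of the preceding Proposition, and when $n=2$ one has $H^1(W,\sigma)=H^{n-1}(W,\sigma)$, on which $L_\xi$ is an isomorphism by part $(iii)$. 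The very same reasoning, with $\theta$ in place of $\sigma$ and \eqref{eq:2} in place of \eqref{eq:1}, yields surjectivity of $\delta'$. For $p$, the sequence of \eqref{eq:3} reads $\cdots\to H^1(W,\theta^\xi)\xrightarrow{p}H^1(W,\theta^{tr}_{\mathcal F})\to H^2(W,\sigma^{tr}_{\mathcal F})\to\cdots$, and $H^2(W,\sigma^{tr}_{\mathcal F})=0$ (indeed $H^k(W,\sigma^{tr}_{\mathcal F})=0$ for all $k\ge2$, as recorded right after the preceding Proposition), so $p$ is surjective.

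For part $(ii)$: I would show that $T_0S\oplus\text{Vect}_{\mathbb C}(\xi)$ is a vector-space complement of $\ker\delta'$ in $H^0(W,\theta)$; since $\ker\delta'=L_\xi\bigl(H^0(W,\theta)\bigr)$ by exactness of the sequence of \eqref{eq:2} and $\delta'$ is surjective by part $(i)$, it will follow that $\delta'$ restricts to an isomorphism from that complement onto $H^1(W,\theta^\xi)$. To obtain the complement I would stack three decompositions. First, $H^0(W,\theta)=\mathcal{g}_\lambda\oplus\mathcal{g}_\lambda^\perp$ by Proposition~\ref{prop:4.5}$(v)$. Second, $L_\xi\bigl(H^0(W,\theta)\bigr)=L_\xi(\mathcal{g}_\lambda)\oplus\mathcal{g}_\lambda^\perp$, because $L_\xi$ sends $\mathcal{g}_\lambda$ into itself (Proposition~\ref{prop:4.5}$(iv)$) and maps $\mathcal{g}_\lambda^\perp$ onto itself (part $(v)$ of the preceding Proposition), the sum being direct since $\mathcal{g}_\lambda\cap\mathcal{g}_\lambda^\perp=\{0\}$. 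Third, $\mathcal{g}_\lambda=T_0S\oplus\text{Vect}_{\mathbb C}(\xi)\oplus L_\xi(\mathcal{g}_\lambda)$, since $T_0S$ is by hypothesis in Theorem~\ref{thm:defo2} a complement of $\text{Vect}_{\mathbb C}(\xi)+L_\xi(\mathcal{g}_\lambda)$ in $\mathcal{g}_\lambda$, while Corollary~\ref{cor:4.5.1} forces $\text{Vect}_{\mathbb C}(\xi)\cap L_\xi(\mathcal{g}_\lambda)=\{0\}$. Plugging the third decomposition into the first and regrouping gives $H^0(W,\theta)=\bigl(T_0S\oplus\text{Vect}_{\mathbb C}(\xi)\bigr)\oplus\bigl(L_\xi(\mathcal{g}_\lambda)\oplus\mathcal{g}_\lambda^\perp\bigr)=\bigl(T_0S\oplus\text{Vect}_{\mathbb C}(\xi)\bigr)\oplus L_\xi\bigl(H^0(W,\theta)\bigr)$, which is exactly the complement statement.

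I do not expect a genuinely hard step here: the whole argument is bookkeeping with the cohomology long exact sequences and with the direct-sum decompositions of $\mathcal{g}_\lambda$ and of $H^0(W,\theta)$. The one point that needs care is checking that every sum written above is genuinely direct — in particular that $\text{Vect}_{\mathbb C}(\xi)$ meets neither $L_\xi(\mathcal{g}_\lambda)$ nor, in the relevant quotient, $\mathcal{g}_\lambda^\perp$ — and this is exactly what Corollary~\ref{cor:4.5.1} and the splitting $H^0(W,\theta)=\mathcal{g}_\lambda\oplus\mathcal{g}_\lambda^\perp$ of Proposition~\ref{prop:4.5}$(v)$ provide.
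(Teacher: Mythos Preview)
Your proposal is correct and matches the paper's own argument essentially line for line: part $(i)$ is handled in the paper by the same exactness considerations (injectivity of $L_\xi$ on $H^1$ and vanishing of $H^2(W,\sigma^{tr}_{\mathcal F})$), and for part $(ii)$ the paper first gives an element-chasing version but then explicitly offers, as an alternative, exactly the complement decomposition $H^0(W,\theta)=(T_0S\oplus\text{Vect}_{\mathbb C}(\xi))\oplus\text{Im}(L_\xi)$ that you derive.
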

\begin{proof}
    $(i)$ : We have already discussed the surjectivity of $\delta$ and $\delta'$ during the previous proof. The fact that $p$ is surjective comes from the exact sequence and the previous remark.\\
    $(ii)$ : Let $g \in H^{1}(W, \theta^\xi)$. Since $\delta' :H^0(W, \theta) \to H^1(W, \theta^{\xi})$ is surjective, and because of point $(v)$ the previous proposition and Proposition \ref{prop:4.5}, there exists $X^r\in \mathcal{g}_\lambda $ and $X^{nr} \in \mathcal{g}_\lambda^\perp=L_\xi(\mathcal{g}_\lambda^\perp)$ such that $$g=\delta'(X^r)+\delta'(X^{nr})=\delta'(X^r)$$
    because $\ker(\delta')=\text{Im}(L_\xi : H^0(W,\theta) \to H^0(W,\theta))$ by the long exact sequence. By definition of $S$, $X^r$ writes as $V+a\xi+L_\xi(Y^r)$ where $V\in T_0S, \,a \in \mathbb C, \; Y^r \in \mathcal{g}_\lambda$ so eventually $g=\delta'(V+a\xi)$. This proves the surjectivity.\\
    Assume now that $\delta'(V+a\xi)=0$, where $V\in T_0S, \,a \in \mathbb C$, i.e. there exists $X=X^r +X^{nr} \in \mathcal{g}_\lambda \oplus \mathcal{g}_\lambda^\perp$ such that $V+a\xi=L_\xi(X^r)+L_\xi(X^{nr})$. Necessarily, $L_\xi(X^{nr})=0$. Therefore, by Corollary \ref{cor:4.5.1}, it comes $V=0$ and $a=0$.\\
    We could have also proven that $T_0S\oplus \text{Vect}_\mathbb C(\xi)$ is a complementary subspace to $\ker(\delta')=\text{Im}(L_\xi:H^0(W,\theta)\to H^0(W, \theta))$ since $\delta'$ is surjective. It is immediate by the previous proposition :
    \begin{align*}
        H^0(W,\theta)&=\mathcal{g}_\lambda\oplus\mathcal{g}_\lambda^\perp=\left (T_0S\oplus\text{Vect}_\mathbb C(\xi)\oplus L_\xi(\mathcal{g}_\lambda)\right)\oplus L_\xi(\mathcal{g}_\lambda^\perp)\\
        &=\left (T_0S\oplus\text{Vect}_\mathbb C(\xi)\right)\oplus \left(L_\xi(\mathcal{g}_\lambda)\oplus L_\xi(\mathcal{g}_\lambda^\perp)\right)=\left (T_0S\oplus\text{Vect}_\mathbb C(\xi)\right)\oplus \text{Im}(L_\xi).
    \end{align*}
\end{proof}
We eventually prove that $\restr{p\circ \delta'}{T_0S} : T_0S \to H^1(W, \theta^{tr}_\mathcal{F})$ is an isomorphism. Let $g \in H^{1}(W, \theta^\xi)$. As $p\circ\delta':H^0(W, \theta) \to H^1(W, \theta^{tr}_\mathcal{F})$ is surjective by composition, there exist $V \in T_0S, \; a \in \mathbb C, \; X\in H^0(W, \theta)$ such that
$$g=(p\circ \delta') (V+a\xi+L_\xi(X))=(p\circ \delta') (V)+p(\delta'(a\xi))$$
because $\ker(\delta')=\text{Im}(L_\xi : H^0(W,\theta) \to H^0(W,\theta))$ by the long exact sequence.
Also, by the surjectivity of $\delta : H^0(W, \sigma) \to H^0(W,\sigma)$ and commutativity of the diagram of long exact sequence of cohomology groups, the kernel of $p$ is equal to
\begin{align*}
   \ker(p) &=\text{Im} \left(m_\xi: H^1(W, \sigma^{tr}_\mathcal{F}) \to H^1(W, \theta^\xi)\right)=\text{Im} \left(m_\xi\circ \delta: H^0(W, \sigma) \to H^1(W, \theta^\xi)\right)\\
    &=\text{Im} \left(\delta'\circ m_\xi: H^0(W, \sigma) \to H^1(W, \theta^\xi)\right)
\end{align*}
which therefore gives $g=(p\circ \delta') (V)$ and thus the surjectivity of the map $\restr{p\circ \delta'}{T_0S} : T_0S \to H^1(W, \theta^{tr}_\mathcal{F})$.\\
Now assume $V\in T_0S$ satisfies $p(\delta'(V))=0$, i.e. $\delta'(V)$ is equal to some $\delta'(m_\xi(f))$ where $f \in H^0(W, \sigma)$ by the above. Write $f=a +h$ where $a\in \mathbb C$ is the constant term of $f$ and $h\in H^0(W, \sigma)$ is a holomorphic function on $\mathbb C^n$ vanishing at $0$.
Then \begin{align*}
    \delta'(V)&=\delta'(a\xi +h \xi)=\delta'(a\xi)+\delta'(m_\xi(h))\\
    &=\delta'(a\xi)+m_\xi(\delta(h))
\end{align*}
but we also know that $\ker(\delta)=\text{Im}(L_\xi : H^0(W,\sigma) \to H^0(W,\sigma))$ coincides with the set of holomorphic functions on $\mathbb C^n$ vanishing at $0$, by point $(iv)$ of the previous proposition.
Therefore $\delta'(V)=\delta'(a\xi)$ which implies that $V-a\xi\in \ker(\delta')=\text{Im}(L_\xi : H^0(W,\theta) \to H^0(W,\theta))$ and thus, by Corollary \ref{cor:4.5.1} and definition of $S$, $V=0$ and $a=0$.
This proves injectivity of the map $\restr{p\circ \delta'}{T_0S} : T_0S \to H^1(W, \theta^{tr}_\mathcal{F})$ and the result.
\bibliographystyle{abbrv}
\bibliography{article_2.bib}
\end{document}